\documentclass[12pt,a4paper]{article}
\usepackage{caption}
\usepackage{subcaption}
 \usepackage{amssymb,amsmath,amsfonts,amsthm}
  \usepackage{graphics,graphicx,color,transparent}
 \setlength{\parskip}{2mm}
\addtolength{\oddsidemargin}{-2.0cm}
\addtolength{\evensidemargin}{-1.5cm}
\addtolength{\topmargin}{-2.0cm} \addtolength{\textwidth}{4.2 cm}
\addtolength{\textheight}{4.2cm} \setlength{\parindent}{0mm}
\setlength{\parskip}{3mm}

 \advance\hoffset by 5mm

\numberwithin{equation}{section}

\newcommand{\R}{{\mathbb R}}
\newcommand{\mcU}{{\mathcal U}}
\newcommand{\Sph}{{\mathbb S}}
\newcommand{\mcC}{{\mathcal C}}
\newcommand{\mcS}{{\mathcal S}}
\newcommand{\mcG}{{\mathbb G}}
\newcommand{\To}{{\mathbb T}^2}
\newcommand{\dist}{{\rm dist}\,}
\newcommand{\defeq}{\stackrel{\rm{def}}{=}}

\newcommand{\eps}{\varepsilon}
\newcommand{\om}{\Omega}
\newcommand{\supp}{{\rm supp}\,}

\def\be{\begin{equation}}
\def\ee{\end{equation}}

\newcounter{marnote}

 \title{Partial regularity and smooth topology-preserving approximations of
rough domains}
 \date{\today}
\author{John M. Ball\thanks{Oxford Centre for Nonlinear PDE, Mathematical Institute, University of Oxford, Andrew Wiles Building, Radcliffe Observatory Quarter, Woodstock Road, Oxford OX2 6GG, U.K. }\,  and Arghir Zarnescu\thanks{ IKERBASQUE, Basque Foundation for Science, Maria Diaz de Haro 3,
48013, Bilbao, Bizkaia, Spain}\,\,\thanks{BCAM, Basque Center for Applied Mathematics, Mazarredo 14, E48009 Bilbao, Basque Country, Spain}\,\,\thanks{ ``Simion Stoilow" Institute of the Romanian Academy, 21 Calea Grivi\c{t}ei Street,  010702 Bucharest, Romania}}
\newtheorem{definition}{Definition}[section]
\newtheorem{remark}{Remark}[section]
 \newtheorem{lemma} {Lemma}[section]
 \newtheorem{proposition}{Proposition}[section]
 \newtheorem{theorem} {Theorem}[section]
 
\newtheorem{example}{Example}[section]

 \begin{document}
\maketitle
\centerline {{\it In memoriam} J. Bryce McLeod}
\begin{abstract}
\noindent For a bounded domain $\Omega\subset\R^m, m\geq 2,$   of class $C^0$,   the properties are studied of fields of `good directions', that is the directions  with respect to which $\partial\Omega$ can be locally represented   as the graph of a continuous function. For any such domain there is a canonical smooth field of good directions defined in a suitable neighbourhood of $\partial\Omega$, in terms of which   a corresponding flow can be defined. Using this flow it is shown that $\Omega$ can be approximated from the inside and the outside by diffeomorphic domains of class $C^\infty$. Whether or not the image of a  general continuous field of good directions (pseudonormals) defined on $\partial\Omega$ is the whole of $\mathbb{S}^{m-1}$ is shown to depend on  the topology of $\Omega$. These considerations are  used to  prove that if $m=2,3$, or if $\Omega$ has nonzero Euler characteristic, there is a  point $P\in\partial\Omega$  in the neighbourhood of which $\partial\Omega$ is Lipschitz. The results provide new information even for more regular domains, with Lipschitz or smooth boundaries.
\end{abstract}
 \section{Introduction}  
In this paper we study bounded domains $\Omega\subset\R^m$, $m>1$, of class $C^0$, showing that they can be approximated from the inside and outside by bounded domains $\Omega_\eps$ of class $C^\infty$ that are diffeomorphic to $\Omega$, and such that $\bar\Omega_\eps$ are homeomorphic to $\bar\Omega$. Thus the approximating smooth domains preserve topological properties of the rough domain; in particular, for instance, a simply-connected bounded domain of class $C^0$ can be approximated from the inside and outside by smooth simply-connected domains.  The method of approximation uses a construction of smooth fields of `good directions', that is  directions with respect to which the boundary $\partial\Omega$ can be locally represented as the graph of a continuous function, and a corresponding flow. We analyze topological properties of fields of good directions, and exploit them to  study partial regularity of the boundary of such domains.

 Domains of class $C^0$ represent  one of the largest class of domains relevant to the analysis of partial differential equations and their numerical approximation, and have been widely studied,
see for  instance \cite{necas-paper,necas-book,Grisvard,mazya}.  They are in particular relevant for  physical applications to  non-Lipschitz domains arising  in optimal design, free-boundary problems (e.g. for two-phase flow) and fracture (see for instance \cite{coutand, shape, friedman,hadzic,pironneau}). Such rough domains  also appear in the theoretical study of parabolic   equations through the use of H\"older-continuous space-time domains  \cite{dindos,lieberman-book}  or  elliptic problems in domains with point singularities \cite{mazya2}.
Other applications include problems in which the domain is an unknown (such as optimal design), for which the flow of good directions might be used to construct domain variations, and various problems in partial differential equations \cite{Henry} and potential theory \cite{Staubach}. Nevertheless there is a 
 a lack of tools for treating general domains of class $C^0$ and they are often treated on a case-by-case basis, under various additional simplifying assumptions (see for instance \cite{mazya2,mazya3}). 
One aim of our study is to provide a set of versatile tools for dealing with such domains, without the need of {\it ad hoc} methods adjusted to specific cases.

We recall that $\Omega\subset\R^m$, $m>1$ is {\it a domain of class $C^0$} (respectively {\it of class $C^r$}, $r=1,2,\ldots,\infty$, Lipschitz) if $\Omega$ is a connected open set such that for any
point $P$ belonging to the boundary $\partial\Omega$ there exist a $\delta>0$ and an
orthonormal coordinate system $Y\stackrel{\rm def}{=}(y',y_m)=(y_1,y_2,\dots,y_m)$  with origin at $P$,
together with a continuous (respectively $C^r$, Lipschitz) function
$f:\R^{m-1}\to\R$, such that
\be
\Omega\cap B(P,\delta)=\{y\in\R^m:\, y_m>f(y'), |y|<\delta\}
\label{u}
\ee from which it follows that $\partial\Omega\cap B(P,\delta)=\{y\in\R^m: y_m=f(y'), |y|<\delta\}$ and $f(0)=0$.
We call the unit vector $n(P)=e_m(P)$ in the $y_m-$direction for this coordinate system a {\it pseudonormal} at $P$. More generally, if $P\in\R^m$ is not necessarily a boundary point, but is such that for the coordinate system $Y$ with origin at $P$ we have that \eqref{u} holds with $\partial\Omega\cap B(P,\delta)$ nonempty, we call the corresponding unit vector $n(P)$ {\it a good direction at} $P$.
We show (Lemma \ref{geoconv}) that the set of good directions at a point $P$ is a (geodesically) convex subset of the unit sphere $\Sph^{m-1}$. Using a partition of unity we deduce (Proposition \ref{smapp}) that for a bounded domain of class $C^0$ there exists a field of good directions $G(P)$,  that we call {\it canonical}, depending {\it smoothly} on $P$.

Although good directions and pseudonormals are defined locally, their properties depend on the topology of  the domain. If $\Omega$ is a bounded domain of class $C^1$ then the negative Gauss map $\nu_{\partial\Omega}:\partial\Omega\rightarrow \Sph^{m-1}$, defined by $\nu_{\partial\Omega}(P)=$ the inward normal to $\partial\Omega$ at $P$, is surjective. We show (Theorem \ref{prop:surjectivity}) that the same is true for an arbitrary continuous field of pseudonormals if $m=2$ or if $m\geq 3$ and $\Omega$ has nonzero Euler characteristic. However if $\Omega\subset\R^3$ is a standard solid torus in $\R^3$ then, using an observation of Lackenby \cite{lackenby}, we show (Proposition \ref{prop:goodtorus}) 
that there is a continuous field of pseudonormals with  image contained in an arbitrarily small neighbourhood of the great circle in $S^2$ perpendicular to the axis of cylindrical symmetry of the torus.

In our approximation result (Theorem \ref{homapprox}) the approximating domains are given by
\be\label{intro1}
\Omega_\eps\stackrel{\rm def}{=}\{x\in\R^m:\rho(x)>\eps\}, \;\; 0<|\eps|<\eps_0,
\ee
where $\rho$ is   a regularized signed distance to the boundary. In general these domains would provide, for suitably small $\varepsilon_0$ and almost any $\varepsilon\in (-\varepsilon_0,\varepsilon_0)$, an interior and exterior approximation for an {\it arbitrary} open set $\Omega$ (see Remark~\ref{remark:arbitrary omega}). However if one requires that the approximating domains are in the same diffeomorphism (or homeomorphism up to the boundary) class as the initial domain one needs to impose some restrictions on $\Omega$ (see Examples~\ref{exteriorex}, \ref{interiorex}). The bounded domains of class $C^0$ form a large class of domains for which this type of approximation is possible.(In fact Theorem \ref{homapprox} holds for trivially for the larger class of domains that are the image of a bounded domain of class $C^0$ under a suitable diffeomorphism; see Remark \ref{genapprox}.)

In our paper we use the regularized signed distance to the boundary defined by Lieberman \cite{lieberman}. This regularized distance has the property that $\nabla\rho(P)\cdot G(P)>0$ for a canonical good direction $G(P)$ in a suitable neighbourhood of $\partial\Omega$. This enables us to use the {\it flow of canonical good directions} $S(t)x_0$ defined as the solution of 
$$
  \begin{array}{ll}
          \dot x(t)=G(x(t))&\textrm{ for $t\in\R$},\\
          x(0)=x_0,
          \end{array}  
$$
suitably extended so as to be globally defined, to provide the deformation showing that  $\Omega$ and $\Omega_\eps$ are $C^\infty$-diffeomorphic and their closures  are homeomorphic.

A  surprising by-product of our study of the topology of the set $\Omega$ and that of the good directions is  the fact that in $\R^m$ for $m=2,3$   any bounded domain of class $C^0$ must necessarily have portions of the boundary with better regularity, namely Lipschitz regularity (Theorems \ref{partial2Da}, \ref{partial3D}). This is true for arbitrary $m$ if $\Omega$ has nonzero Euler characteristic (Theorem \ref{thm:zeroEulerchar}). However it is  in general false for  {\it unbounded} domains of class $C^0$ (Remark \ref{unbounded}).

\medskip
Let us now mention some related literature. Further results on domains of class $C^0$ and their properties,   in particular their geometric characterization as domains with the segment property\footnote{i.e. for each point $P\in\partial\Omega$ there exists a neighbourhood $U(P)$ in $\R^m$ and a non-zero vector $b(P)\in\R^m$ such that $x+tb\in\Omega,\mbox{ for all } x\in\overline{\Omega}\cap U(P), 0<t<1$.},  are available in Fraenkel  \cite{fraenkelc0}. The relation between domains of class $C^0$ and their closure is addressed in Grisvard \cite[Theorem 1.2.1.5]{Grisvard}. Also in \cite[Corollary 1.2.2.3]{Grisvard},    it is noted that a bounded open convex set necessarily has Lipschitz boundary.
Somewhat related domains are the `cloudy manifolds' defined by Kleiner \& Lott  \cite{KleinerLott} which are subsets of an Euclidean space with the property that near each point they look ``coarsely close'' to an affine subspace of the Euclidean space. It was shown in \cite{KleinerLott} that any cloudy $k$-manifold can be
well interpolated by a smooth $k$-dimensional submanifold of the Euclidean space. 
Another strand of research that can be compared with our partial regularity result for bounded domains of class $C^0$ is that described by Jones, Katz \& Vargas \cite{jones}, in which the authors prove and generalize a conjecture of Semmes \cite{semmes}   that for a bounded open set   $\Omega\subset\R^m$ with $\mathcal H^{m-1}(\partial\Omega)=M<\infty$ there exist $\varepsilon>0$ and a Lipschitz graph $\Gamma$ with $\mathcal H^{m-1}(\Gamma\cap\partial\Omega)\geq\varepsilon$.

  Close in spirit to our work is that of Verchota (\cite[Appendix]{verchota-thesis}, \cite[Proposition 1.12]{verchota-JFA}), who studies bounded Lipschitz domains $\Omega$ and constructs  smooth approximating domains whose boundaries are shown to be homeomorphic with $\partial\Omega$ using a smooth flow.  

In Hofmann, Mitrea \& Taylor \cite{hofmann} a definition is given of a continuous vector field transversal to the boundary of an open set with locally finite perimeter, in which it is required that the inner product of the vector field with the normal is bounded away from zero. For the case of   bounded domains of class $C^0$ (which need not have finite perimeter) this is a similar but stronger requirement than being a continuous field of good directions. A result  \cite[Proposition 2.2] {hofmann} analogous to our Proposition \ref{smapp} is then proved giving conditions under which the existence of a continuous  locally tranversal field implies the existence of a global smooth transversal field.

  The closest to our work seems to be that  of Iwaniec \& Onninen \cite{iwaniec}. There they note, as we do in Proposition~\ref{lemma:nonzero}, that the distance is a monotone function along a good direction, and they use a type of canonical field of good directions and its flow to construct approximating domains similarly to our construction in Section~\ref{section:flow}.  

Finally, in a more general framework related types of questions were addressed in the papers of Cairns \cite{cairns}, Whitehead \cite{whitehead} and Pugh  \cite{Pugh}, which study conditions under which a  topological manifold can be smoothed: that is when  its (maximal) topological atlas contains a smooth subatlas. Indeed, it seems likely that the techniques of smoothing of manifolds could lead to inner and outer approximations by domains of class $C^\infty$ for the wider class of bounded domains $\Omega\subset\R^m$, $m\neq 4,5$, having boundaries that are locally flat (that is, $\bar\Omega$ and $\Omega^c$ are topological manifolds with boundary), though without the same kind of semi-explicit representation as in Theorem \ref{homapprox} of the approximating domains in terms of a regularized distance. (A brief comparison between such domains and those of class $C^0$  can be found in \cite{Grisvard}, p.$5-10$.) This is discussed in more detail in Remark \ref{topman1}.

Although the questions addressed in this paper seem natural, we are not aware of any other work that studies the properties of good directions and their spatial variation, or of related approximation results preserving topological properties.  We were motivated to consider these questions because in an analysis \cite{ballzarnescu} of orientability for liquid crystals we wanted to use a result of Pakzad \& Rivi\`ere \cite{pakzadriviere}, which assumed $\Omega$ to be of class $C^\infty$ and simply-connected, for more general simply-connected domains (though recent work of Bedford \cite{bedford} gives a way of proving the desired orientability without approximation of the domain).

\section{Good directions}
\label{good}

\par

\smallskip\begin{definition}\label{gooddir} Let $\Omega\subset\R^m$ be a domain of class $C^0$.
For a point $P\in\R^m$ we define a {\rm good direction at $P$, with respect to a ball $B(P,\delta)$}, $\delta>0$, with $B(P,\delta)\cap\partial\Omega\not=\emptyset$ to  be a vector $n\in\Sph^{m-1}$ such that  there is an
orthonormal coordinate system $Y=(y',y_m)=(y_1,y_2,\dots,y_m)$  with origin at  the point $P$ and such that $n=e_m$
is the unit vector in the $y_m$ direction,  together with a continuous function
$f:\R^{m-1}\to\R$ (depending on $P$ and $n$ and $\delta$), such that
\be
\Omega\cap B(P,\delta)=\{y\in\R^m:\,y_m>f(y'),\,|y|<\delta\}.
\label{u+}
\ee
 \par We say that $n$ is a {\rm good direction at $P$} if it is a good direction with respect to some ball $B(P,\delta)$ with $B(P,\delta)\cap\partial\Omega\not=0$.
\par If $P\in\partial\Omega$ then a good direction $n$  at $P$ is called a {\rm pseudonormal} at  $P$.

\end{definition}

\begin{remark} {\rm A good direction at a point need not be
unique but for a bounded domain of class $C^0$ there always exists at least one  for each point in a  (small enough)  neighbourhood of
$\partial\Omega$. Note also that if both $n$ and $\bar n$ are good directions at $P$ then we can choose $\delta=\min\{\delta(P,n),\delta(P,\bar n)\}$ so that the corresponding two representations \eqref{u+} hold for $\delta$. However a possible choice of $\delta(P,n)$ may not be a possible choice of $\delta(P,\bar n)$.}
\end{remark}

\begin{remark}\label{cusp} {\rm If one has for instance a domain in $\R^2$ such that part of its boundary can be locally represented as $\{(x,f(x)), x\in (-1,1)\}$ with $f(x)=\sqrt{|x|}$ then there is only one good direction at the point $(0,0)$ namely $(0,1)\in\Sph^1$. This suggests that there exists a connection between the uniqueness of a good direction and the regularity of the boundary, and this topic will be explored in detail in the last section.}
\end{remark}

 We refer to a  subset $S$ of a Riemannian manifold $M$ as {\it geodesically convex} if given any two points $P,Q\in S$ there is a unique shortest curve (minimal geodesic) in $M$ joining $P$ and $Q$, and this curve lies in $S$ (there are several differing definitions in the literature). The following lemma asserts that the set of good directions at any given point is a geodesically convex subset of $\mathbb{S}^{m-1}$.  For a closely related result see Wilson \cite[Lemma 4.2]{wesleywilson}.
\begin{lemma} 
\label{geoconv}
Let $\Omega\subset\R^m$ be a $($possibly unbounded$)$ domain of class $C^0$. If $p,q\in\Sph^{m-1}$ are  good directions at a point
$P\in\R^m$ with respect to the ball $B(P,\delta)$ then $p\ne -q$ and for any
$\lambda\in (0,1)$ the vector
$\frac{\lambda p+(1-\lambda) q}{|\lambda p+(1-\lambda)q|}$ is a
good direction at $P$ with respect to the ball $B(P,\delta)$. \label{lemma:two}
\end{lemma}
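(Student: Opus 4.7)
My plan is to verify $p\ne -q$ by a one-line consistency check and then build the continuous graph function for the direction $r:=(\lambda p+(1-\lambda)q)/|\lambda p+(1-\lambda)q|$ by reducing everything to the statement that each straight line parallel to $r$ meets $\partial\Omega\cap B(P,\delta)$ in at most one point. I will write $r=\alpha p+\beta q$ with $\alpha,\beta>0$ (so $\alpha=\lambda/N$, $\beta=(1-\lambda)/N$, $N=|\lambda p+(1-\lambda)q|$), install orthonormal $r$-coordinates with origin at $P$ whose last axis is $r$, and write points as $(z',z_m)$. For the non-antipodality, pick any $Q\in\partial\Omega\cap B(P,\delta)$: in the $p$-coordinates $Q-tp=(y'_Q,f_p(y'_Q)-t)$ is strictly below the $p$-graph and so lies in $(\bar\Omega)^c$ for small $t>0$, but if $q=-p$ this same point equals $Q+tq$, which the $q$-graph description places strictly above the $q$-graph and hence in $\Omega$ -- a contradiction.

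The main step is a zigzag argument. Suppose $Q_1,Q_2\in\partial\Omega\cap B(P,\delta)$ satisfy $Q_2-Q_1=tr$ for some $t>0$; then $Q_2-Q_1=(t\alpha)p+(t\beta)q$. I connect $Q_1$ to $Q_2$ by a polygonal path $\gamma_n$ consisting of $n$ alternating $+p$-steps of length $t\alpha/n$ and $+q$-steps of length $t\beta/n$. Every corner of $\gamma_n$ lies within $O(1/n)$ of the straight segment $[Q_1,Q_2]$, which sits in $B(P,\delta)$ by convexity of the ball, so for $n$ large the whole path $\gamma_n$ is contained in $B(P,\delta)$. The first $+p$-step takes $Q_1\in\partial\Omega$ to a point strictly above the $p$-graph and hence into $\Omega$; every subsequent $+p$-step (resp.\ $+q$-step) from a point $R\in\Omega\cap B(P,\delta)$ preserves the strict inequality $y_m>f_p(y')$ (resp.\ $z_m>f_q(z')$) and so stays in $\Omega$. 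Therefore $Q_2\in\Omega$, contradicting $Q_2\in\partial\Omega$. A single small-$t$ version of the same zigzag shows that for each $Q\in\partial\Omega\cap B(P,\delta)$ and $t>0$ small one has $Q+tr\in\Omega$ while $Q-tr\in(\bar\Omega)^c$.

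Combining these facts, along each line $L_{z'}=\{z'+sr:s\in\R\}$ meeting $B(P,\delta)$ there is at most one boundary point, above which sits $\Omega$ and below $(\bar\Omega)^c$; by connectedness, when there is no boundary point, $L_{z'}\cap B(P,\delta)$ lies entirely in $\Omega$ or entirely in $(\bar\Omega)^c$. Hence $\partial\Omega\cap B(P,\delta)$ is a graph in direction $r$ over an open set $U\subset r^\perp$, and the corresponding graph function $g$ is continuous on $U$ by invariance of domain applied to the injective continuous projection of the topological $(m-1)$-manifold $\partial\Omega\cap B(P,\delta)$ into $r^\perp$; any continuous extension of $g$ to $\R^{m-1}$ that lies on the correct side of each non-crossing $r$-line slice of $B(P,\delta)$ supplies the continuous function required by Definition \ref{gooddir}. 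The principal difficulty is making the zigzag work: one must keep the polygonal path inside $B(P,\delta)$ (via convexity of the ball and the uniform convergence $\gamma_n\to[Q_1,Q_2]$) while arranging that each leg stays strictly above the appropriate graph so that the path cannot re-enter $\partial\Omega$; everything else -- non-antipodality, the above/below orientation along $r$-lines, and the extension-plus-continuity of the graph function -- is routine once this is in place.
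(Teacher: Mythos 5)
Your zigzag is exactly the engine of the paper's own proof: the paper runs the same induction downward from a boundary point, alternating steps $-\frac{h}{k}\cdot\frac{(1-\lambda)q}{|\lambda p+(1-\lambda)q|}$ and $-\frac{h}{k}\cdot\frac{\lambda p}{|\lambda p+(1-\lambda)q|}$ while staying at distance $>\eps$ from $\partial B$, whereas you run it upward between two putative boundary points, but the content is identical. Your non-antipodality check and the resulting single-crossing, $\Omega$-above/$(\bar\Omega)^c$-below structure of the $r$-lines are correct. Replacing the paper's elementary sequential argument for continuity of $g$ on the open set $U$ by invariance of domain applied to the injective projection of the manifold $\partial\Omega\cap B(P,\delta)$ onto $r^\perp$ is legitimate, though heavier than what is needed (the paper gets the same conclusion from the two open balls sitting just above and below each graph point).

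The gap is in your final step: the existence of ``any continuous extension of $g$ to $\R^{m-1}$ that lies on the correct side of each non-crossing $r$-line slice'' is not automatic, and establishing it is a substantive part of the paper's proof rather than a routine afterthought. Write $B_{m-1}(0,\delta)=U\cup A_\Omega\cup A_c$, where $A_\Omega$ (resp.\ $A_c$) consists of those $z'$ whose line lies entirely in $\Omega\cap B$ (resp.\ in $B\setminus\bar\Omega$). The side condition forces any admissible extension $\tilde g$ to satisfy $\tilde g(z')\le-\sqrt{\delta^2-|z'|^2}$ on $A_\Omega$ and $\tilde g(z')\ge\sqrt{\delta^2-|z'|^2}$ on $A_c$, while on $U$ one has $|g(z')|<\sqrt{\delta^2-|z'|^2}$. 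So continuity at a point $z'\in A_\Omega\cap\bar U$ requires $g(z'^{(j)})\to-\sqrt{\delta^2-|z'|^2}$ for every sequence $z'^{(j)}\to z'$ in $U$, i.e.\ the crossing points on nearby lines must sink to the lower rim of the ball; such interface points genuinely occur (e.g.\ when $\partial\Omega$ is a hyperplane that exits $B(P,\delta)$ through its side), and if this convergence failed there would be \emph{no} admissible continuous extension, so your ``any continuous extension\dots'' quantifies over a possibly empty set. This is precisely what the paper handles with the explicit formula \eqref{fdef} (values $\pm\sqrt{\delta^2-|\xi'|^2}$ on the non-crossing lines and $0$ on $|\xi'|=\delta$) and the sequential continuity arguments in the second half of its proof. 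The missing verification can be carried out with the same open-ball observation that powers your small-$t$ zigzag, so the fix is short, but it must be made explicit.
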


\begin{proof}
Let $p,q$ be good directions at $P$, and $0<\lambda<1$. We can assume that $P=0$. Let $B\stackrel{\rm def}{=}B(0,\delta)$. If $p=q$ there is nothing to prove. So assume $p\neq q$. Then Definition \ref{gooddir} implies that $p\neq -q$, and so $\lambda p+(1-\lambda)q\neq 0$. Let $N=\frac{\lambda p+(1-\lambda)q}{|\lambda p+(1-\lambda)q|}$. We choose coordinates such that $e_m=N$. Take any $\xi\in B\backslash\Omega$. Then since $p$ and $q$ are good directions the intersections of $B$ with the open half-lines $\{\xi+tp:t<0\}$ and $\{\xi+tq:t<0\}$ lie in $\R^m\backslash\overline\Omega$. Similarly, if $\xi\in B\cap\overline\Omega$ the intersections of $B$ with the open half-lines $\{\xi+tp:t>0\}$ and $\{\xi+tq:t>0\}$  lie in $\Omega$.

\begin{figure}[h] \centering \def\svgwidth{300pt} \input{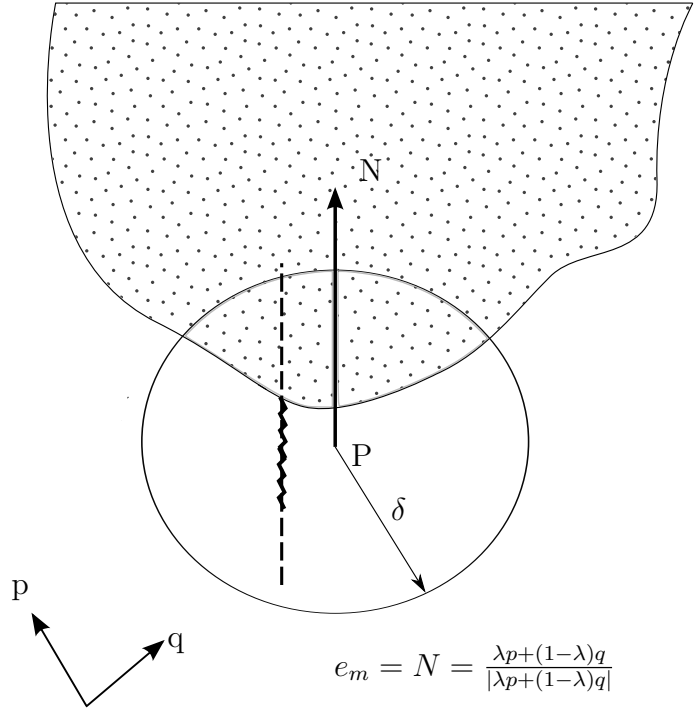}
\caption{Geodesically convex combination of two good directions}
\end{figure}

Given any $\xi'\in B_{m-1}(0,\delta)=\{z\in\R^{m-1}:|z|<\delta\}$ define the line $L(\xi')=\{(\xi',t):t\in\R\}$, and let $S=\{\xi'\in B_{m-1}(0,\delta): L(\xi')\cap \partial\Omega\cap B\mbox{ is nonempty}\}$. We claim that if $\xi'\in S$ then $L(\xi')$ intersects $B\cap\partial\Omega$ in a unique  point $(\xi',t(\xi'))$, and that $\{(\xi',t):t>t(\xi')\}\cap B\subset\Omega$ and $\{(\xi',t):t<t(\xi')\}\cap B\subset\R^m\backslash\overline\Omega$. To prove the claim let $\xi=(\xi',t(\xi'))\in B\cap\partial\Omega$, and suppose $\xi-he_m\in B$ for some $h>0$. Then for some $\eps>0$, $\dist(\xi,\partial B)>\eps, \dist (\xi-he_m,\partial B)>\eps$. Choose a positive integer $k>\frac{h}{\eps |\lambda p+(1-\lambda)q|}$, and divide the interval $(0,h)$ into $k$  subintervals of length $h/k$. Then $\bar \xi=\xi-\frac{h}{k}\cdot\frac{(1-\lambda)q}{|\lambda p+(1-\lambda)q|}\in B$, and so $\bar \xi\in\R^m\backslash\overline\Omega$.   Thus $\xi-\frac{h}{k}e_m=\bar \xi-\frac{h}{k}\cdot\frac{\lambda p}{|\lambda p+(1-\lambda)q|}\in\R^m\backslash\overline\Omega$ and $\dist(\xi-\frac{h}{k}e_m,\partial\Omega)>\eps$. Proceeding inductively, after $k$ steps we find that $\xi-he_m\in\R^m\backslash\overline\Omega$, so that  $\{(\xi',t):t<t(\xi')\}\cap B\subset\R^m\backslash\overline\Omega$. It follows similarly that $\{(\xi',t):t>t(\xi')\}\cap B\subset\Omega$, establishing the claim.

  Now define
\begin{eqnarray}\label{fdef}
f(\xi')=\left\{\begin{array}{ll} 0&\mbox{if }|\xi'|=\delta\\ t(\xi')&\mbox{if } \xi'\in S\\
-\sqrt{\delta^2-|\xi'|^2}&\mbox{if }L(\xi')\cap B=L(\xi')\cap B\cap\Omega\\
\sqrt{\delta^2-|\xi'|^2}&\mbox{if }L(\xi')\cap B=L(\xi')\cap B\cap(\R^m\backslash\overline\Omega).
\end{array}\right.\end{eqnarray}

Note that $(\xi',f(\xi'))\in\bar B$ for all $\xi'\in B_{m-1}(0,\delta)$. Clearly $\Omega\cap B=\{(\xi',\xi_m)\in B:|\xi'|<\delta, \xi_m>f(\xi')\}$, and it remains to prove that $f$ is continuous, since then we can extend $f$ by zero for $|\xi'|>\delta$ to get a suitable continuous $f:\R^{m-1}\rightarrow \R$.
Let $\xi'^{(j)}\rightarrow \xi'$ in $\overline{B_{m-1}(0,\delta)}$. If $|\xi'|=\delta$ then $|(\xi'^{(j)},f(\xi'^{(j)}))|\leq\delta$ implies $|\xi'^{(j)}|^2+f^2(\xi'^{(j)})\leq\delta^2$ and so $f(\xi'^{(j)})\rightarrow 0=f(\xi')$.
If $\xi'\in S$ then $(\xi',f(\xi'))\in B\cap\partial\Omega$ and so for any sufficiently small $\eps>0$ the points $x_\eps^+=(\xi',f(\xi')+\eps)$ and $x_\eps^-=(\xi',f(\xi')-\eps)$ belong to $\Omega\cap B$ and to $(\R^m\backslash\overline\Omega)\cap B$ respectively. Since $\Omega$ and $\R^m\backslash\overline\Omega$ are open, there exists $\delta\in(0,\eps)$ such that $B(x_\eps^+,\delta)\subset \Omega\cap B$ and $B(x_\eps^-,\delta)\subset(\R^m\backslash \overline\Omega)\cap B$. Hence for sufficiently large $j$, the line $L(\xi'^{(j)})$ has points in both $B(x_\eps^+,\delta)$ and $B(x_\eps^-,\delta)$ and thus intersects $\partial\Omega$ in $B$ at the unique point $(\xi'^{(j)},f(\xi'^{(j)}))$, where $|f(\xi'^{(j)})-f(\xi')|<\eps$. Since $\eps$ is arbitrarily small, $f(\xi'^{(j)})\rightarrow f(\xi')$. Similarly, if $L(\xi')\cap B=L(\xi')\cap\Omega$ so that $f(\xi')=-\sqrt{\delta^2-|\xi'|^2}$, then for all sufficiently small $\eps>0$ there is a $\delta\in (0,\eps)$ such that the ball $B((\xi',f(\xi')+\eps), \delta)\subset\Omega\cap B$. Hence for all sufficiently large $j$ we have $f(\xi'^{(j)})\leq f(\xi')+\eps$, so that $f(\xi'^{(j)})\rightarrow f(\xi')$. The case when $L(\xi')\cap B=L(\xi')\cap B\cap(\R^m\backslash\overline\Omega)$ is handled in a similar way.
\end{proof}

\smallskip The above can be easily extended to the case
of an arbitrary number of  good directions:

\begin{lemma}\label{ggd}Let $k=1,2,\dots$ .  If $n_1,n_2,\dots, n_k\in\Sph^{m-1}$ are good
directions at a point
$P$ with respect to the ball $B(P,\delta)$  and
$0<\lambda_i<1,i=1,2,\dots,k$, with
$\Sigma_{i=1}^k\lambda_i=1$ then $\Sigma_{i=1}^k
\lambda_i n_i\neq 0$ and $\frac{\Sigma_{i=1}^k
\lambda_i n_i}{|\Sigma_{i=1}^k\lambda_i n_i|}$ is a good direction at $P$ with respect to $B(P,\delta)$.
 \label{lemma:multiple}
\end{lemma}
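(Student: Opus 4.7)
The natural approach is induction on $k$, using Lemma~\ref{geoconv} both as the base case and as the inductive tool. The case $k=1$ is vacuous, since the constraints $0<\lambda_1<1$ and $\lambda_1=1$ are incompatible, and the case $k=2$ is exactly Lemma~\ref{geoconv}.

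For the inductive step, suppose the result holds for $k-1$, where $k\geq 3$. Given $n_1,\dots,n_k$ and weights $\lambda_i\in(0,1)$ summing to $1$, set $\mu\defeq 1-\lambda_k\in(0,1)$ and $\mu_i\defeq\lambda_i/\mu$ for $i=1,\dots,k-1$. Then $\sum_{i=1}^{k-1}\mu_i=1$, and moreover each $\mu_i$ lies strictly in $(0,1)$: since $k\geq 3$, for every $i\leq k-1$ there is some other index $j\leq k-1$, $j\ne i$, with $\mu_j>0$, forcing $\mu_i<1$. By the inductive hypothesis applied to $n_1,\dots,n_{k-1}$ with the weights $\mu_1,\dots,\mu_{k-1}$, the vector $M\defeq\sum_{i=1}^{k-1}\mu_i n_i$ is nonzero, and $\tilde M\defeq M/|M|$ is a good direction at $P$ with respect to the \emph{same} ball $B(P,\delta)$.

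The key algebraic observation is that $\sum_{i=1}^k\lambda_i n_i=\mu|M|\,\tilde M+\lambda_k n_k$, with both coefficients strictly positive. Setting $\alpha\defeq\mu|M|/(\mu|M|+\lambda_k)\in(0,1)$, the convex combination $\alpha\tilde M+(1-\alpha)n_k$ is a positive scalar multiple of $\sum_{i=1}^k\lambda_i n_i$. Applying Lemma~\ref{geoconv} with parameter $\alpha$ to the good directions $\tilde M$ and $n_k$ at $P$ with respect to $B(P,\delta)$ shows that this combination is nonzero and that its normalization is again a good direction at $P$ with respect to $B(P,\delta)$. Since that normalization coincides with $\sum_{i=1}^k\lambda_i n_i/|\sum_{i=1}^k\lambda_i n_i|$, the induction closes.

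No genuine analytic difficulty arises, because the inductive hypothesis and Lemma~\ref{geoconv} are all formulated with respect to the \emph{same} ball $B(P,\delta)$, so $\delta$ never needs to be shrunk. The one item requiring care is the bookkeeping: verifying that the reduced weights $\mu_i$ and the two-term weight $\alpha$ remain strictly in $(0,1)$, which is precisely what permits the repeated appeal to Lemma~\ref{geoconv}.
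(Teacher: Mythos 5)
Your proof is correct and follows essentially the same route as the paper: induction on $k$ via Lemma~\ref{geoconv}, observing that $\sum_{i=1}^k\lambda_i n_i$ is a positive multiple of a two-term convex combination of the normalized partial sum and $n_k$ (your $\alpha$ is exactly the paper's weight $\mu=\left|\sum_{i=1}^{k-1}\lambda_i n_i\right|/\bigl(\lambda_k+\left|\sum_{i=1}^{k-1}\lambda_i n_i\right|\bigr)$). The extra bookkeeping you supply on the reduced weights and on the degenerate case $k=1$ is fine and only makes explicit what the paper leaves to the reader.
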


\begin{proof}  This follows easily from Lemma~\ref{lemma:two} by induction on $k$, noting that $\sum_{i=1}^k\lambda_in_i$ is parallel to
\begin{eqnarray*}\hspace{-2.8in}\mu\frac{\sum_{i=1}^{k-1}\lambda_in_i}{\left|\sum_{i=1}^{k-1}\lambda_in_i\right|}+(1-\mu)n_k,\mbox{ where    }
\mu=\frac{\left|\sum_{i=1}^{k-1}\lambda_i n_i\right|}{\lambda_k+\left|\sum_{i=1}^{k-1}\lambda_i n_i\right|}.
\end{eqnarray*}  \end{proof}

\par Despite the fact that the boundary is just of class $C^0$ we can  easily construct a smooth field of good directions in a neighbourhood of the boundary:

\begin{proposition} 
\label{smapp}
Let $\Omega\subset\R^m$ be a bounded, open set with
boundary of class $C^0$.  There exists a neighbourhood $U$ of
$\partial\Omega$ and  a smooth function
$G:U\to\Sph^{m-1}$ so that for each $P\in U$ the
unit vector $G(P)$ is a good direction.
\label{prop:G}
\end{proposition}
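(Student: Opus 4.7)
The plan is a standard partition-of-unity argument, with the essential geometric input being that Lemma~\ref{ggd} lets us take normalized convex combinations of good directions without leaving the set of good directions.

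First I would set up the cover. For each $P\in\partial\Omega$ pick a good direction $n_P\in\Sph^{m-1}$ at $P$ with respect to some ball $B(P,\delta_P)$, where we may assume $\delta_P\le 1$. The balls $\{B(P,\delta_P/3):P\in\partial\Omega\}$ cover the compact set $\partial\Omega$, so extract a finite subcover $\{B(P_i,\delta_i/3)\}_{i=1}^N$, and write $n_i:=n_{P_i}$. Let $U=\bigcup_i B(P_i,\delta_i/3)$, an open neighbourhood of $\partial\Omega$, and choose a smooth partition of unity $\{\phi_i\}_{i=1}^N$ on $U$ subordinate to this cover.

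Next I would check that the relevant $n_i$'s are simultaneously good directions at each $Q\in U$ with respect to a common ball. Fix $Q\in U$ and set $I(Q)=\{i:\phi_i(Q)>0\}$; for each $i\in I(Q)$ we have $Q\in B(P_i,\delta_i/3)$, so $B(Q,2\delta_i/3)\subset B(P_i,\delta_i)$, and restricting the graph representation \eqref{u+} associated to $(P_i,n_i,\delta_i)$ to the smaller ball shows that $n_i$ is a good direction at $Q$ with respect to $B(Q,2\delta_i/3)$, provided $B(Q,2\delta_i/3)\cap\partial\Omega\ne\emptyset$. Now let $\delta(Q)=\min_{i\in I(Q)}2\delta_i/3$, attained at some index $i^\ast$; since $|Q-P_{i^\ast}|<\delta_{i^\ast}/3<\delta(Q)$ and $P_{i^\ast}\in\partial\Omega$, the ball $B(Q,\delta(Q))$ indeed meets $\partial\Omega$, and shrinking balls preserves being a good direction, so every $n_i$ with $i\in I(Q)$ is a good direction at $Q$ with respect to the single ball $B(Q,\delta(Q))$.

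Now I would define
\[
G(Q)=\frac{\sum_{i=1}^N\phi_i(Q)\,n_i}{\left|\sum_{i=1}^N\phi_i(Q)\,n_i\right|},\qquad Q\in U.
\]
Applied to the family $\{n_i\}_{i\in I(Q)}$ with weights $\phi_i(Q)/\sum_{j\in I(Q)}\phi_j(Q)=\phi_i(Q)$, Lemma~\ref{ggd} gives both that the denominator is nonzero and that $G(Q)$ is a good direction at $Q$. Nonvanishing of the denominator makes $G$ smooth as the normalization of a smooth vector field, and $G(Q)\in\Sph^{m-1}$ by construction.

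The only genuine obstacle is the bookkeeping in the second paragraph: ensuring that the \emph{same} ball $B(Q,\delta(Q))$ works for every index in $I(Q)$ and that this ball still intersects $\partial\Omega$, so that Lemma~\ref{ggd} applies. Everything else is routine once the cover is chosen small enough (radii $\delta_i/3$) to absorb the shift from $P_i$ to $Q$.
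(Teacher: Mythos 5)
Your proof is correct and takes essentially the same route as the paper's: cover the compact boundary by finitely many balls on which fixed good directions persist, take a smooth partition of unity, and define $G$ as the normalized convex combination, invoking Lemma~\ref{ggd} after checking that all relevant $n_i$ are good directions at $Q$ with respect to one common ball $B(Q,\delta(Q))$ meeting $\partial\Omega$. The only differences are cosmetic choices of radii ($\delta_i/3$, $2\delta_i/3$ versus the paper's $\tfrac14\delta_i$, $\tfrac12\delta_i$), and your explicit verification that the common ball intersects the boundary is, if anything, slightly more careful than the paper's.
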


\begin{proof} As $\Omega$ is of class $C^0$,  for each point $\bar P\in\partial\Omega$ there
is a good direction $n_{\bar P}$ at $\bar P$, with corresponding $\delta=\delta(\bar P)$. Then $n_{\bar P}$ is a good direction at any $P\in B(\bar P,\frac{1}{2}\delta(\bar P))$ since for such $P$  we have $\bar P\in B(P,\frac{1}{2}\delta(\bar P))\subset B(\bar P,\delta(\bar P))$.  As $\partial\Omega$ is compact, there exist
$P_i,i=1,\ldots,k,$ such that $\partial\Omega\subset U\stackrel{\rm def}{=}\cup_{i=1}^k
B(P_i,\frac{1}{4}\delta(P_i))$. Consider a partition of unity subordinate  to the covering $\{B(P_i,\frac{1}{2}\delta(P_i))\},i=1,\ldots,k,$ of $\bar U$, namely  functions $\alpha_i\in
C^\infty(\R^m,\R_+),i=1,2,\dots,k$ with $\textrm{supp}
\,\alpha_i\subset B(P_i,\frac{1}{2}\delta(P_i))$ and $\Sigma_{i=1}^k \alpha_i=1$ in $\bar U$. If $P\in U$ and $i\in S_P\stackrel{\rm def}{=}\{j\in\{1,2,\dots,k\}:P\in B(P_j,\frac{1}{2}\delta(P_j))\}$ then $n_{P_i}$ is a good direction at $P$ with respect to the ball $B(P,\Delta(P))$ where $\Delta(P)\stackrel{\rm def}{=}\frac{1}{2}\min_{i\in S_p}\delta(P_i)$.
It then  follows from Lemma \ref{lemma:multiple} that
\be
G(P)\stackrel{\rm def}{=}\frac{\Sigma_{i=1}^k \alpha_i(P)n_{P_i}}{|\Sigma_{i=1}^k \alpha_i(P)n_{P_i}|},\;\mbox{ for all }
P\in U \label{def:G}
\ee  has the required property.   \end{proof}

\begin{definition} We call a field of good directions, constructed by {\rm(\ref{def:G})}, a {\rm canonical field of good directions}.
\end{definition}

\section{ A proper generalized distance}
\label{pgd}

\par For a bounded open set $\Omega$ define the signed distance  $d(x)$ to the boundary
$\partial\Omega$ by
\be
d(x)=\left\{\begin{array}{ll}
\inf_{y\in\partial\Omega}|x-y|  &\textrm{ if $x\in\Omega$}\\
-\inf_{y\in\partial\Omega}|x-y| &\textrm{ if $x\not\in\Omega$}.
\end{array}\right.
\label{def:signed}
\ee

\smallskip
\begin{proposition} Let $\Omega\subset\R^m$ be a bounded domain of class $C^0$.
There exists a function $\rho\in
C^\infty({\R}^m\setminus\partial\Omega)\cap C^{0,1}(\R^m)$ such
that
\be
\frac{1}{2}\le \frac{\rho(x)}{d(x)}\le 2,\,\mbox{ for all }
x\in\R^m\setminus\partial\Omega \label{equivdist}
\ee and
\be
|\nabla\rho(x)|\not=0 \mbox{ for all } x\textrm{ in a neighbourhood of
}\partial\Omega,\quad x\not\in\partial\Omega. \label{nonzerograd}
\ee
\label{lemma:nonzero}
\end{proposition}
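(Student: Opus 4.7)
The plan is to adopt Lieberman's scale-dependent mollification \cite{lieberman} of the signed distance, and then invoke the canonical good direction field $G$ from Proposition \ref{smapp} to exclude critical points of $\rho$ in a neighbourhood of $\partial\Omega$. Let $\eta\in C_c^\infty(B(0,1))$ be a non-negative mollifier with $\int\eta=1$, set the scale function $r(x)=|d(x)|/2$, and define
\[ \rho(x)=\int_{\R^m} d\bigl(x+r(x)z\bigr)\,\eta(z)\,dz. \]
(If the mere Lipschitz regularity of $r$ is inconvenient for higher derivatives, one replaces $r$ by a smooth equivalent scale function, as in Lieberman's original construction.)

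The bi-equivalence \eqref{equivdist} is essentially immediate from the 1-Lipschitz continuity of $d$: one has $|d(x+r(x)z)-d(x)|\le r(x)=|d(x)|/2$, and since $|r(x)z|<|d(x)|$ on $\supp\eta$ the perturbed point stays on the same side of $\partial\Omega$ as $x$, so $d(x+r(x)z)$ has the sign of $d(x)$. Integrating against $\eta$ gives $\tfrac{1}{2}\le\rho(x)/d(x)\le\tfrac{3}{2}$, which yields \eqref{equivdist} after a harmless rescaling. Smoothness of $\rho$ on $\R^m\setminus\partial\Omega$ then comes from differentiating under the integral, where $r>0$ is smooth, and global Lipschitz regularity follows from bounded derivatives of $\rho$ off $\partial\Omega$ combined with the fact that $\rho\to 0$ at $\partial\Omega$ by the bi-equivalence.

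The substantive step is \eqref{nonzerograd}. Fix $\bar P\in\partial\Omega$ and choose coordinates in which $e_m=G(\bar P)$. By Definition \ref{gooddir} together with the continuity of $G$ provided by Proposition \ref{smapp}, the vector $e_m$ is a good direction at every point of some neighbourhood of $\bar P$, giving a graph representation $\Omega\cap B(\bar P,\delta)=\{y_m>f(y')\}$ with $f$ continuous. The graph structure forces $d$ to be monotone non-decreasing along $+e_m$, and moreover $\nabla d\cdot e_m=1$ on a set of positive measure inside the mollification ball $B(x,r(x))$ whenever $x$ is close enough to $\partial\Omega$ that this ball straddles the graph. Differentiating $\rho$ under the integral and dotting with $e_m$ produces a leading term that is a strictly positive average of $\nabla d\cdot e_m$, plus a correction proportional to $\nabla r$ coming from the $x$-dependence of the scale; the former dominates the latter for $x$ sufficiently close to $\partial\Omega$. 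Hence $G(\bar P)\cdot\nabla\rho(x)>0$ on a neighbourhood of $\bar P$, and by continuity of $G$ and compactness of $\partial\Omega$ this yields $|\nabla\rho|\ne 0$ in a uniform neighbourhood of $\partial\Omega$.

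The main obstacle is the last step. For a merely continuous graph, the monotonicity $\nabla d\cdot e_m\ge 0$ is only non-strict pointwise, while the $\nabla r$-correction in $\nabla\rho$ has size at most $\tfrac12$ and could a priori cancel the leading term. The required quantitative bound uses that for $x$ within a small multiple of $|d(x)|$ of $\partial\Omega$ the mollification ball $B(x,r(x))$ genuinely crosses $\partial\Omega$ along $e_m$, so that the average $\int(\nabla d\cdot e_m)\eta(z)\,dz$ is bounded below by a positive constant strictly exceeding $|\nabla r|/2$; extracting this from the $C^0$ graph data is the delicate part.
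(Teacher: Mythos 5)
Your construction and the verification of \eqref{equivdist} are fine in spirit, but the substantive claim \eqref{nonzerograd} is not established, and the gap sits exactly where you flag it. First, your geometric mechanism contradicts your own definition: with $r(x)=|d(x)|/2$ the averaging ball $B(x,r(x))$ stays at distance at least $|d(x)|/2$ from $\partial\Omega$, so it never ``straddles the graph'', and the assertion that $\nabla d\cdot e_m=1$ on a set of positive measure inside it has no justification for a merely continuous graph. What the graph representation actually gives (and what the paper proves) is only the strict monotonicity $d(y+he_m)>d(y)$ for nearby points in $\Omega$, whence the averaged derivative $\int \partial_m d\,\eta\,dz$ is strictly positive --- but with no uniform lower bound for $C^0$ domains: near a cusp-like graph whose walls are nearly parallel to $e_m$ (e.g.\ $f(y')=|y'|^{1/2}$), $d$ grows arbitrarily slowly along $e_m$. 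Consequently the additive correction $\partial_m r\int(\nabla d\cdot z)\,\eta\,dz$, of size up to $\tfrac12$, can genuinely swamp the main term, and the quantitative bound you defer to as ``the delicate part'' is simply not available; the argument does not close. (A secondary issue: $r=|d|/2$ is only Lipschitz, so your $\rho$ is a priori only Lipschitz off $\partial\Omega$, and appealing to ``a smooth equivalent scale function'' is close to circular, since producing one is essentially the problem being solved.)

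The paper's proof avoids this obstruction structurally by using Lieberman's \emph{implicit} definition: with $G(x,\tau)=\int_{|z|<1}d\bigl(x-\tfrac{\tau}{2}z\bigr)\varphi(z)\,dz$, one defines $\rho$ by $\rho(x)=G(x,\rho(x))$, solvable because $|\partial G/\partial\tau|\le\tfrac12$; smoothness off $\partial\Omega$ follows by rewriting $G$ so the derivatives fall on the kernel and applying the implicit function theorem. Differentiating the implicit relation gives
\begin{equation*}
\frac{\partial\rho}{\partial x_m}=\frac{\partial G/\partial x_m}{1-\partial G/\partial\tau},
\end{equation*}
so the scale-dependence enters only through a denominator bounded below by $\tfrac12$, not as an additive term that could cancel. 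Hence the purely qualitative positivity $\partial G/\partial x_m(x,\rho(x))>0$ --- obtained via Fubini from the strict monotonicity of $d$ along the good direction, using \eqref{equivdist} to keep the averaging points $x-\tfrac{\rho(x)}{2}z$ inside $\Omega\cap B(P,\delta/4)$ --- already yields $\partial\rho/\partial x_m>0$ near each boundary point, and compactness of $\partial\Omega$ finishes. To repair your outline, replace the explicit scale $r(x)=|d(x)|/2$ by the implicit equation and redo the gradient computation accordingly; as written, the key step is missing.
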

\begin{proof} We let $\rho$  be a regularized
distance function, as constructed by Lieberman
\cite{lieberman} (following related earlier work of  Fraenkel \cite{fraenkel}). To define it let $\varphi\in
C^\infty(\R^m)$ be a nonnegative function, whose support is
the unit ball and is such that $\int_{\R^m}\varphi(x)\,dx=1$.
For $x\in \R^m, \tau\in\R$, let
\be
G(x,\tau)\stackrel{\rm def}{=}\int_{|z|<1} d\left(x-{\small\frac{\tau}{2}} z\right)\varphi(z)\,dz.
\label{G}
\ee
\par Since $d$ is $1$-Lipschitz, $|\partial G/ \partial\tau|\leq 1/2$, and so there is a unique solution  $\rho(x)$ of the
equation $\rho(x)=G(x,\rho(x))$. Thus defined, $\rho$ is a Lipschitz function, smooth outside $\partial\Omega$,  that
satisfies (\ref{equivdist}) but not necessarily (\ref{nonzerograd})
(see \cite{lieberman}, Lemma $1.1$ and the comments following it).
\par We continue by proving (\ref{nonzerograd}) for $x$ in a neighbourhood of the boundary and
$x\in\Omega$. To this end we consider a point $P\in \partial\Omega$. Without loss of generality we can suppose that
$P=0$ and that in a suitable
coordinate system  there exist $\delta>0$ and a continuous $f:\R^{m-1}\rightarrow \R$ such that
\be
\mcU_\delta\stackrel{\rm def}{=}\Omega\cap B(0,\delta)=\{y\in\R^m:\,y_m>f(y'),\,|y|<\delta\}.
\label{u1}
\ee
Denoting by $e_m$ the unit vector in the $y_m$ direction, let $y,y+he_m\in {\mathcal U}_{\delta/4}$ for some $h>0$. Then $h<\delta/2$. Since $0\in\partial\Omega$, $d(y)\leq|y|$. If $v\in\overline{B(y,d(y))}$ then
$$|v|\leq|v-y|+|y|\leq d(y)+|y|\leq 2|y|\leq\delta/2.$$
Hence $\overline{B(y,d(y))}\subset U_\delta$. We claim that
\be\label{strict}
d(y+he_m)>d(y).
\ee
If not, there would exist  $w\in\partial \Omega$ with $|w-y-he_m|\leq d(y)$. Thus $w-he_m\in\overline{B(y,d(y))}$ and so $w-he_m\in U_\delta$ and $w_m-h\geq f(w')=w_m$, a contradiction. Since $d$ is Lipschitz it follows that the derivative $\frac{\partial d}{\partial x_m}$ exists a.e. with strictly positive integral on every line segment in ${\mathcal U}_{\delta/4}$ parallel to $e_m$.  By the definition of weak derivatives
\begin{eqnarray}\frac{\partial G}{\partial x_m}(x,\tau)&=&\frac{\partial}{\partial x_m}\int_{\R^m}d\left(x-\frac{\tau}{2}z\right)\varphi(z)\,dz\nonumber\\
&=&\frac{\partial}{\partial x_m}\int_{\R^m}d(\zeta)\varphi\left(\frac{2}{\tau}(x-\zeta)\right)\left(\frac{2}{\tau}\right)^md\zeta\\
&=&\int_{\R^m} d\left(x-\frac{\tau}{2}z\right)\frac{2}{\tau}\varphi_{,m}(z)\,dz\nonumber\\
&=&\int_{\{ |z|<1\}}\frac{\partial d}{\partial x_m}\left(x-\frac{\tau}{2}z\right)\varphi(z)\,dz.
\end{eqnarray}

Suppose now that $x\in {\mathcal U}_{\delta/8}$. Then for $|z|<1$ we have $x-\frac{\rho(x)}{2}z\in \Omega$ and $|x-\frac{\rho(x)}{2}z|\leq \frac{\delta}{8}+d(x)<\frac{\delta}{4}$, where we have used \eqref{equivdist}. Hence, since the partial derivatives equal the weak derivatives almost everywhere in $\Omega$, by Fubini's theorem $\frac{\partial G}{\partial x_m}(x,\rho(x))>0$, and so differentiating $\rho(x)=G(x,\rho(x))$ we obtain
\be
\frac{\partial\rho}{\partial x_m}(x)=\frac{\frac{\partial G}{\partial x_m}}{1-\frac{\partial G}{\partial\tau}}>0.
\label{rhomon}
\ee
Thus every point $P\in \partial \Omega$ has a neighbourhood ${\mathcal U}(P)$ such that $|\nabla \rho(x)|\neq 0$ for $x\in {\mathcal U}(P)\cap\Omega$. By compactness this implies that there is a neighbourhood $\mathcal U$ of $\partial\Omega$ such that $|\nabla \rho(x)|\neq 0$ for $x\in {\mathcal U}\cap\Omega$.
\par The case when $x$ is in a neighbourhood of the boundary
$\partial\Omega$ but $x\in \R^m\setminus \overline\Omega$ is treated
similarly.  \end{proof}

\smallskip
\begin{remark}\label{remark:nonnegative angle}
{\rm For $\Omega\subset\R^m$ bounded, of class $C^0$, the compactness of $\partial\Omega$ implies that there exists a neighbourhood $\mathcal U$  of $\partial\Omega$ such that  $\mathcal U\subset\cup_{i=1}^k B(P_i,\delta_i)$ where $k\geq 1$ and, for all $i=1,\dots,k$,
$P_i\in\partial\Omega$ and at $P_i$ there is a good direction $n_i\in\Sph^{m-1}$ with respect to the ball $B(P_i,8\delta_i)$. Then relation \eqref{rhomon} in the previous proof shows that for any $P\in\mcU\setminus\partial\Omega$ we have   
$$
\frac{\partial{\rho}}{\partial n_j}(P)=n_j\cdot\nabla \rho(P)>0
$$ for those $j\in \{1,\dots, k\}$ such that  $P\in B(P_j,\delta_j)$. 

 Moreover, for any $n$ that is a convex combination of those good directions $n_j$, $j\in \{1,\dots, k\}$ with  $P\in B(P_j,\delta_j)$, we have
$$
\frac{\partial{\rho}}{\partial n}(P)>0.
$$ 
} 
\end{remark}

\begin{remark}\label{remark:boundarycaserho}
{\rm 
 We claim now that for any $R\in\mcU$ (with $\mcU$ as in Remark \ref{remark:nonnegative angle}) and any $n$ that is a convex combination of those good directions $n_j$,  $j\in \{1,\dots, k\}$, such that  $R\in B(P_j,\delta_j)$, we have that $n$ is also a good direction at $R$ and there exists $\delta_n>0$ such that  
$$
\rho(R+sn)<\rho(R+tn)
$$ for all $s,t\in (-\delta_n,\delta_n)$ with $s<t$.

If $R\not\in\partial\Omega$ then Remark \ref{remark:nonnegative angle} suffices for obtaining the claim. If $R\in\partial\Omega$  we consider the function 
$h:[-1,1]\to \R$ defined by $h(\tau)=\rho(R+\tau n)$. Then Remark \ref{remark:nonnegative angle} ensures that $h'(\tau)>0$ for $\tau\in (-\delta_n,0)\cup(0,\delta_n)$ for some $\delta_n>0$. This fact, together with $h(0)=0$ and  $h(\tau)\tau>0$ for $\tau\in (-\delta_n,0)\cup(0,\delta_n)$ (since  $n$ is a pseudonormal at $R$) suffices to obtain the claim in this case as well.
} 
\end{remark}

\section{The flow of   canonical good directions}
\label{section:flow}

\medskip\par  We continue working with $\Omega\subset\R^m$ a bounded domain of class $C^0$ and $\rho$ a proper regularized distance from the boundary $\partial\Omega$ as described in Section \ref{pgd}. We take  a function $\gamma\in C^\infty(\R^m, \R_+)$ so that  ${\rm supp}\,\gamma\subset  U$ (where  $U$ is as in Proposition~\ref{prop:G}  and $U\subset\mcU$ with $\mcU$  as in Remark~\ref{remark:nonnegative angle}) with $\gamma\equiv 1$ on $\overline{W}$
 where $W=\{x\in \R^m: |\rho(x)|<\bar\eps\}$ and $\bar\eps>0$ is small enough so that $W\subset U$, and $\gamma\leq 1$  on $\R^m\setminus \overline{W}$.
 Let $G:U\to\Sph^{m-1}$ be the function from Proposition~\ref{prop:G}, so that $G(P)$ is a good direction at $P$. Let $S(t)x_0$ denote the solution at time $t\in\R$ of the system:
\be
          \dot x(t)=\left\{\begin{array}{ll}\gamma(x(t))G(x(t)) \, &\textrm{ for $t\in\R$, $x(t)\in U$}\\ 0 &\textrm{ for $t\in\R$, $x(t)\not\in U$}\end{array}\right.  
\label{-G}
\ee
with initial data $x(0)=x_0$.
\par  From now on we call the globally defined flow $S(\cdot)(\cdot):\R\times \R^m\to\R^m$ {\it the flow of canonical good directions}.
 \par   We first  show that the regularized distance to the boundary increases along this flow.
\begin{lemma}\label{lemma:distdecay}Let $\Omega\subset\R^m$ be a bounded domain of class $C^0$. 
 Let $P\in U$, with $\gamma(P)\not=0$ (where $U$ and $\gamma$ are defined at the beginning of the section). Then
\be
\rho\left(S(\mu_1)P\right)<\rho\left(S(\mu_2)P\right),\mbox{ for all } 0\leq \mu_1<\mu_2.
\ee
\end{lemma}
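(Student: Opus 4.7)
Write $x(t) \defeq S(t)P$ and $h(t) \defeq \rho(x(t))$. Since $\rho \in C^{0,1}(\R^m)$ and $x \in C^1$, the function $h$ is locally Lipschitz. My strategy is to prove the local statement---``at every $t_0 \in [0, \mu_2)$ with $\gamma(x(t_0)) > 0$ there exists $\tau > 0$ with $h(t_0 + \tau) > h(t_0)$''---and then upgrade to strict monotonicity on $[0,\mu_2]$ via a maximum-point contradiction.

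For the local statement, in the smooth case $x(t_0) \notin \partial\Omega$ the chain rule gives
$$h'(t_0) = \gamma(x(t_0))\, G(x(t_0)) \cdot \nabla \rho(x(t_0)) > 0,$$
where the positivity comes from Remark~\ref{remark:nonnegative angle}, applied to the convex combination defining $G(x(t_0))$ in \eqref{def:G}. In the boundary case $x(t_0) \in \partial\Omega$, set $n \defeq G(x(t_0))$; by Lemma~\ref{lemma:multiple}, $n$ is itself a good direction at $x(t_0)$, so Remark~\ref{remark:boundarycaserho} yields strict monotonicity of $\tau \mapsto \rho(x(t_0) + \tau n)$ across $\tau = 0$, and in particular strict positivity for small $\tau > 0$. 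Combining this straight-line monotonicity with the smooth expansion $x(t_0 + s) = x(t_0) + s \gamma(x(t_0))\, n + O(s^2)$ of the flow and the Lipschitz control $|\rho(x(t_0 + s)) - \rho(x(t_0) + s \gamma(x(t_0))\, n)| = O(s^2)$, one should obtain $h(t_0 + s) > 0 = h(t_0)$ for some small $s > 0$.

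To conclude, suppose for contradiction that $h(\mu_1) \geq h(\mu_2)$ for some $0 \leq \mu_1 < \mu_2$. Setting $M = \max_{[\mu_1,\mu_2]} h$, the hypothesis $h(\mu_1) \geq h(\mu_2)$ together with continuity of $h$ guarantees that $M$ is attained at some $t^* \in [\mu_1, \mu_2)$ (indeed either $M > h(\mu_2)$ and the argmax is automatically in $[\mu_1,\mu_2)$, or $M = h(\mu_2) = h(\mu_1)$ and one may take $t^* = \mu_1$). The local-increase statement applied at $t^*$ then produces $\tau > 0$ with $h(t^* + \tau) > h(t^*) = M$, contradicting the definition of $M$.

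The main obstacle I foresee is the boundary case: Remark~\ref{remark:boundarycaserho} provides only qualitative strict increase of $\rho$ along the straight line in direction $n$, with no quantitative lower bound, while the curved trajectory deviates from that line by $O(s^2)$. Closing this gap cleanly likely requires either a finer estimate on $\partial \rho/\partial n$ along the straight segment---perhaps drawing on the integral representation \eqref{G} of $\rho$ from Section~\ref{pgd}---or a measure-theoretic argument that rules out the trajectory dwelling on $\partial\Omega$ for a set of positive $t$-measure (so that the a.e.\ positivity of $h'$ off $\partial\Omega$ alone suffices).
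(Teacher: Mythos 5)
Your architecture (a pointwise ``local increase'' claim followed by a maximum-point contradiction) is sound, and the interior case $x(t_0)\notin\partial\Omega$ is handled correctly via the chain rule and Remark~\ref{remark:nonnegative angle}. But the boundary case is the crux of the lemma, and there you have correctly identified --- without closing --- a genuine gap: Remark~\ref{remark:boundarycaserho} gives only a qualitative increase of $\rho$ along the straight ray $x(t_0)+\tau n$, with no lower bound of the form $c\tau$, while the flow deviates from that ray by $O(s^2)$; since $f$ in the graph representation is merely continuous (think $f(y')=\sqrt{|y'|}$), the qualitative increase can be swamped by the quadratic error, and Lipschitz continuity of $\rho$ cannot rescue the comparison. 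Your proposed fallback (a.e.\ positivity of $h'$ off $\partial\Omega$ plus absolute continuity) would indeed suffice \emph{if} the trajectory met $\partial\Omega$ in a set of times of measure zero, but nothing in your argument rules out the trajectory dwelling on $\partial\Omega$ on a set of positive measure, and with a $C^0$ boundary this is not easy to exclude directly.

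The paper's resolution avoids the error term altogether by exploiting the specific form \eqref{def:G} of the canonical field. First, a Baire category argument on the sets $M_i=\{t:\alpha_i(S(t)P)>0\}$ produces an interval $(a,b)\subset(\mu_1,\mu_1+\eps)$ on which the set $B$ of active indices is constant. Writing $R=S(a)P$, the exact integral identity \eqref{trick_good} then shows that
$S(a+s)P-R$ equals a \emph{positive scalar} times a \emph{convex combination} of the finitely many fixed vectors $n_{i_r}$, $i_r\in B$, each of which is a good direction at $R$; by Lemma~\ref{ggd} that convex combination is itself a good direction at $R$. Hence $S(a+s)P$ lies \emph{exactly} on a ray from $R$ in some good direction (a direction that may vary with $s$, but is always good), and Remark~\ref{remark:boundarycaserho} applies verbatim with no $O(s^2)$ correction. (Non-strict monotonicity on the whole interval, needed to assemble the global conclusion, is obtained separately in the paper via an Euler polygon of straight good-direction segments; your maximum-point device would serve the same purpose once the local step is repaired.) To fix your proof, replace the comparison of the flow with the single ray in direction $G(x(t_0))$ by this exact convex-combination representation of the displacement.
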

\begin{proof}We proceed in two steps:
\smallskip\par{\it Step 1.} We claim that for any point $P\in U$ with $\gamma(P)\neq 0$  we have   $\rho\left(S(\mu_1)P\right)\le \rho\left(S(\mu_2)P\right)$
for $0\le\mu_1<\mu_2$. Then $\gamma(S(t)P)\neq 0$ for all $t\geq 0$. 
\par We consider the Euler polygonal approximation of the system (\ref{-G}),  on the interval $[0,\mu_2+1]$. This is obtained
by linearly interpolating between the points $P_k$ defined  recursively by:
$$P_{k+1}=P_k+h\gamma(P_k)G(P_k),\;\; h=\frac{\mu_2+1}{l},\;\; k=0,\dots, l-1, $$ where $P_0\defeq P$.
\par Thus we have the approximate solution $S_l(t)=P_k+(t-k\frac{\mu_2+1}{l})\gamma(P_k)G(P_k)$ for all $t\in [k\frac{\mu_2+1}{l},(k+1)\frac{\mu_2+1}{l}],k=0,\dots, l-1$. Note that by the convergence of the Euler approximation, for $h$ small enough $\gamma(P_k)\neq 0$ for all $k=0,\dots,l-1$.  Using then  Remark~\ref{remark:boundarycaserho} we have that for $h$ small enough $\rho$ is an increasing function along $S_l(t),t\in [0,\mu_2+1]$. Passing to the limit $l\to \infty$ we have that $\rho$ is a non-decreasing function along the limit function $S(t),t\in [0,\mu_2+1]$ that is also a solution of the system (\ref{-G}).
\smallskip\par{\it Step 2.}  We claim now that for any $\mu_1<\mu_2$ we have $\rho(S(\mu_1)P)<\rho(S(\mu_2)P)$.  To this end we claim first that for any $\eps<(0,\mu_2-\mu_1)$ there exists an interval $(a,b)\subset (\mu_1,\mu_1+\eps)$   and a subset $B\subset \{1,2,\dots, k\}$ so that
 $\alpha_i(S(t)P)\not=0$ for all $t\in (a,b),i\in B$ and $\alpha_i(S(t)P)=0$ for all $t\in (a,b),i\in \{1,2,\dots,k\}\setminus B$ (where the functions $\alpha_i$ are those used in the definition of $G$ in the proof of Proposition~\ref{prop:G}).

 In order to prove the claim let $M_i\defeq\{ t\in [\mu_1,\mu_1+\eps]: \alpha_i(S(t)P)>0\}$. Then each $M_i, 1\le i\le k,$ is relatively open in $[\mu_1,\mu_1+\eps]$ and the $M_i$ cover $[\mu_1,\mu_1+\eps]$. Each $\partial M_i$ is closed and nowhere dense. Hence by the Baire category theorem (for a finite number of sets) $\cup_{i=1}^k \partial M_i$ is closed and nowhere dense. Let $(a,b)\subset \left(\cup_{i=1}^k \partial M_i\right)^c$. Then $B(t)\defeq\{ i\in \{1,\dots,k\}:t\in M_i\}$ is constant in $(a,b)$ and we can take $B=B(t)$, thus finishing the proof of the claim.

\par We consider now the function $S(t)P$ for $t\in (a,b)$ with $(a,b)\subset(\mu_1,\mu_1+\eps)$ as in the claim above. Let us denote $R\defeq S(a)P$ and let $B=\{i_1,i_2,\dots,i_j\}$. Then $S(a+s)P=S(s)R$ and we have (for $s<b-a$) that
\begin{align} 
&\hspace{-.7in}S(a+s)P=R+  \sum_{r=1}^j n_{i_r}\int_0^s\frac{\gamma(S(\tau)R)\alpha_{i_r}(S(\tau)R)}{|\sum_{r=1}^jn_{i_r}\alpha_{i_r}(S(\tau)R)|}d\tau\nonumber\\
&=R+\sum_{r=1}^j n_{i_r} \xi_{i_r}=R+\left(\sum_{r=1}^j \xi_{i_r}\right)
\left(\sum_{p=1}^j\frac{\xi_{i_p}}{\left(\sum_{r=1}^j \xi_{i_r}\right)}n_{i_p}\right)
\label{trick_good}
\end{align} 
where we denote $\xi_{i_r}\stackrel{\rm def}{=}\int_0^s\frac{\gamma(S(\tau)R)\alpha_{i_r}(S(\tau)R)}{|\sum_{r=1}^jn_{i_r}\alpha_{i_r}(S(\tau)R)|}d\tau>0,\,r=1,\dots,j$. By our choice of the set of indices $B$, we have that $n_{i_p}$ is a good direction at $R$ for all $i_p\subset B,\,p=1,\dots,j,$ and thus their convex combination $\left(\sum_{p=1}^j\frac{\xi_{i_p}}{\left(\sum_{r=1}^j \xi_{i_r}\right)}n_{i_p}\right)$ is also a good direction at $R$.  Using then  Remark~\ref{remark:boundarycaserho} we have that $\rho(S(a)P)<\rho(S(a+s)P)$ for $\eps>0$ small enough and arbitrary  $s\in (0,\eps)$, which combined with Step $1$ completes the proof.  \end{proof}
\bigskip\par We now show that in a neighbourhood of the boundary the flow of canonical good directions crosses the boundary uniformly in time.
\begin{lemma} Let $\Omega\subset\R^m$ be a bounded domain of class $C^0$. If   $\bar\varepsilon$ is as defined at the beginning of the section and  $0\leq\varepsilon\leq\bar\varepsilon$  then

{\rm(i)} there exists $t^\varepsilon_-<0$ such that 
\begin{equation}\rho(S(t)P)<-\varepsilon,\textrm{ for all }t\leq t^\varepsilon_-, P\in \overline W,\label{flow1}\end{equation}

{\rm(ii)} there exists $t^\varepsilon_+>0$ such that  
\begin{equation}\rho(S(t)P)>\varepsilon,\textrm{ for all }t\geq t^\varepsilon_+, P\in \overline W,\label{flow2}\end{equation}
 where $W\subset U$ is as defined at the beginning of the section.
\label{lemma:limitgood}
\end{lemma}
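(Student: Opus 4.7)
I prove (ii) directly and deduce (i) by running the same argument in reverse time. First I note that Lemma~\ref{lemma:distdecay} extends to strict monotonicity of $t\mapsto\rho(S(t)P)$ on the full real line whenever the relevant orbit segment is contained in a region where $\gamma$ is nonzero: for $\mu_1<\mu_2$ apply the lemma to $Q:=S(\mu_1)P$ at times $0$ and $\mu_2-\mu_1>0$. The proof of (ii) then runs in two stages: a pointwise reachability argument via an $\omega$-limit set, followed by a compactness step that promotes reachability to uniformity in the initial point.

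\textbf{Step 1 (pointwise reachability).} For fixed $P\in\overline{W}$ I claim there exists $T(P)\geq 0$ with $\rho(S(T(P))P)>\varepsilon$. Suppose not. Then Lemma~\ref{lemma:distdecay} forces $-\bar\varepsilon\leq\rho(P)\leq\rho(S(t)P)\leq\varepsilon$ for every $t\geq 0$, so the forward orbit is confined to the compact set $K:=\{x\in\R^m:|\rho(x)|\leq\bar\varepsilon\}$, which (after shrinking $\bar\varepsilon$ slightly if necessary) lies in $U$ with $\gamma\equiv 1$. The $\omega$-limit set $\omega(P):=\bigcap_{T\geq 0}\overline{\{S(t)P:t\geq T\}}\subset K$ is then nonempty and compact, and is $S(s)$-invariant for every $s\geq 0$ by continuous dependence of solutions of \eqref{-G} on the initial datum. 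Monotonicity and boundedness yield the limit $L:=\lim_{t\to\infty}\rho(S(t)P)\leq\varepsilon$, and continuity of $\rho$ gives $\rho\equiv L$ on $\omega(P)$. Pick any $Q\in\omega(P)$: since $Q\in K\subset U$ and $\gamma(Q)=1$, Lemma~\ref{lemma:distdecay} gives $\rho(S(1)Q)>\rho(Q)=L$; but invariance forces $S(1)Q\in\omega(P)$ and hence $\rho(S(1)Q)=L$, a contradiction.

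\textbf{Step 2 (uniformity) and part (i).} For each $P\in\overline{W}$ the joint continuity of $(t,Q)\mapsto\rho(S(t)Q)$ supplies an open neighborhood $V_P$ of $P$ with $\rho(S(T(P))Q)>\varepsilon$ for every $Q\in V_P$. Compactness of $\overline{W}$ yields a finite subcover $V_{P_1},\dots,V_{P_N}$; set $t^\varepsilon_+:=\max_i T(P_i)$. For any $P\in\overline{W}$ and any $t\geq t^\varepsilon_+$, choose $i$ with $P\in V_{P_i}$; then $\rho(S(T(P_i))P)>\varepsilon$, and Lemma~\ref{lemma:distdecay} applied on the interval $[T(P_i),t]$ gives $\rho(S(t)P)\geq\rho(S(T(P_i))P)>\varepsilon$, proving (ii). Part (i) follows by the identical argument run in reverse time, with the forward orbit and $\omega$-limit set replaced by the backward orbit and the $\alpha$-limit set, and using the backward-time monotonicity noted at the outset.

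\textbf{Main obstacle.} The crux is Step 1: one must exclude that the orbit asymptotically approaches some level set $\{\rho=L\}\subset\overline{W}$ without ever crossing it. Lemma~\ref{lemma:distdecay} supplies only strict monotonicity, not a uniform positive lower bound on the rate of increase, and indeed no such bound is available near $\partial\Omega$, where $\nabla\rho$ need not exist. The $\omega$-limit argument circumvents this by transferring strict monotonicity to an invariant limit set on which $\rho$ is forced to be constant, producing the contradiction without any quantitative estimate on $\nabla\rho\cdot G$.
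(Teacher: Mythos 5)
Your proof is correct and follows essentially the same route as the paper: a limit-set argument (you use the $\omega$-limit set for forward time where the paper uses the $\alpha$-limit set for backward time) to get pointwise crossing of the level set, followed by a compactness argument for uniformity in $P$ (a finite subcover where the paper extracts a convergent subsequence). The two versions are interchangeable, and your observation that monotonicity plus invariance of the limit set replaces any quantitative lower bound on $\nabla\rho\cdot G$ is exactly the point of the paper's argument.
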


\begin{proof} We consider case (i), the argument for case (ii) being similar. Let us denote by $Z\subset U$ the
$\alpha$-limit set of the solution $S(t)P$. Since $S(\R)P$ is bounded (because all points outside a neighbourhood of the boundary are stationary points), $Z$ is a nonempty, compact, invariant 
set that attracts $P$ along the flow $S$ (see, for instance,   
\cite[Lemma 3.1.1]{hale}). 
\par We show first that the conclusion is true if we let $t^\varepsilon_-$ depend on $P$.
 We argue by contradiction and assume that the conclusion is
false, so that there exists a sequence  $t_k\to -\infty$ such that
$\rho(S(t_k)P)\ge -\varepsilon$. As $\rho(S(t)P)$ is increasing and bounded from
below we have that $\lim_{t\to-\infty} \rho(S(t)P)=l\ge -\varepsilon$ and $\rho(Q)=l$
for all $Q\in Z$. In particular $Z\subset \bar W$. As $Z$ is invariant, 
  for each $Q\in Z$ we also have $S(t)Q\in Z$ for all  $t\le 0$
 and $\rho(S(t)Q)=l$ for all $t\le 0$, which contradicts  Lemma~\ref{lemma:distdecay}. In order to prove that $t^\varepsilon_-$
can be chosen independent of $P$ we assume for contradiction that
this is not possible, so that there exist a sequence
$\{P_k\}_{k\in\mathbb{N}}\subset\overline W$ and a
corresponding sequence of times $\{t_k\}_{k\in\mathbb{N}}$ so that
 $\rho(S(t_k)P_k)=-\varepsilon$ and $t_k\to -\infty$. Using the compactness of
$\overline W$ we can find a subsequence $P_{k_l}\to
P_0\in\overline W$. But for $P_0$ there exists a time $t_0<0$
such that $\rho(S(t_0)P)<-\varepsilon$ and using the continuity with respect to
the initial data for the solution of the system (\ref{-G}) together with the fact that $\rho(S(t)P_k)$ is increasing, we obtain
a contradiction. \end{proof}

\section{Homeomorphically and $C^\infty$ diffeomorphically equivalent approximations of rough domains}
\label{section:diffeomorphism}

\par In this section we provide an application of the tools developed in the previous sections by showing that one can approximate
from the inside (and  also from the outside) domains $\Omega$ of class $C^0$ by smooth domains $\Omega'$ such that $\Omega$ and $\Omega'$ are $C^\infty$-diffeomorphic and their closures are homeomorphic.

\begin{theorem}
\label{homapprox}Let $\Omega\subset\R^m, m\geq 2$ be a bounded domain of
class $C^0$. Let $\rho$ be a regularized distance as given in Proposition~{\rm \ref{lemma:nonzero}} and for $\varepsilon\in\R$ define
\begin{equation}
\label{omegaep}
\Omega_\varepsilon=\{x\in\R^m:\rho(x)>\varepsilon\}.
\end{equation}
 There exists $\varepsilon_0=\varepsilon_0(\Omega)>0$ such that if $0<|\varepsilon|<\varepsilon_0$ then $\Omega_\varepsilon$ is a bounded domain of class
 $C^\infty$ and
 
{\rm (i)} $\bigcap_{-\varepsilon_0<\varepsilon<0}\Omega_\varepsilon=\overline\Omega,\; \bigcup_{\varepsilon_0>\varepsilon>0}\Omega_\varepsilon=\Omega$,\; $\bar\Omega_\eps\subset \Omega_{\eps'}\mbox{ if }-\eps_0<\eps'<\eps<\eps_0$.

{\rm(ii)}  For $0\leq|\varepsilon|<\varepsilon_0$ there exists a  homeomorphism $f(\varepsilon,\cdot)$ of $\R^m$ onto $\R^m$, with inverse denoted $f^{-1}(\varepsilon,\cdot)$, such that   
\begin{itemize}
  \item  $f(\varepsilon,\bar\Omega)=\overline\Omega_\varepsilon,\;f(\varepsilon,\partial\Omega)=\partial\Omega_\varepsilon$,
  \item   $f(\varepsilon,x)=x$ for $|\rho(x)|>3|\varepsilon|$ $($so that in particular $f(0,\cdot)=\,$identity$)$,
\item  $f(\varepsilon,\cdot):\R^m\setminus\partial\Omega\rightarrow \R^m\setminus\partial\Omega_\varepsilon$ is a   $C^\infty$ diffeomorphism.
\end{itemize}
Furthermore $f$ and $f^{-1}$ are  continuous functions of $(\varepsilon,x)$ for $0\leq|\varepsilon|<\varepsilon_0$, $x\in\R^m$, and $f$ $($resp. $f^{-1})$ is   a smooth function
of $(\varepsilon,x)$ for $0<|\varepsilon|<\varepsilon_0$, $x\in\R^m\setminus\partial\Omega$ $($resp. $0<|\varepsilon|<\varepsilon_0$, $x\in\R^m\setminus\partial\Omega_\varepsilon)$.

{\rm (iii)} 
 There exists a map $\bar f:(0,\varepsilon_0)\times (-\varepsilon_0,0)\times \R^m\to\R^m$ such that if $0<\varepsilon<\varepsilon_0, -\varepsilon_0<\varepsilon'<0$ then 
\begin{itemize}
  \item $\bar f(\varepsilon,\varepsilon', \cdot)$ is 
 a $C^\infty$ diffeomorphism  of $\R^m$ onto $\R^m$ with inverse $\bar f^{-1}(\varepsilon,\varepsilon', \cdot):\R^m\to\R^m$,  
\item $\bar f(\varepsilon, \varepsilon', \Omega_\varepsilon)= \Omega_{\varepsilon'}$, $\bar f(\varepsilon,\varepsilon', \partial\Omega_\varepsilon)=\bar f(\varepsilon, \varepsilon', \partial \Omega_{\varepsilon'})$,
\item   $\bar f(\varepsilon,\varepsilon',x)=x$ if $\rho(x)<3\varepsilon'$ or $\rho(x)>3\varepsilon$.
  \end{itemize}\item Furthermore $\bar f$ and $\bar f^{-1}$ are smooth functions of $(\varepsilon,\varepsilon', x)\in(0,\varepsilon_0)\times(-\varepsilon_0,0)\times\R^m$.
\end{theorem}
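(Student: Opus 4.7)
The strategy is to let the flow $S$ of canonical good directions drag each point across a prescribed $\rho$-distance, exploiting the strict monotonicity of $t\mapsto\rho(S(t)x)$ (Lemma~\ref{lemma:distdecay}) and the uniform reachability of small target levels on $\overline W$ (Lemma~\ref{lemma:limitgood}). Part (i) is essentially automatic: since $\rho\in C^\infty(\R^m\setminus\partial\Omega)$ and $|\nabla\rho|\neq 0$ in a neighbourhood of $\partial\Omega$, for all sufficiently small $|\eps|>0$ the level set $\{\rho=\eps\}$ is a smooth compact hypersurface and $\Omega_\eps$ is a bounded $C^\infty$ domain; the nesting $\bar\Omega_\eps\subset\Omega_{\eps'}$ for $\eps'<\eps$ and the set-theoretic limits come from the monotonicity of $\eps\mapsto\Omega_\eps$ and the equivalence $\tfrac12|d|\le|\rho|\le 2|d|$, which together fix the allowable size of $\eps_0$.

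For part (ii) I fix once and for all a smooth cutoff $\eta:\R\to\R$ with $\eta(0)=1$, $\supp\eta\subset[-3,3]$, and $\|\eta'\|_\infty<1$, and define the smooth profile $\sigma(\eps,s)\defeq s+\eps\,\eta(s/\eps)$. It satisfies $\sigma(\eps,0)=\eps$, $\sigma(\eps,s)=s$ for $|s|\ge 3|\eps|$, and $\partial_s\sigma=1+\eta'(s/\eps)>0$, so $s\mapsto\sigma(\eps,s)$ is a smooth bijection of $\R$ with $\sigma(\eps,s)-\eps$ having the same sign as $s$. For $|\eps|<\bar\eps/3$ and any $x\in\R^m$, Lemmas~\ref{lemma:distdecay}--\ref{lemma:limitgood} guarantee that the equation
\begin{equation*}
\rho(S(\tau)x)=\sigma(\eps,\rho(x))
\end{equation*}
admits a solution $\tau=\tau(\eps,x)$ that is unique in the strip $\{|\rho|<3|\eps|\}$ and may be set to $0$ outside it (where both sides coincide). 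Setting $f(\eps,x)\defeq S(\tau(\eps,x))x$ I read off $f(\eps,\partial\Omega)=\partial\Omega_\eps$, $f(\eps,\cdot)=$\,identity outside the strip, and $f(\eps,\bar\Omega)=\bar\Omega_\eps$ from the sign property of $\sigma-\eps$. The inverse $f^{-1}(\eps,y)=S(\tilde\tau(\eps,y))y$, where $\tilde\tau$ is determined by $\rho(S(\tilde\tau)y)=\sigma(\eps,\cdot)^{-1}(\rho(y))$, verifies $f^{-1}\circ f=\mathrm{id}$ via the group property of $S$ together with $\sigma(\eps,\cdot)^{-1}\circ\sigma(\eps,\cdot)=\mathrm{id}$; continuity is immediate from continuity of $\rho,\sigma,S$ and uniqueness of $\tau,\tilde\tau$. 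Finally, $C^\infty$ smoothness in $(\eps,x)$ for $\eps\neq 0,\,x\notin\partial\Omega$ comes from the implicit function theorem applied to the defining equation: Remark~\ref{remark:boundarycaserho} together with $\gamma\equiv 1$ on the relevant strip gives $\partial_t[\rho(S(t)x)]>0$ at $t=\tau(\eps,x)$.

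For part (iii) the same recipe works with a new profile $\theta(\eps,\eps',s)$, smooth in all arguments, equal to $s$ outside $[3\eps',3\eps]$, with $\theta(\eps,\eps',\eps)=\eps'$ and $\partial_s\theta>0$ throughout (a smooth monotone interpolation is available because $3\eps'<\eps'<0<\eps<3\eps$). I set $\bar f(\eps,\eps',x)\defeq S(T(\eps,\eps',x))x$ with $T$ determined by $\rho(S(T)x)=\theta(\eps,\eps',\rho(x))$; this yields the required bijection with the prescribed boundary and support behaviour, smooth in $(\eps,\eps',x)$ wherever $\rho$ is smooth, i.e.\ on $(0,\eps_0)\times(-\eps_0,0)\times(\R^m\setminus\partial\Omega)$.

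The main obstacle is upgrading this to smoothness of $\bar f$ at points of $\partial\Omega$, which lie in the interior of the strip $\{3\eps'<\rho<3\eps\}$ where $\rho$ is only Lipschitz. I overcome this by re-expressing $\bar f$ in flow coordinates based on the smooth cross-section $\Sigma=\partial\Omega_\eps$. Because $G\cdot\nabla\rho>0$ on $\Sigma$ (Remark~\ref{remark:nonnegative angle}), $G$ is transverse to $\Sigma$, so $(y,t)\mapsto S(t)y$ is a $C^\infty$ diffeomorphism of a neighbourhood of $\Sigma\times\{0\}$ in $\Sigma\times\R$ onto the whole strip (injectivity and global reach follow from strict monotonicity of $\rho\circ S(\cdot)y$ and Lemma~\ref{lemma:limitgood}). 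In these coordinates $\bar f$ becomes a pure time-shift $(y,t)\mapsto(y,\phi(\eps,\eps',y,t))$. The auxiliary times $t_\alpha(y)$ at which the orbit through $y\in\Sigma$ meets $\{\rho=\alpha\}$ are smooth in $y$ for each $\alpha\in\{\eps',3\eps',3\eps\}$ (since $\alpha\neq 0$ so $\rho$ is $C^\infty$ there), enabling an explicit smooth construction of $\phi$ with $\phi(y,0)=t_{\eps'}(y)$, $\phi(y,t)=t$ outside $(t_{3\eps'}(y),t_{3\eps}(y))$, and $\partial_t\phi>0$. Pulling this smooth shift back through the $C^\infty$ flow parametrisation produces $\bar f$ smooth on all of $\R^m$; $\bar f^{-1}$ is obtained by inverting $\phi$ in $t$, which is smooth since $\partial_t\phi>0$.
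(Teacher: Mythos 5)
There is a genuine gap in your part (ii), and it is exactly the pitfall the paper flags. Your single formula $f(\varepsilon,x)=S(\tau(\varepsilon,x))x$ with $\rho(S(\tau)x)=\sigma(\varepsilon,\rho(x))$ is applied to all $x\in\R^m$ and both signs of $\varepsilon$, and you claim smoothness for $\varepsilon\neq 0$, $x\notin\partial\Omega$ by the implicit function theorem. But the IFT needs the defining function to be smooth near the solution point, and $\rho$ is only Lipschitz on $\partial\Omega$. Since $\sigma(\varepsilon,\cdot)$ is an increasing bijection with $\sigma(\varepsilon,0)=\varepsilon$, there is an $s^*\neq 0$ with $\sigma(\varepsilon,s^*)=0$; for $\varepsilon<0$ this $s^*>0$, so every $x\in\Omega$ on the smooth level set $\{\rho=s^*\}$ has its image $f(\varepsilon,x)$ \emph{on} $\partial\Omega$, where $\rho(S(\tau)x)$ is merely Lipschitz in $\tau$ and the IFT is inapplicable (the same happens for $\varepsilon>0$ at exterior points with $\rho(x)=s^{**}<0$). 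The hitting time of the level $c(x)=\sigma(\varepsilon,\rho(x))$ has a kink as $c$ crosses $0$, so $\tau(\varepsilon,\cdot)$, and hence $f(\varepsilon,\cdot)$, is in general not $C^\infty$ across $\{\rho=s^*\}\subset\R^m\setminus\partial\Omega$. The inverse has the same defect: $f^{-1}(\varepsilon,y)=S(\tilde\tau(\varepsilon,y))y$ with $\rho(S(\tilde\tau)y)=\sigma(\varepsilon,\cdot)^{-1}(\rho(y))$ involves $\rho(y)$ itself, which is not differentiable at $y\in\partial\Omega$; but $\partial\Omega\subset\R^m\setminus\partial\Omega_\varepsilon$ is precisely where the theorem requires $f^{-1}(\varepsilon,\cdot)$ to be smooth. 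So the statement of (ii) is not established (and your map, as constructed, generally fails it).

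Ironically, you already possess the repair but do not use it for (ii). Your flow-box construction in part (iii) — parametrising the strip by $\Sigma\times\R$ via $(y,t)\mapsto S(t)y$ with $\Sigma=\partial\Omega_\varepsilon$ and using only hitting times of \emph{nonzero} levels, which are smooth across $\partial\Omega$ — is essentially the paper's strategy (their Lemma on the reparametrisation $h(a,c,t)$ plays the role of your time shift $\phi$, which you should construct explicitly with smooth dependence on $(\varepsilon,\varepsilon',y)$). The paper then deliberately builds $f(\varepsilon,\cdot)$ by your graph-shift recipe only in the regimes where the orbit segment never meets $\partial\Omega$ (interior map for $\varepsilon\ge 0$ on $\bar\Omega$, an analogous exterior map for $\varepsilon\le 0$ on $\Omega^c$), and obtains the remaining two regimes by composing these with $\bar f$ from (iii), e.g.\ $f(\varepsilon,x)=\bar f(-\varepsilon,\varepsilon,\tilde f(-\varepsilon,x))$ for $\varepsilon<0$, $x\in\bar\Omega$. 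Restructuring your argument in this way — restrict the profile-shift construction to the safe regimes and route the others through your part (iii) map — closes the gap; as written, part (ii) does not.
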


\begin{proof} We choose $\varepsilon_0>0$ small enough so that $3\varepsilon_0<\bar\varepsilon$, so that we can use in $\Omega_{3\varepsilon_0}$ all the constructions from the previous section. Conclusion (i) is then immediate. 

In order to prove (ii)   we begin by considering the problem of approximating $\Omega$ from the interior, and  construct the desired  function $f$ first just on $[0,\varepsilon_0)\times \overline\Omega$. Thus we assign to each $x\in \overline\Omega$ and $\varepsilon\in [0,\varepsilon_0)$ a value $f(\varepsilon,x)\in\overline\Omega_\varepsilon$ taken to be along the flow $S(\cdot)x$ defined in \eqref{-G}, starting at $x$.  However, since the flow  is defined to be non-stationary just in a neighbourhood of the boundary, we  take $f(\varepsilon,x)=x$ for $x$ far enough from the boundary. Thus we define
\begin{equation}
\label{jb1}
f(\varepsilon,x)=\left\{\begin{array}{ll} S(t(\varepsilon,x))x, & x\in \overline\Omega\setminus\Omega_{3\varepsilon}, \\
x,& x\in\Omega_{3\varepsilon},
\end{array}\right.
\end{equation}  for a $t(\varepsilon,x)$ to be determined, where $t(0,x)=0$ (so that $f(0,x)=x$). In order to define $t(\varepsilon,x)$ for $\varepsilon\in (0,\varepsilon_0)$ let $\tilde h:\R_+\to [0,1]$   be a smooth function such that $\tilde h\equiv 1$ on $[0,1]$,  $\tilde h\equiv 0$ on $[5/2,\infty)$ and $-1<h'(r)\leq 0$ for all $r\geq 0$. We take now $h(\varepsilon,r)\defeq \varepsilon\tilde h(\frac{r}{\varepsilon})$. Then 
$h(\varepsilon,\cdot)\equiv \varepsilon$ on $[0,\varepsilon]$ and $h(\varepsilon,\cdot)\equiv 0$ on $[5\varepsilon/2,\infty)$ with $-1<\frac{\partial h}{\partial r}(\varepsilon,r)\le 0$ for all $r\geq 0,\,\varepsilon\in (0,\varepsilon_0)$. For $x\in \overline\Omega\setminus\Omega_{3\varepsilon}$  and $\varepsilon\in (0,\varepsilon_0)$  define  $t(\varepsilon, x)$ to be the unique $t\ge 0$ such that                           
\begin{equation}\label{eq:tx}
\rho(S(t)x)=\rho(x)+h(\varepsilon,\rho(x)).                           
\end{equation}
\begin{figure}[h] \centering \def\svgwidth{300pt} \input{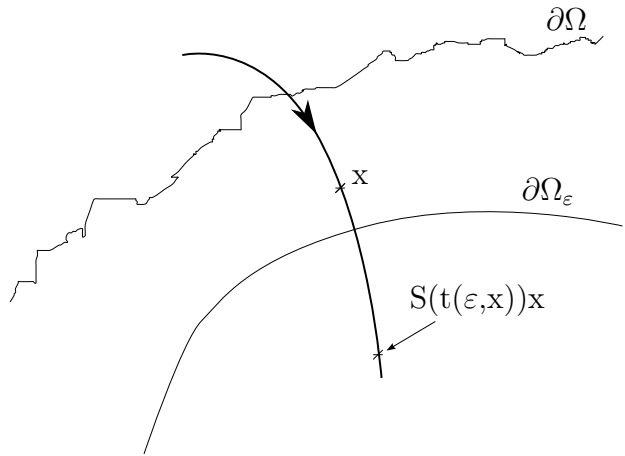}
\caption{Defining the diffeomorphism along the flow}
\end{figure}
We claim now that  $t(\varepsilon,x)$  is well defined. We denote $g(t)\stackrel{\rm def}{=}\rho(S(t)x)-\rho(x)-h(\eps,\rho(x))$. Then $g(0)\le 0$. But $\rho(S(\bar t)x)=3\varepsilon$ for some $\bar t\geq 0$ because of Lemma~\ref{lemma:limitgood} and the intermediate value theorem. Thus $g(\bar t)=3\varepsilon-\rho(x)-h(\eps,\rho(x))\geq 0$, since $\rho(S(\bar t)x)\geq\rho(x)$ and   $h(\varepsilon,3\varepsilon)=0$ together with  $\frac{\partial}{\partial\rho}(\rho+h(\eps,\rho))>0$. Finally  $g(t)$ is strictly increasing by Lemma \ref{lemma:distdecay}. This proves our claim regarding the definition of $t(\varepsilon,x)$. 
Note that the properties of $t$ imply that  $f(\varepsilon,\cdot):\partial\Omega\rightarrow\partial\Omega_\varepsilon$ and $f(\varepsilon,\cdot):\Omega\rightarrow\Omega_\varepsilon$.

\par We continue by claiming that $t$ is a smooth function   of   $(\varepsilon,x)$  in $(0,\varepsilon_0)\times \left(\Omega\setminus\bar\Omega_{3\varepsilon}\right)$. This follows from the implicit function theorem applied to 
\begin{equation}\label{eq:Fimplicit}
F(t,\varepsilon, x)\stackrel{\rm def}{=}\rho(S(t)x)-\rho(x)-  h(\varepsilon,\rho(x)) 
\end{equation} (the non-degeneracy condition needed for applying the implicit function theorem is a consequence of the relation
\begin{equation}\label{implicit:non-degeneracy}
\frac{d}{dt}\rho(S(t)x)=\nabla\rho(S(t)x)\cdot \frac{d}{dt} S(t)x=\left[(\nabla\rho\cdot G)\gamma\right](S(t)x)>0,
\end{equation}
where    for the equality we used the definition of the flow $S(\cdot)x$ and for the   last inequality we used   the definition of $G$ together with Remark~\ref{remark:nonnegative angle} with $x=P$).

Since $S(t)x,\,\rho(x),\,h(\varepsilon,\rho(x))$ are smooth  in   $t$, $x\in\Omega$  and $\varepsilon>0$ we deduce  that  $f$ is smooth on $(0,\varepsilon_0)\times\left(\Omega\setminus \overline\Omega_{3\varepsilon}\right)$. Since if $\rho(x)\in [5\varepsilon/2, 3\varepsilon]$ then $t(\varepsilon,x)=0$, and thus $f(\varepsilon,x)=x$,  it follows that $f$ is smooth in $(0,\varepsilon_0)\times \Omega$. 
 To show that $f:[0,\varepsilon_0)\times\overline\Omega\rightarrow\overline\Omega_\varepsilon$ is continuous it is enough to show that   $t(\cdot,\cdot)$ is continuous at points $(\eps,\tilde x)$ where $\eps\in[0,\eps_0)$ and $\tilde x\in\partial\Omega$.    Assume for contradiction that this is not the case, so that there exist   sequences $x_k\to \tilde x$, $x_k\in \overline\Omega$ and $\eps_k\to\eps$ in $[0,\eps_0)$ such that $t(\varepsilon_k,x_k)$ does not converge to $t(\varepsilon,\tilde x)$. By the uniformity in time in Lemma~\ref{lemma:limitgood}  we may assume that $t(\varepsilon_k,x_k)\to \tau\not=t(\varepsilon,\tilde x)$. We may also suppose without loss of generality that $\eps_k>0$ for all $k$.  Replacing $x$ with $x_k$ in \eqref{eq:tx} and passing to the limit $k\to\infty$ we obtain that $\rho\left(S(\tau)\tilde x\right)=\rho\left(S(t(\varepsilon,\tilde x))\tilde x\right)$, and  hence $\tau=t(\varepsilon,\tilde x)$, a contradiction which proves our assertion that $t$ is continuous up to the boundary.  
 
Next we check that $f(\varepsilon,\cdot)$ is one-to-one. Suppose $f(\varepsilon,x)=f(\varepsilon,y)$ for $x,y\in\overline\Omega$. 
 If $\rho(x)> 3\varepsilon$ and $\rho(y)> 3\varepsilon$ then $f(\varepsilon,x)=x,\,f(\varepsilon,y)=y$ and so $x=y$.
  If $\rho(x)\leq 3\varepsilon$ and $\rho(y)> 3\varepsilon$ then $\rho(f(\varepsilon,x))=\rho(x)+h(\varepsilon,\rho(x))\leq 3\varepsilon +h(\varepsilon,3\varepsilon)=3\varepsilon<\rho(y)=\rho(f(\varepsilon,y))$ so this case cannot occur.
 Finally if both $\rho(x)$ and $\rho(y)$ are in $[0,3\varepsilon]$ then we have $S(t(\varepsilon,x))x=S(t(\varepsilon,y))y$ and hence $\rho(x)=\rho(y)$. If $t(\varepsilon,x)=t(\varepsilon,y)$ there is nothing to prove, so we assume without loss of generality that $t(\varepsilon,x)>t(\varepsilon,y)$. Then $S(t(\varepsilon,x)-t(\varepsilon,y))x=y$, hence $\rho(x)<\rho(y)$ since $\rho(S(t)x)$ is strictly increasing in $t$, giving a contradiction.

We next show that  $f(\varepsilon,\cdot)$ is onto. This is obvious if $\eps=0$ so we suppose $\eps\in(0,\eps_0)$. To this end we take  an arbitrary $z\in\bar\Omega_\eps$ and seek $x\in\bar\Omega$ with $f(\eps,x)=z$. If $\rho(z)\ge 3\varepsilon$ then $f(\varepsilon,z)=z$, so we suppose $\rho(z)<3\varepsilon$. First note that by Lemma \ref{lemma:limitgood} there exists $\alpha(\varepsilon, z)\leq 0$ with $\rho(S(\alpha(\varepsilon, z))z)=0$. We look for $x$ of the form $x=S(\beta(\varepsilon, z))z$ with $\alpha(\varepsilon, z)\leq\beta(\varepsilon, z)\le 0$. Denoting
\begin{equation}
\label{jb2}
\bar g(\varepsilon,z,\tau)\defeq\rho(z)-\rho(S(\tau)z)-h(\varepsilon,\rho(S(\tau)z)),
\end{equation}
we have that  $\bar g(\varepsilon,z,0)\leq 0, \bar g(\varepsilon,z,\alpha(\varepsilon, z))=\rho(z)-\varepsilon\geq 0$. Since  $\bar g(\varepsilon,z,\tau)$ is strictly decreasing in $\tau$ it follows that there exists $\beta(\varepsilon,z)\in[\alpha(\varepsilon,z),0]$ with $\bar g(\varepsilon,z,\beta(\varepsilon,z))=0$, that is
$$\rho(S(-\beta(\varepsilon,z))S(\beta(\varepsilon, z))z)=\rho(S(\beta(\varepsilon, z))z)+h(\varepsilon,\rho(S(\beta(\varepsilon, z))z)).$$
Also, since $\beta(\varepsilon, z)\geq \alpha(\varepsilon, z)$ we have that $\rho(S(\beta(\varepsilon,z))z)\geq \rho(S(\alpha(\varepsilon, z))z)=0$,  so that $S(\beta(\varepsilon, z))z\in\overline\Omega$. Hence $t(\varepsilon,S(\beta(\varepsilon, z))z)=-\beta(\eps, z)$ and $f(\varepsilon,S(\beta(\varepsilon, z))z)=S(-\beta(\varepsilon, z))S(\beta(\varepsilon, z))z=z$. 
Hence $\bar f(\eps,\cdot):\bar\Omega\to\bar\Omega_\eps$ is onto, and so also $\bar f(\eps,\cdot): \Omega\to \Omega_\eps$ is onto. Furthermore, relation   \eqref{implicit:non-degeneracy} implies that $ \frac{\partial \bar g}{\partial \tau}(\varepsilon,z,\beta(\varepsilon, z))<0$, so that by the implicit function theorem  $\beta$ is a smooth function of $(\eps,z)$  for $z\in\Omega_\eps$ and $\varepsilon\in (0,\varepsilon_0)$. Therefore $f^{-1}(\varepsilon, z)=S(\beta(\varepsilon,z))z$ is also a smooth function of $(\varepsilon,z)\in (0,\varepsilon_0)\times \Omega_\varepsilon$. That $f^{-1}:[0,\eps_0)\times\bar\om_\eps\to\bar\om$ is continuous follows from the continuity of $f$. 

This completes the construction of the mapping $f=f(\eps,x)$ in (ii) for $\eps\in[0,\eps_0)$ and $x\in\bar\om$.  In particular, since $\Omega$ is by hypothesis connected, so is $\Omega_\varepsilon$ for $\eps\in[0,\eps_0)$.  To show that $\Omega_\varepsilon$ is of class $C^\infty$ for $\eps\in(0,\eps_0)$ let $\bar x\in\partial\Omega_\varepsilon$, so that $\rho(\bar x)=\varepsilon$. As $|\nabla\rho(\bar x)|\neq 0$ at least one of the partial derivatives $\frac{\partial\rho}{\partial x_i}(\bar x)$ is nonzero. Without loss of generality we may assume that $\frac{\partial\rho}{\partial x_m}(\bar x)>0$. By the implicit function theorem there exist $\delta>0$ and a function $\tilde f\in C^\infty(\R^{m-1})$ such that
$$\{x\in B(\bar x,\delta):\rho(x)=\varepsilon\}=\{(x',x_m)\in B(\bar x,\delta):x_m=\tilde f(x')\}.$$
Since for $\delta>0$ sufficiently small $\frac{\partial\rho}{\partial x_m}(x)>0$ for all $x\in B(\bar x,\delta)$ it follows that $\Omega_\varepsilon\cap B(\bar x,\delta)=\{(x',x_m)\in\Omega_\varepsilon:x_m>\tilde f(x')\}$ as required.

 Notice that   the same argument proves that there is a continuous mapping $F:(-\eps_0,0]\times\om^c\to\R^m$ such that if $\eps'\in(-\eps_0,0]$ then $F(\eps',\cdot)$ is a homeomorphism of $\om^c$ onto $\om_{\eps'}^c$ with  inverse $F^{-1}(\eps',\cdot):\om_{\eps'}^c\to\om^c$, such that $F(\eps',\cdot)$ is a $C^\infty$ diffeomorphism of $(\bar\om)^c$ onto $(\bar\om_{\eps'})^c$ if $\eps'\in(-\eps_0,0)$, and such that $F(\eps',x)=x$ if $\rho(x)<3\eps'$. Furthermore $F^{-1}:(-\eps_0,0]\times\om_{\eps'}^c\to\om^c$ is continuous, while $F:(-\eps_0,0)\times (\bar\om)^c\to(\bar\om_{\eps'})^c$ and  $F^{-1}:(-\eps_0,0)\times(\bar\om_{\eps'})^c\to(\bar\om)^c$ are smooth. In fact we can deduce this by applying the above to each of the finite number of  connected components of the bounded open set $\om_R=B(0,R)\setminus\bar\om$ for a large $R$ with $\bar\om\subset B(0,R)$, noting that each such component is a bounded domain of class $C^0$, and redefining the mapping to be the identity in a suitable neighbourhood of $\partial B(0,R)$. We will use this observation below when extending the definition of $f$ to the whole of $(-\eps_0,\eps_0)\times\R^m$.

\medskip
In order to   define $f$ in (ii) on $(-\eps_0,0]\times\bar\om$ we cannot proceed  in exactly the same way as we did to define $f$ on $[0,\eps_0)\times\bar\om$   via an analogue of the definition \eqref{jb1}, because we would then not be able to prove smoothness via the implicit function theorem at points on, or with images on,  $\partial\Omega$ (since $\rho$ is not smooth there). Instead we proceed by first proving  (iii), from which the extension of $f$ to the whole of $(-\eps_0,\eps_0)\times\R^m$ will follow easily.

We make use of the following lemma.
\begin{lemma}
\label{smoothfunction}
There exists a smooth function $h:(0,1)\times(0,1)\times\R\to\R$ satisfying:
\begin{eqnarray*}
h(a,c,t)=t&&\mbox{for all } a,c\in(0,1), t\not\in[0,1],\\
\frac{\partial h}{\partial t}(a,c,t)>0&&\mbox{for all }a,c\in(0,1), t\in\R,\\
h(a,c,a)=c.&&
\end{eqnarray*}.
\end{lemma}
\begin{proof}
Let $\varphi\in C_0^\infty(\R),\,\varphi\geq 0,\,\varphi(t)=\varphi(-t)$ for all $t$, $\supp\varphi\subset(-1,1)$, $\int_{-\infty}^\infty\varphi(t)\,dt=1$. Define $\delta=\delta(a,c)$ by 
$$\delta(a,c):=\frac{1}{4}a(1-a)c(1-c),$$
and 
$$H(a,c,t):=\left\{\begin{array}{ll}t&\mbox{if }t\not\in[\delta,1-\delta],\\
\delta+(t-\delta)\frac{c-2\delta}{a-2\delta}&\mbox{if }t\in[\delta,a-\delta],\\
t+c-a&\mbox{if }t\in[a-\delta,a+\delta],\\
c+\delta+(t-a-\delta)\frac{1-c-2\delta}{1-a-2\delta}&\mbox{if }t\in[a+\delta,1-\delta].
\end{array}\right.
$$
Note that $\min(a,1-a,c,1-c)>4\delta$, so that the continuous piecewise affine function $H(a,c,\cdot)$ is well defined and has strictly positive slope. We claim that 
\begin{eqnarray*}
h(a,c,t)&:=&\int_{-\infty}^\infty\frac{2}{\delta}\phi\left(\frac{2(t-s)}{\delta}\right)H(a,c,s)\,ds\\&=&\int_{-\infty}^\infty\varphi(\sigma)H(a,c,t+\frac{\delta\sigma}{2})\,d\sigma
\end{eqnarray*}
has the required properties. First, splitting the range of integration into  the different parts in which $H(a,c,\cdot)$ is affine, we see that $h(a,c,t)$ is the sum of five  integrals each having the form
$$\int_{p_i(a,c,t)}^{q_i(a,c,t)}\varphi(\sigma)\theta_i(a,c,t,\sigma)\,d\sigma,$$
where, for each $i=1,\ldots,5$, $p_i,q_i$ are  smooth functions of $(a,c,t)$ in the set $D:=(0,1)^2\times\R$, and $\theta_i$ is smooth on $(0,1)^2\times\R^2$, so that $h$ is smooth on $D$. Since
$$h_t(a,c,t)=\int_{-\infty}^\infty \varphi(\sigma)H_t(a,c,t+\frac{\delta\sigma}{2})\,d\sigma$$
we have that $h_t(a,c,t)>0$. If $t\not\in(\frac{\delta}{2},1-\frac{\delta}{2})$ then $t+\frac{\delta}{2}\not\in(\delta,1-\delta)$ for $|\sigma|<1$, so that, since $\varphi(t)$ is even, $h(a,c,t)=t$. Similarly 
$$h(a,c,a)=\int_{-1}^1\varphi(\sigma)(c+\frac{\delta\sigma}{2})\,d\sigma=c.$$
\end{proof}
Continuing with the proof of (iii), given any $x\in\R^m$ with $|\rho(x)|<\eps_0$ and any $\tau\in(-\eps_0,\eps_0)$, there exists a unique $t=t(\tau,x)$ such that $\rho(S(t)x)=\tau$. Note that if also $|\rho(S(\sigma)x)|<\eps_0$ then
\begin{equation}
\label{timetranslate}
t(\tau, S(\sigma)x)=t(\tau,x)-\sigma.
\end{equation}Furthermore $t(\tau,x)$ is smooth in $\tau\ne 0$ and $x$, and $\frac{d}{d\tau}t(\tau,x)>0$ for $\tau\neq 0$. For such $x$ and $\eps\in(0,\eps_0), \eps'\in(-\eps_0,0)$, define
\begin{equation}\label{alphabeta}
\alpha(\eps,\eps',x)=\frac{1}{t(2\eps,x)-t(2\eps',x)},\;\;\beta(\eps,\eps',x)=-\frac{t(2\eps',x)}{t(2\eps,x)-t(2\eps',x)}.
\end{equation}
Note that by \eqref{timetranslate}, if $y=S(\sigma)x$ and $|\rho(x)|, |\rho(y)|<\eps_0$ then
\begin{equation}
\label{alphabeta1}
\alpha(\eps,\eps',x)=\alpha(\eps,\eps',y),\;\; \beta(\eps,\eps',x)=\beta(\eps,\eps',y) -\sigma\alpha(\eps,\eps',y).
\end{equation}
We define the desired   $\bar f$ in (iii) by 
\begin{equation}\label{jb5}
\bar f(\varepsilon,\varepsilon',x)=\left\{\begin{array}{ll}S(\eta(\varepsilon,\varepsilon',x))x,& \mbox{if }\rho(x)\in (3\eps',3\eps),\\
x,&\mbox{otherwise},\end{array}\right. 
\end{equation}
where 
\begin{equation}\label{sub1}
\eta(\eps,\eps',x)=\alpha(\eps,\eps',x)^{-1}[h(r(\varepsilon,\varepsilon',x),s(\varepsilon,\varepsilon',x),\beta(\varepsilon,\varepsilon',x))-\beta(\varepsilon,\varepsilon',x)],
\end{equation}
$h$ is as in Lemma \ref{smoothfunction} and
\begin{eqnarray}r(\varepsilon,\varepsilon',x)&=&\alpha(\varepsilon,\varepsilon',x)t(\eps,x)+\beta(\varepsilon,\varepsilon',x),\label{sub2}\\
s(\varepsilon,\varepsilon',x)&=&\alpha(\varepsilon,\varepsilon',x)t(\eps',x)+\beta(\varepsilon,\varepsilon',x).\label{sub3}
\end{eqnarray}
Note that  $r(\varepsilon,\varepsilon',x),s(\varepsilon,\varepsilon',x)\in(0,1)$ so that $\bar f(\varepsilon,\varepsilon',x)$ is well defined. Also, from \eqref{alphabeta1}, if $y=S(\sigma)x$ and $|\rho(x)|, |\rho(y)|<\eps_0$ then
\begin{equation}\label{rs}
r(\eps,\eps',x)=r(\eps,\eps',y),\;\;s(\eps,\eps',x)=s(\eps,\eps',y).
\end{equation}
Furthermore, if $\rho(x)\geq 2\eps$ then $t(2\eps,x)\leq 0$ and so $\beta(\eps,\eps',x)\geq 1$. From the properties of $h$ we thus have $\eta(\eps,\eps',x)=0$ and $\bar f(\eps,\eps',x)=x$. Similarly $\eta(\eps,\eps',x)=0$ and $\bar f (\eps,\eps',x)=x$ if $\rho(x)\leq 2\eps'$.

To prove that $\bar f(\eps,\eps',\cdot)$ is one-to-one, suppose that $\bar f(\eps,\eps',x)=\bar f(\eps,\eps',y)$. If both $\rho(x), \rho(y)\not \in (2\eps',2\eps)$ then clearly $x=y$. If $\rho(x)\not\in (2\eps',2\eps)$ and $\rho(y)\in (2\eps',2\eps)$ then $t(\eps,y)\in(t(2\eps',y),t(2\eps,y))$. Hence $\beta(\eps,\eps',y)\in(0,1)$, thus $\alpha(\eps,\eps',y)\eta(\eps,\eps',y)+\beta(\eps,\eps',y)\in(0,1)$, from which it follows that  $\eta(\eps,\eps',y)\in(t(2\eps',y),t(2\eps,y))$. Hence $\rho(\bar f(\eps,\eps',y))\in (2\eps',2\eps)$ and so $\bar f(\eps,\eps',y)\neq x=\bar f(\eps,\eps',x)$. So this case cannot occur.

If both $\rho(x), \rho(y)\in (2\eps',2\eps)$ then $y=S(\sigma)x$, where $\sigma=\eta(\eps,\eps',x)-\eta(\eps,\eps',y)$. Let $\gamma= \rho(\bar f(\eps,\eps',x))=\rho(\bar f(\eps,\eps',y))$. Then  $\eta(\eps,\eps',x) = t(\gamma,x), \eta(\eps,\eps', y)=t(\gamma, y)$. Thus  by \eqref{timetranslate}, \eqref{alphabeta1}  $$\alpha(\eps,\eps',x)\eta(\eps,\eps',x)+\beta(\eps,\eps',x)=\alpha(\eps,\eps',y)\eta(\eps,\eps',y)+\beta(\eps,\eps',y).$$   But from \eqref{sub1}, \eqref{rs} this implies that 
$$h(r(\eps,\eps',x), s(\eps,\eps',x), \beta(\eps,\eps',x))=h(r(\eps,\eps',x), s(\eps,\eps',x), \beta(\eps,\eps',y)),$$
which implies by the strict monotonicity of $h(a,c,t)$ in $t$  that $\beta(\eps,\eps',x)=\beta(\eps,\eps',y)$ and hence $\sigma =0$ and $x=y$.

To prove that $\bar f(\eps,\eps',\cdot)$ is onto it suffices to show that if $\rho(y)\in (3\eps',3\eps)$ then $S(\eta(\eps,\eps',x))x=y$ for some $x$, and we claim that such an $x$ with $\rho(x)\in (3\eps',3\eps)$ is given by
\begin{equation}\label{fbarinverse}
x=S(\tau(\eps,\eps',y))y;\;\;\tau(\eps,\eps',y)=\alpha(\eps,\eps',y)^{-1}[h^{-1}(r(\varepsilon,\varepsilon',y),s(\varepsilon,\varepsilon',y),\beta(\eps,\eps',y))- \beta(\eps,\eps',y)],
\end{equation}
where $h^{-1}(a,c,\cdot)$ denotes the inverse function of $h(a,c,\cdot)$. To show that $\rho(x)\in(3\eps',3\eps)$ it suffices to prove that $\tau(\eps,\eps',y)\in(t(3\eps',y),t(3\eps,y))$. But this holds because $\alpha(\eps,\eps',y)t(3\eps,y)+\beta(\eps,\eps',y)>1$, $\alpha(\eps,\eps',y)t(3\eps',y)+\beta(\eps,\eps',y)<0$, using  $h_t(a,c,t)>0$  and the fact that $h(a,c,t)=t$ for $t\not \in(0,1)$. For $x$ given by \eqref{fbarinverse} we deduce from \eqref{alphabeta1} that 
\begin{equation}
\nonumber
\beta(\eps,\eps',x)=\beta(\eps,\eps',y)+\tau(\eps,\eps',y)\alpha(\eps,\eps',y)= h^{-1}(r(\varepsilon,\varepsilon',y),s(\varepsilon,\varepsilon',y),\beta(\eps,\eps',y)),
\end{equation}
so that 
\begin{equation}\nonumber
\eta(\eps,\eps',x)=\alpha^{-1}(\eps,\eps',y)[\beta(\eps,\eps',y)-\beta(\eps,\eps',x)]=-\tau(\eps,\eps',y)
\end{equation}
 as required. 

Next we note that $\rho(x)>\eps$ if and only if $t(\eps,x)<0$, which holds if and only if
$$h(\alpha(\eps,\eps',x)t(\eps,x)+\beta(\eps,\eps',x),\alpha(\eps,\eps',x)t(\eps',x)+\beta(\eps,\eps',x),\beta(\eps,\eps',x))>\alpha(\eps,\eps',x)t(\eps',x)+\beta(\eps,\eps',x),$$
since $h(a,c,a)=c$ and $h_t(a,c,t)>0$, thus if and only if $\rho(S(\eta(\eps,\eps',x))x)>\eps'$. Thus $\bar f(\eps,\eps',\Omega_\eps)=\Omega_{\eps'}$ and  $\bar f(\eps,\eps',\partial\Omega_\eps)=\partial\Omega_{\eps'}$. That $\bar f, \bar f^{-1}$ are smooth functions of $(\eps,\eps',x)\in (0,\eps_0)\times(-\eps_0,0)\times \R^m$ follows from  the smoothness of $\alpha,\beta,r,s,h$ and $h^{-1}$ (the latter by $h_t>0$ and the inverse function theorem) together with \eqref{fbarinverse}. This completes the proof of (iii).

To complete the proof of the theorem, let us temporarily denote the map $f:[0,\eps_0)\times \bar\Omega\to\R^m$ constructed at the beginning of the proof by $\tilde f$. We need to extend $\tilde f$ to a map $f:(-\eps_0,\eps_0)\times\R^m\to\R^m$ satisfying (ii). We define $f=f(\eps,x)$ by $f(0,x)=x$ and 
\begin{equation}\label{fdefn}
f(\eps,x)=\left\{\begin{array}{ll}\tilde f(\eps,x)&\mbox{if }\eps\in(0,\eps_0), x\in \bar\Omega,\\
\bar f^{-1}(\eps,-\eps,F(-\eps,x))&\mbox{if }\eps\in(0,\eps_0), x\in \Omega^c,\\
\bar f(-\eps,\eps,\tilde f(-\eps,x))&\mbox{if }\eps\in(-\eps_0,0), x\in\bar\Omega,\\
F(\eps,x)&\mbox{if }\eps\in(-\eps_0,0),x\in \Omega^c.
\end{array}\right.
\end{equation}
Note that the domains of definition of $f(\eps,x)$ overlap for $x\in\partial\Omega$. However the definitions coincide there because by construction in each case $f(\eps,x)$ lies on the intersection of the orbit of the flow of good directions through $x$ with $\partial\Omega_\eps$, and this point is unique. The properties of $\tilde f, \bar f$ and $F$ imply that $f(\eps,\cdot)$ is a homeomorphism of $\R^m$ onto $\R^m$ with $f(\eps,\Omega)=\Omega_\eps, f(\eps,\partial\Omega)=\partial\Omega_\eps$, with inverse given by
 $f^{-1}(0,x)=x$ and 
\begin{equation}\label{finverse}
f^{-1}(\eps,x)=\left\{\begin{array}{ll}\tilde f^{-1}(\eps,x)&\mbox{if }\eps\in(0,\eps_o), x\in\bar\Omega_\eps,\\
F^{-1}(-\eps,\bar f(\eps,-\eps,x))&\mbox{if }\eps\in(0,\eps_0), x\in \Omega^c_\eps,\\
\tilde f^{-1}(-\eps,\bar f^{-1}(-\eps,\eps,x))&\mbox{if }\eps\in (-\eps_0,0), x\in\bar\Omega_\eps,\\
F^{-1}(\eps,x)&\mbox{if }\eps\in (-\eps_0,0), x\in\Omega^c_\eps.
\end{array}\right.
\end{equation}
The continuity of $f(\eps,x)$ and $f^{-1}(\eps,x)$  for $0\leq|\eps|<\eps_0, x\in \R^m$ follows since, as is easily checked, $f(\eps,-\eps,x)\to x, f^{-1}(\eps,-\eps,x)\to x$ uniformly as $\eps\to 0+$. The smoothness of $f(\eps,x)$ for $0<|\eps|<\eps_0, x\not \in\partial\Omega$, and of $f^{-1}(\eps,x)$ for $0<|\eps|<\eps_0, x\not \in\partial\Omega_\eps$, follows from the corresponding properties of $\tilde f, \bar f$ and $F$. Finally, by construction $f(\eps,x)=x$ for $|\rho(x)|>3|\eps|$.

This completes the proof.
\end{proof}
\begin{remark}
\label{smoothclosures}\rm
The above proof uses part (iii) of the theorem to help prove part (ii). Conversely, given (ii), for $-\eps_0<\eps'<0<\eps<\eps_0$ we can define $f(\eps,\eps',x)=f(\eps',f^{-1}(\eps,x))$, which is a homeomorphism of $\R^m$ onto $\R^m$ such that $f(\eps,\eps',\Omega_\eps)=\Omega_{\eps'}$, $f(\eps,\eps',\partial\Omega_\eps)=\partial\Omega_{\eps'}$, $f(\eps,\eps',\cdot):\Omega_\eps\to\Omega_{\eps'}$ is a diffeomorphism, and $f(\eps,\eps', x)=x$ if $|\rho(x)|>\max (-3\eps',3\eps)$. However (iii) gives extra information, in particular that $f(\eps,\eps',\cdot)$ is a diffeomorphism of $\bar\Omega_\eps$ onto $\bar\Omega_{\eps'}$.
\end{remark}
\begin{remark}\label{genapprox}{\rm 
As observed by Fraenkel \cite[Section 5]{fraenkelc0} the image under a diffeomorphism of a bounded domain of class $C^0$ need not be a domain of class $C^0$, since a cusp such as in Remark \ref{cusp}   can be bent by the diffeomorphism so that the boundary is not locally a graph. However Theorem \ref{homapprox} immediately implies a corresponding smooth approximation result for the larger class of bounded domains $\Omega\subset\R^m$ which are the image under a $C^\infty$ diffeomorphism $\varphi:U\rightarrow\R^m$ of a bounded domain $\Omega'\subset\R^m$ of class $C^0$, where $U$ is an open neighbourhood of $\overline{\Omega'}$. If $\Omega_\varepsilon', 0<|\varepsilon|<\varepsilon_0,$ are the approximating domains given by the theorem for $\Omega'$ for $\varepsilon_0>0$ sufficiently small, then the open sets $\Omega_\varepsilon=\varphi(\Omega_\varepsilon')$ are a family of bounded domains of class $C^\infty$ such that $\bigcap_{-\varepsilon_0<\varepsilon<0}\Omega_\varepsilon=\bar\Omega, \bigcup_{0<\varepsilon<\varepsilon_0}\Omega_\varepsilon=\Omega$.}
\end{remark}
 
\begin{remark}\label{topman}\rm If $\Omega$ is Lipschitz, then the homeomorphism between $\overline\Omega_\varepsilon$ and $\overline\Omega$ defined in the 
proof of Theorem~\ref{homapprox} is a bi-Lipschitz map (with Lipschitz constants bounded independently of $\varepsilon$ for $0<|\varepsilon|<\varepsilon_0$). In order to check this it suffices to show that the functions $t$ and $\beta$ (for the interior approximation, say)  are Lipschitz. This can be seen in the case of $t$, for example,  by applying $\frac{\partial}{\partial x_i}$ to $F(t(x),x)=0$ with $F$ as in   \eqref{eq:Fimplicit},  obtaining thus  that $\frac{\partial t}{\partial x_i}=- \frac{\partial F}{\partial x_i}/\frac{\partial F}{\partial t}$. Given $x_0\in\partial\Omega$ with a corresponding good direction $n$, there exist $\delta=\delta(x_0)$ and $c_0(\delta), c_1(\delta)$ such that $0<c_0(\delta)<\nabla\rho(x)\cdot n,\; |\nabla\rho(x)|<c_1(\delta)$ for all $x\in B(x_0,\delta)\cap\Omega$  (see 
\cite[p. 63, relation (A.3), Lemma A.1 and the line after (A.7)]{lieberman2}). We can then apply compactness and Remark \ref{remark:nonnegative angle} to show that for some $\delta_1>0$ and constants $c_1, c_2$ depending only on $\delta_1$ we have $0<c_0<\nabla\rho(x)\cdot G(x),  |\nabla\rho(x)|\leq c_1$ for all $x\in\Omega$ with ${\rm dist}\,(x,\partial\Omega)<\delta_1$. Hence $\frac{\partial F(t(x),x)}{\partial t}$ is bounded away from zero, and $\frac{\partial F(t(x),x)}{\partial x_i}$ is bounded, for all $x\in \Omega$ with ${\rm dist}\,(x,\partial\Omega)<\delta_1$. Thus $|\nabla f(x)|$ is bounded for such $x$ and hence for all $x\in \Omega$. Hence $f\in W^{1,\infty}(\Omega,\R^m)$. Since $\Omega$ is Lipschitz this implies that $f$ is Lipschitz. This follows, for example, by noting that by Stein \cite[Chapter VI, Theorem 5]{stein} $f$ may be extended to a function $\tilde f\in W^{1,\infty}(\R^m,\R^m)$ so that $\tilde f$ is Lipschitz (a more general result can be found in \cite[Theorem 4.1]{heinonen}). 

  Related results concerning the approximation of Lipschitz domains were obtained in \cite{necas-paper, verchota-thesis, verchota-JFA} \rm (for the last two papers see the comments in the introduction). The recent paper of Amrouche, Ciarlet \& Mardare \cite{amroucheetal}, which refers to an earlier version of our paper, contains a statement (their Theorem 2.2) of a special case of Theorem \ref{homapprox} for $\Omega$ Lipschitz.
\end{remark}
\begin{remark}\label{topman1}\rm 
Theorem \ref{homapprox} implies in particular that if $\Omega\subset\R^m$ is a bounded domain of class $C^0$, then $\bar\Omega$ and $\Omega^c$ have differential structures making them $C^\infty$ $m$-manifolds with boundary, and $\partial\Omega$ has a differential structure making it a $C^\infty$ $(m-1)-$manifold, since $\bar\Omega$ (resp. $\Omega^c$, $\partial\Omega$) is homeomorphic to $\bar\Omega_\eps$ (resp. $\Omega_\eps^c$, $\partial\Omega$) for $0<|\eps|<\eps_0$, which is such a $C^\infty$ manifold. The field of good directions can be thought of as a transverse field to $\partial\Omega$, and more generally the existence of a transverse field to a topological manifold embedded in $\R^m$ is known to imply the existence of a smooth differential structure following the work of Cairns \cite{cairns} and Whitehead \cite{whitehead} (see also Pugh \cite{Pugh}). 

A referee has  drawn our attention to the possibility of using the techniques of the theory of smoothing of manifolds to prove the existence of diffeomorphic interior and exterior approximations by $C^\infty$  domains for the  wider class of bounded domains $\Omega\subset\R^m$ whose boundaries are locally flat in the sense of Brown \cite{brown}, that is $\bar\Omega$ and $\Omega^c$ are topological manifolds with boundary (one advantage of this class of domains is that it is invariant under homeomorphisms). Indeed this seems likely to be the case for $m\geq 6$ making use of Kirby-Siebenmann theory \cite{Kirby-essay}, and for $m=2,3$ using the classical results on smoothing 2- and 3-manifolds of   Rad\'o \cite{rado} and Moise \cite{moise} (see Hamilton \cite{hamilton} and Hatcher \cite{hatcher1} for different treatments). However, we are not aware of explicit results of this type in the literature. We do not pursue this interesting direction further here. While applying to a more restricted set of domains, Theorem \ref{homapprox} is valid in all dimensions, with a relatively simple dimension-independent proof, and gives a semi-explicit representation of the approximating domains together with a potentially useful technique for varying domains through the flow of good directions. 

 Theorem \ref{homapprox} gives the existence of a homeomorphism $f(\eps,\cdot)$ of $\bar\Omega$ onto $\bar\Omega_\eps$ for $0<|\eps|<\eps_0$ that is a diffeomorphism of $\Omega$ onto $\Omega_\eps$. In general, if $U\subset \R^m, V\subset\R^m$ are homeomorphic open sets, then if $m=1,2,3$ it is known that $U$ and $V$ are diffeomorphic, while if $m\geq 5$ and $U$ is the whole of $\R^m$ (or equivalently $U$ is an open ball) then $U$ and $V$ are also diffeomorphic, since in these cases $U$ has a unique differential structure up to diffeomorphism. These results are due to \cite{moise}, \cite{munkres}, \cite{stallings} and are surveyed in \cite{milnor2011}. This is not true if $m=4$ because of the existence (following from the work of Freedman \cite{freedman} and Donaldson \cite{donaldson}) of `small exotic $\R^4$s', which implies that there is a bounded open subset of $\R^4$ which is homeomorphic but not diffeomorphic to an open ball in $\R^4$;  for discussions of this work see \cite{scorpan}, and  \cite[Chapter XIV]{kirby}. Whether two arbitrary homeomorphic open subsets of $\R^m$, $m\geq 5$, are diffeomorphic seems not to be known in general. 
\end{remark}

\begin{remark}\label{remark:arbitrary omega} \rm For an arbitrary open set $\Omega\subset\R^m$ with  $\bar\Omega\neq\R^m$ one can easily find smooth sets approximating it and contained in $\Omega$ (respectively $\R^m\setminus\bar\Omega$), provided that one does not require the approximating sets to preserve the topology or diffeomorphism class  of $\Omega$. For instance, as shown in \cite[Chapter 6]{stein} one can always find a regularised distance $\tilde\rho$ and then for almost all $\varepsilon$ small enough the sets $\Omega_\varepsilon$ defined as in \eqref{omegaep} (but with $\rho$ replaced by $\tilde\rho$) will be of class $C^\infty$ (due to Sard's theorem, see for instance \cite[p. 35]{mazya}).
\end{remark}

We now discuss counterexamples to the conclusions of Theorem \ref{homapprox} when $\Omega$ is not of class $C^0$. It is easy to construct such examples for the exterior approximation.
\begin{example}
\label{exteriorex}\rm
Let $\Omega\subset\R^2$ be the simply-connected domain defined by $$\Omega=B(0,1)\setminus\overline{ B((0,1/2)),1/2)}.$$

Then $\Omega$ is not of class $C^0$ because the boundary cannot be represented as a graph in the neighbourhood of the boundary point $(0,1)$. We claim that there is no decreasing sequence of bounded domains $\Omega^{(j)}\subset\R^2, j=1,2,\ldots,$ which are each homeomorphic to $\Omega$ and such that $\bar \Omega=\bigcap_{j=1}^\infty\Omega^{(j)}$. Indeed, for any such sequence we have that $S^1=\partial B(0,1)\subset \Omega^{(j)}$ for all $j$, but $(0,1/2)\not\in\Omega^{(j)}$ for sufficiently large $j$, so that $\Omega^{(j)}$ is not simply-connected for sufficiently large $j$.
\end{example}

In order to give  counterexamples for the interior approximation we will consider bounded open subsets   $\Omega\subset\R^m$ which are topological manifolds with boundary (strictly speaking, such that  $\bar\Omega$ is a topological manifold with boundary), i.e. each point $x\in\partial \Omega$ has a neighbourhood in $\bar \Omega$ that can be mapped homeomorphically onto a relatively open subset of $H^m=\{(x_1,\ldots,x_m)\in\R^m: x_m\geq 0\}$. Any bounded domain of class $C^0$ is a topological manifold with boundary (see, for example, \cite[Appendix A]{ballzarnescu}), but not conversely; for example, the interior of a Jordan curve in $\R^2$ is a topological manifold with boundary by the Schoenflies theorem.
\begin{proposition}
Let $\Omega\subset\R^m$ be a bounded open set   that is a topological manifold with boundary. Let $\Omega'\subset\R^m$ be a bounded open set that is homeomorphic to $\Omega$. Then $\partial\Omega$ has a finite number $N$ of boundary components, and $\partial\Omega'$ has a finite number $N'$ of boundary components, where $N'\leq N$.
\end{proposition}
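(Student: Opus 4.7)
My plan is to compare the components of $\partial\Omega$ and $\partial\Omega'$ through the space of ends of the two homeomorphic open sets. Since $\overline{\Omega}$ is a compact topological manifold with boundary, $\partial\Omega$ is a compact $(m-1)$-manifold without boundary, and therefore has only finitely many connected components; call this number $N$. By Brown's collar theorem, $\partial\Omega$ admits an open neighbourhood $U$ in $\overline{\Omega}$ homeomorphic to $\partial\Omega\times[0,1)$. Setting $K_0:=\overline{\Omega}\setminus U$, a compact subset of $\Omega$, one has $\Omega\setminus K_0\cong\partial\Omega\times(0,1)$, which has exactly $N$ components. A standard argument using this collar identifies the Freudenthal ends of $\Omega$ with the components of $\partial\Omega$, so $\Omega$ has exactly $N$ ends. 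Because ends are a topological invariant, $\Omega'$ also has exactly $N$ ends.

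Next I would fix a compact exhaustion $K_1\subset K_2\subset\cdots$ of $\Omega'$ and, for each end $e$ of $\Omega'$, let $C_n(e)$ denote the component of $\Omega'\setminus K_n$ representing $e$. I define the ``limit set'' of $e$ by
$$
L(e)\;:=\;\bigcap_{n\geq 1}\overline{C_n(e)}\;\subset\;\overline{\Omega'}.
$$
Each $\overline{C_n(e)}$ is a nonempty compact connected subset of the compact set $\overline{\Omega'}$ and the sequence is decreasing, so $L(e)$ is nonempty, compact and connected. Moreover, any $x\in\Omega'$ lies in the interior of some $K_n$, so it has a neighbourhood disjoint from $C_{n+1}(e)$; hence $L(e)\subset\partial\Omega'$.

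The main step---and the one I expect to require the most care---will be to show that every $y\in\partial\Omega'$ belongs to some $L(e)$. Given such $y$ I would choose $x_j\in\Omega'$ with $x_j\to y$. Since $K_n$ is compact in $\Omega'$, we have $\dist(K_n,\partial\Omega')>0$ for each $n$, while $\dist(x_j,\partial\Omega')\to 0$; thus for each $n$, $x_j\notin K_n$ for all $j$ large, and in fact $x_j$ cannot lie in any component of $\Omega'\setminus K_n$ whose closure is contained in $\Omega'$, so $x_j\in C_n(e)$ for some end $e=e(j,n)$. Since there are only $N$ ends, after passing to a subsequence I may assume $x_j\in C_1(e)$ for a single end $e$ and all $j$, and the nesting $C_{n+1}(e)\subset C_n(e)$ then forces $x_j\in C_n(e)$ for every $n$ and all $j$ sufficiently large. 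Consequently $y\in\overline{C_n(e)}$ for every $n$, i.e.\ $y\in L(e)$. The delicate point is to eliminate the ``inessential'' components of $\Omega'\setminus K_n$ (those with compact closure in $\Omega'$) from consideration, which is handled by noting that such components stay at positive distance from $\partial\Omega'$ while the $x_j$ approach it.

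Each $L(e)$ is connected, hence contained in a single component of $\partial\Omega'$, and by the previous step $\partial\Omega'=\bigcup_e L(e)$. There are only $N$ ends, so $\partial\Omega'$ has at most $N$ components; this gives $N'\leq N$ and in particular $N'<\infty$.
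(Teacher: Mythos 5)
Your route is genuinely different from the paper's. The paper also reduces the finiteness of $N$ to the local chart structure and also invokes Brown's collaring theorem, but instead of passing through the abstract invariant ``number of ends'' it transports the collar directly: it sets $U_{i,\varepsilon}=\psi_i(\Gamma_i\times(0,\varepsilon))$, pushes these connected sets forward by the homeomorphism $\varphi$, and takes the Hausdorff limit $V_i$ of the decreasing compact connected sets $\overline{\varphi(U_{i,\varepsilon_j})}$ as $\varepsilon_j\to 0$. Since the $N$ sets $\varphi(U_{i,\varepsilon})$ automatically cover a deleted neighbourhood of $\partial\Omega'$ inside $\Omega'$, every point of $\partial\Omega'$ is a limit of points lying in one fixed $\varphi(U_{i,\varepsilon_j})$ after a single subsequence extraction, and no analysis of the components of $\Omega'\setminus K_n$ is needed. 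Your sets $L(e)$ play exactly the role of the $V_i$, but because you only carry the \emph{number} of ends across $\varphi$ rather than explicit connected neighbourhoods of each end, you must rebuild the geometric picture inside $\Omega'$ from scratch, and that is where your write-up has two gaps.

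First, the assertion that for large $j$ the point $x_j$ cannot lie in a component of $\Omega'\setminus K_n$ with compact closure in $\Omega'$ is not justified by noting that each such component individually has positive distance to $\partial\Omega'$: there may be infinitely many of them, and a priori their distances to $\partial\Omega'$ could tend to $0$. You need the (standard, but not free) lemma that all but finitely many components of $\Omega'\setminus K_n$ are contained in $K_{n+1}$ --- every component $C$ not contained in $K_{n+1}$ satisfies $\emptyset\neq\partial C\cap\Omega'\subset K_n$, hence meets ${\rm int}\,K_{n+1}$ as well as its complement and therefore meets the compact set $\partial K_{n+1}\subset\Omega'\setminus K_n$, which only finitely many disjoint components can do by local connectedness of $\Omega'$. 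Only with this does the union of the relatively compact components stay at a uniform positive distance from $\partial\Omega'$. Second, the final extraction is logically reversed: nesting gives $C_n(e)\subset C_1(e)$, so $x_j\in C_1(e)$ does not force $x_j\in C_n(e)$; if $C_1(e)$ contains several ends, the $x_j$ may distribute among the deeper components $C_n(e')$ of those other ends. The repair is a diagonal subsequence: for each $n$ pick, among the finitely many essential components of $\Omega'\setminus K_n$, one $D_n$ containing infinitely many $x_j$, note $D_{n+1}\subset D_n$, and observe that $(D_n)$ defines an end $e$ with $y\in L(e)$. With these two repairs your proof is correct; the price of the extra generality of the ends formulation is precisely these technicalities, which the paper's direct transport of the collar avoids.
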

\begin{proof}
We first claim that $\partial\Omega$ has a finite number $N$ of boundary components $\Gamma_i, 1\leq i\leq N$. If there were infinitely many then there would be a point $\xi\in\partial\Omega$ and a sequence $\xi_i\to \xi$ with $\xi_i\in \Gamma_i, i=1,2,\ldots,\infty$.  Since $\Omega$ is a topological manifold with boundary, there   exist $\delta>0$ and a homeomorphism $\psi: B(\xi,\delta)\cap\bar\Omega\to V$, where $V$ is a relatively open subset of $H^m$. Then for $\bar\delta>0$ sufficiently small, $\psi^{-1}(B(\psi(\xi),\bar\delta)\cap\partial H^m)$ is a connected open neighbourhood of $\xi$ in $\partial\Omega$ to which $\xi_i$ belongs for sufficiently large $i$, a contradiction.

By the collaring theorem of Brown \cite{brown} (for an alternative proof see \cite{connelly}),  for each $i=1,\ldots, N$ there is a homeomorphism $\psi_i$ mapping  $\Gamma_i\times [0,1)$ onto a relatively open neighbourhood $U_i$ of $\Gamma_i$ in $\bar\Omega$ such that $\psi_i(x,0)=x$ for all $x\in\Gamma_i$. Consider the open subset $U_{i,\varepsilon}=\psi_i(\Gamma_i\times (0,\varepsilon))$ of $\Omega$. Since the product of connected sets is connected, and $\psi_i$ is continuous, $U_{i,\varepsilon}$ is connected.

 By assumption there is a  homeomorphism $\varphi:\Omega\to\Omega'$. Therefore $\varphi(U_{i,\varepsilon})$ is connected. Hence $\overline{\varphi(U_{i,\varepsilon})}$ is a closed connected subset of $\overline{\Omega'}$. Let $\varepsilon_j\to 0$. We may assume that the sets $\overline{\varphi(U_{i,\varepsilon_j})}$ converge to a subset $V_i$ of $\overline{\Omega'}$ in the Hausdorff metric, and $V_i$ is connected. We claim that $V_i\subset\partial\Omega'$. Indeed if $z\in \Omega'\cap V_i$ there would exist  a sequence $x_k\in\Omega$ with $x_k\to\bar x\in\Gamma_i$ and $\varphi(x_k)\to z$. But then $x_k\to \varphi^{-1}(z)\in\Omega$, a contradiction.  Hence for each $i$ we have a corresponding connected subset $V_i$ of $\partial\Omega'$. We claim that $\partial\Omega'\subset\bigcup_{i=1}^NV_i$. Indeed suppose that $z\in \partial\Omega'$. Then there exists a sequence $z_k\in \Omega'$ with $z_k\to z$. Then $\varphi^{-1}(z_k)\in\Omega$ for each $k$, and we may assume that $\varphi^{-1}(z_k)\to x\in\bar\Omega$. If $x\in\Omega$ then $z_k\to\varphi(x)=z\in\Omega'$, a contradiction. Hence $\varphi^{-1}(z_k)\to x\in\partial\Omega$, and so $x\in\Gamma_i$ for some $i$. So there exists a subsequence $z_{k_j}$ with $\varphi^{-1}(z_{k_j})\in U_{i,\varepsilon_j}$, and hence $z\in V_i$. Thus $\partial\Omega'$ has $N'$ components for some $N'\leq N$.
\end{proof}
\begin{remark}\rm In general $N'<N$. For example if $\Omega$ consists of the union of two balls with disjoint closures while $\Omega'$ consists of two disjoint open balls whose boundaries touch, then we have $N=2$ and $N'=1$. It is perhaps true that $N'=N$ if $\Omega$ is connected, but we do not need this.
\end{remark}
\begin{example}
\label{interiorex}\rm Let 
$$\Omega=B(0,1)\setminus \bigcup_{j=1}^\infty \overline{B((1-{1}/{j})e_1,1/{3j^2})}\subset\R^m,$$
where $e_1=(1,0,\ldots,0)$. Then $\Omega$ is a bounded domain, but $\partial\Omega$ has infinitely many boundary components, and thus $\Omega$ cannot be homeomorphic to a bounded domain of class $C^\infty$ (or of class $C^0$).
\end{example}

\section{The topology  of $\Omega$  and the properties of the map of good directions}

\medskip In Section~\ref{good} we constructed special smooth fields of good directions, that we called canonical. In this section we study the properties of arbitrary  continuous fields of good directions that are not necessarily canonical.
\par We start with an illustrative case that   provides significant insight into more general situations. We consider a standard solid torus in $\R^3$ given by $\Omega_T=T([0,2\pi]\times [0,2\pi]\times [0,1))$, where
 $$T:[0,2\pi]\times [0,2\pi]\times [0,1)\to\R^3,\quad T(\theta,\varphi,r)=(\cos\theta\,(2+r\cos\varphi),\sin\theta\,(2+r\cos\varphi), r\sin\varphi),$$ whose boundary is  $\partial\Omega_T=\To\defeq T_b([0,2\pi]^2)$, where
\be\label{def:T_b}
T_b:[0,2\pi]\times [0,2\pi]\to\R^3, \quad T_b(\theta,\varphi)=(\cos\theta\,(2+\cos\varphi),\sin\theta\,(2+\cos\varphi), \sin\varphi).
\ee
A continuous field $C:\To\to\Sph^2$  is a field of pseudonormals with respect to $\Omega_T$ if and only if  
\be\label{cond:geom}
C(P)\cdot \nu_{\To} (P)>0\textrm{ for all }P\in \To, 
\ee where $\nu_{\To}(P)$ denotes the interior normal to $\Omega_T$ at $P\in\To$. The geometrical condition \eqref{cond:geom} imposes a constraint on the image of the field $C$:
\begin{proposition}\label{prop:goodtorus}
 For any continuous field  $C:\To\to\Sph^2$ satisfying the geometrical condition \eqref{cond:geom}  there exists a band  of size $2\delta$ around the equator on the unit sphere, namely 
\be 
E_\delta\defeq\{ n\in\Sph^2:  |n \cdot e_3| <\delta\},
\ee such that $E_\delta\subset C(\To)$, where $e_3=(0,0,1)$.
 
 Conversely, for any given $\gamma>0$ there exists a continuous field $C_{\gamma}:\To\to\Sph^2$ satisfying  \eqref{cond:geom}   such that $C_{\gamma}(\To)\subset E_{\gamma}$.
\end{proposition}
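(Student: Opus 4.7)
The plan is to handle the converse and the direct assertion separately: the converse by explicit construction and the direct part by a meridian winding argument supplemented by a connectedness step.

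For the converse, I will exhibit $C_\gamma$ explicitly. Pick $0<\gamma'<\min(\gamma,\pi/2)$, set $\chi(\varphi)=-\gamma'\sin\varphi$, and define
\[
C_\gamma(T_b(\theta,\varphi)) = \bigl(-\cos(\theta-\varphi)\cos\chi(\varphi),\,-\sin(\theta-\varphi)\cos\chi(\varphi),\,\sin\chi(\varphi)\bigr).
\]
Then $C_\gamma$ is a continuous unit vector field and $|C_\gamma\cdot e_3|=|\sin\chi|\le\sin\gamma'<\gamma$, so $C_\gamma(\To)\subset E_\gamma$. Using the identity $\cos\theta\cos(\theta-\varphi)+\sin\theta\sin(\theta-\varphi)=\cos\varphi$ one finds
\[
C_\gamma\cdot\nu_{\To} = \cos^2\varphi\,\cos(\gamma'\sin\varphi) + \sin\varphi\,\sin(\gamma'\sin\varphi),
\]
which is strictly positive for $\gamma'<\pi/2$, verifying \eqref{cond:geom}. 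This direction is essentially a calculation.

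For the direct assertion, the key tool is a meridian winding identity. Fix $\theta\in[0,2\pi)$, set $e_\theta=(\cos\theta,\sin\theta,0)$, and note that $\nu_{\To}(T_b(\theta,\varphi))=-\cos\varphi\,e_\theta-\sin\varphi\,e_3$. Condition \eqref{cond:geom} then reads $F_\theta(\varphi)\cdot(\cos\varphi,\sin\varphi)<0$ where
\[
F_\theta(\varphi):=\bigl((C\cdot e_\theta)(T_b(\theta,\varphi)),\,(C\cdot e_3)(T_b(\theta,\varphi))\bigr)\in\R^2.
\]
First I will check that $F_\theta(\varphi)\neq 0$: otherwise $C=\pm e_{\theta+\pi/2}$, which is orthogonal to $\nu_{\To}$ along the $\theta$-meridian, contradicting \eqref{cond:geom}. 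Thus $F_\theta:S^1\to\R^2\setminus\{0\}$, and since its image is confined to a half-plane that rotates once with $\varphi$, $F_\theta$ itself winds once around the origin. Hitting the positive first-axis ray produces, for every $\theta$, some $\varphi^*(\theta)$ with $C(T_b(\theta,\varphi^*(\theta)))$ horizontal and in the half-equator $\{v\cdot e_\theta>0\}$, hence equal to $e_{\theta+\alpha(\theta)}$ for some $\alpha(\theta)\in(-\pi/2,\pi/2)$.

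The main obstacle is to upgrade this per-$\theta$ information into the inclusion $E_\delta\subset C(\To)$. My plan is to consider the closed set
\[
B := \bigl\{(\theta,\varphi)\in\To : (C\cdot e_3)(T_b(\theta,\varphi))=0,\ (C\cdot e_\theta)(T_b(\theta,\varphi))>0\bigr\},
\]
whose projection to the $\theta$-circle is surjective by the winding argument. A compactness-and-connectedness argument on the universal cover $\R\to\R/2\pi\mathbb{Z}$ should produce a connected component of $B$ whose $\theta$-projection is the entire $\theta$-circle; along such a component, $\alpha(\theta)$ can be selected continuously, and the lift $\tilde\theta\mapsto\tilde\theta+\alpha(\tilde\theta)$ on $\R$ has bounded oscillation and is therefore surjective, yielding every horizontal direction in $C(\To)$. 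Finally, \eqref{cond:geom} forces $(C\cdot e_3)(T_b(\theta,-\pi/2))>0$ and $(C\cdot e_3)(T_b(\theta,\pi/2))<0$ for every $\theta$, supplying image points uniformly above and below the equator; combined with the equator inclusion, a compactness argument upgrades this to an open band $E_\delta\subset C(\To)$. The two delicate points, which will need the most care, are ruling out that every component of $B$ has $\theta$-projection contained in a proper arc, and the final upgrade from the one-dimensional equator to a two-dimensional open band.
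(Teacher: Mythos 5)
Your construction for the converse is correct and complete: one checks directly that $C_\gamma\cdot\nu_{\To}=\cos^2\varphi\,\cos(\gamma'\sin\varphi)+\sin\varphi\,\sin(\gamma'\sin\varphi)$, a sum of two nonnegative terms that never vanish simultaneously when $0<\gamma'<\pi/2$, and $|C_\gamma\cdot e_3|\le\sin\gamma'<\gamma$. This is a genuinely different (and arguably slicker) construction than the paper's, which instead modifies the Gauss map only in two thin bands $\varphi\in(\pi/2-\gamma,\pi/2+\gamma)\cup(3\pi/2-\gamma,3\pi/2+\gamma)$ by rotating the normal about the vertical axis (Lackenby's idea); your field tilts and twists globally via the phase $\theta-\varphi$. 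Both verify the same inequality, so nothing is lost.

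The direct part, however, has a genuine gap, and it sits exactly where you place it. Your meridian winding argument is correct as far as it goes: $F_\theta\neq 0$, $F_\theta/|F_\theta|$ is homotopic to $-(\cos\varphi,\sin\varphi)$ and hence has degree one, so every meridian carries a point where $C$ is equatorial and outward-facing. But the two steps you defer are not deferrable. First, the claim that some connected component of $B$ projects onto the whole $\theta$-circle does not follow from surjectivity of the projection: a compact subset of an annulus can meet every fiber while each of its components projects onto a proper arc (two overlapping arcs at different heights already do this), so you would need to extract this from the winding data itself, which is essentially a two-dimensional degree statement in disguise; and even granted a spanning component, a connected compact set admits no continuous section of the projection in general, so the ``continuous selection of $\alpha(\theta)$'' is a further unsupported step. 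Second, and more seriously, the final upgrade is not a compactness argument: knowing that $C(\To)$ contains the whole equator together with points strictly above and strictly below it does not yield any band $E_\delta$ --- the image could a priori be the equator union a single meridian arc, which is compact, connected, contains points on both sides, and contains no $E_\delta$. The missing idea is the paper's: restrict to the outer half $\omega=T_b([0,2\pi]\times((0,\pi/2)\cup(3\pi/2,2\pi]))$, where the Gauss map has degree $1$ at every $n\neq\pm e_3$, note that $C(\partial\omega)$ avoids some $E_\delta$ because $\nu_{\To}=\mp e_3$ on $\partial\omega$, and transport the degree from $\nu_{\To}$ to $C$ by the convex-combination homotopy permitted by \eqref{cond:geom}. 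This gives $d(C,\omega,n)=1$ simultaneously for every $n\in E_\delta$, which is what produces the full two-dimensional band rather than just the equator. Your fiberwise winding is the one-dimensional shadow of that computation, but it cannot be globalized by connectedness and compactness alone.
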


\begin{proof}
In order to prove the first claim we use degree theory for the map $C$, on a domain $\omega\subset\To$ with respect to the point $n\in\Sph^2$, denoted $d(C,\omega,n)$. Since the domain $\omega$ we use is diffeomorphic to a bounded open subset of $\R^2$ we can use the theory of degree for subsets of Euclidean space as described, for example, in \cite{fonsecadegree}. It suffices to   show that $d(C,\omega,n)=1$ for all $n\in E_\delta$ with suitable $\delta>0$ and $\omega\subset\To$ (because this implies that $E_\delta\subset C(\omega)$). In order to show this we use the homotopy invariance of the degree \cite[Theorem 2.3]{fonsecadegree} for the homotopy 
$H:[0,1]\times\omega\to\Sph^2$ defined by  
$$H(\lambda,P)=\frac{\lambda C(P)+(1-\lambda)\nu_{\To}(P)}{|\lambda C(P)+(1-\lambda)\nu_{\To}(P)|}$$ 
that connects the smooth negative Gauss map $\nu_{\To}$ with the field $C$. 
We choose 
$$\omega\stackrel{\rm def}{=}T_b([0,2\pi]\times\{(0,\pi/2)\cup(3\pi/2,2\pi]\})$$ 
to be the `exterior part' of the torus. We first note that $d(\nu_{\To},\omega,n)=1$ for all $n\in\Sph^2\setminus \{\pm e_3\}$. This is because the Jacobian of $\nu_{\To}$ equals the Gaussian curvature of the torus which is positive in $\omega$, and because such $n$ do not belong to $\nu_{\To}(\partial\omega)$ and have exactly one inverse image  in $\omega$ under $\nu_{\To}$. Next observe that the condition $C(P)\cdot \nu_{\To}(P)>0$ and the continuity of $C$ ensures that there exists a $\delta>0$ so that $C(P)\not\in E_\delta$ for $P\in\partial\omega$. Thus for $n\in E_\delta$ the condition $n\not\in \{H(\lambda,P),\lambda\in [0,1],P\in\partial\omega\}$ is satisfied and we can apply the homotopy invariance of the degree to conclude that  $d(C,\omega,n)=1$ for $n\in E_\delta$, and  hence $E_\delta\subset C(\To)$.  

\bigskip

  To prove the second claim of the proposition we follow a suggestion of an anonymous referee, which provides a simpler example than in our original approach. We   denote by $\tilde n(\theta,\varphi)\in\Sph^2$ the interior normal at $T_b(\theta,\varphi)$ (where $T_b$ was defined in \eqref{def:T_b}). We let $t:[0,2\pi]\to \Sph^2$ be given by $t(\theta):=(-\sin\theta,\cos\theta,0)$. Then $t(\theta)\cdot \tilde n(\theta,\varphi)=0$ for all $(\theta,\varphi)\in [0,2\pi]^2$. Defining $p_\varepsilon:[0,2\pi]^2\to\Sph^2$  for $\eps\in(0,1)$ by
  
  $$p_\varepsilon(\theta,\varphi):=\frac{(1-\varepsilon)t(\theta)+\varepsilon \tilde n(\theta,\varphi)}{\sqrt{1-2\varepsilon+2\varepsilon^2}}$$
    we have $|p_\varepsilon(\theta,\varphi)|=1$ for all $(\theta,\varphi)\in [0,2\pi]^2$ and $p_\varepsilon\cdot \tilde n=\frac{\varepsilon}{\sqrt{1-2\eps+\eps^2}}$,  so that $p_\varepsilon$ is a pseudonormal field. On the other hand $|p_\varepsilon\cdot e_3|=\left|\frac{\varepsilon\sin\varphi}{\sqrt{1-2\varepsilon+2\varepsilon^2}}\right|\leq\frac{\varepsilon}{\sqrt{1-2\varepsilon+2\varepsilon^2}}$.  Hence for any $\gamma<1$ there exists $\varepsilon$ such that  $p_\varepsilon (\To)\subset E_\gamma$.
  \end{proof}
 We now explore what happens for more general bounded domains.

\begin{theorem}\label{prop:surjectivity}
Let $\Omega\subset\R^m$ be a bounded domain of class $C^0$.
\smallskip\par {\rm (i)} If $m\ge 3$,  and the Euler characteristic of $\Omega$ is non-zero then any  continuous pseudonormal field $n:\partial\Omega\to\Sph^{m-1}$ is surjective.
 \par If  $m=3$ and  the Euler characteristic of $\Omega$ is zero then a continuous pseudonormal field is not necessarily surjective.\label{lemma:surjectivity}
 \smallskip\par {\rm (ii)} If $m=2$ any continuous pseudonormal field $n:\partial\Omega\to\Sph^1$ is surjective.
\end{theorem}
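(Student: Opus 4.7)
The strategy is to transfer the continuous pseudonormal field from $\partial\Omega$ to the smooth boundary $\partial\Omega_\eps$ of the approximating domain of Theorem~\ref{homapprox}, where the classical Gauss map is available and whose degree can be computed topologically via the Hopf index theorem. Let $G:U\to\Sph^{m-1}$ be a canonical smooth field of good directions (Proposition~\ref{smapp}) and let $n:\partial\Omega\to\Sph^{m-1}$ be the given continuous pseudonormal field. For each $P\in\partial\Omega$, both $n(P)$ and $G(P)$ are good directions at $P$, and by taking the smaller of the two associated radii they are good directions with respect to a common ball; Lemma~\ref{geoconv} then gives $\lambda n(P)+(1-\lambda)G(P)\neq 0$ for every $\lambda\in[0,1]$, the modulus being uniformly bounded away from zero by compactness of $\partial\Omega\times[0,1]$. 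Hence the spherical straight-line homotopy continuously deforms $n$ into $G|_{\partial\Omega}$.

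Next I fix $\eps>0$ small enough that the closed tubular region $\{|\rho|\leq\eps\}$ lies in $U$ and use the flow $S(\cdot)$ of canonical good directions to move $G|_{\partial\Omega}$ to $G|_{\partial\Omega_\eps}$. With $t(\eps,\cdot)$ as in the proof of Theorem~\ref{homapprox}, the map $(\sigma,P)\mapsto G(S(\sigma\,t(\eps,P))P)$, $\sigma\in[0,1]$, $P\in\partial\Omega$, continuously interpolates between $G|_{\partial\Omega}$ and $G\circ f(\eps,\cdot)$, and the homeomorphism $f(\eps,\cdot):\partial\Omega\to\partial\Omega_\eps$ identifies this latter map with $G|_{\partial\Omega_\eps}$. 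On the smooth hypersurface $\partial\Omega_\eps=\{\rho=\eps\}$ the inward Gauss map is $\nu=\nabla\rho/|\nabla\rho|$, and Remark~\ref{remark:nonnegative angle} gives $G(P)\cdot\nabla\rho(P)>0$ there, so a further spherical straight-line homotopy deforms $G|_{\partial\Omega_\eps}$ into $\nu|_{\partial\Omega_\eps}$. Chaining these homotopies, $n$ and $\nu|_{\partial\Omega_\eps}$ (with the manifolds identified via $f(\eps,\cdot)$) carry the same topological degree.

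To compute this common degree I invoke the classical theorem of Hopf: for the smooth compact manifold with boundary $\overline{\Omega_\eps}\subset\R^m$, the degree of the Gauss map $\nu:\partial\Omega_\eps\to\Sph^{m-1}$ equals $\pm\chi(\Omega_\eps)$, the sign depending on orientation conventions and on the choice between inward and outward normal. Since Theorem~\ref{homapprox} gives a homeomorphism $\overline\Omega\to\overline{\Omega_\eps}$ we have $\chi(\Omega_\eps)=\chi(\Omega)$, so when $m\geq 3$ and $\chi(\Omega)\neq 0$ the common degree is nonzero, and a continuous map of nonzero degree between closed oriented $(m-1)$-manifolds of the same dimension is necessarily surjective; this establishes~(i). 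The non-surjective example needed for $m=3$ with $\chi(\Omega)=0$ is precisely the one constructed in Proposition~\ref{prop:goodtorus}. For~(ii), each boundary component of $\partial\Omega_\eps$ is a smooth Jordan curve on which the Gauss map has degree $\pm 1$ by the rotation-index theorem, so the same chain of homotopies yields degree $\pm 1$ for $n$ restricted to each component of $\partial\Omega$, hence surjectivity onto $\Sph^1$.

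The principal obstacle is ensuring that the spherical straight-line homotopies remain well-defined, i.e.\ that the two interpolated fields are never antipodal: this is exactly what Lemma~\ref{geoconv} delivers in the first step (both $n(P)$ and $G(P)$ are good directions at $P$ with respect to a common ball) and what the strict transversality $G(P)\cdot\nabla\rho(P)>0$ of Remark~\ref{remark:nonnegative angle} delivers in the last. The other essential input is the Hopf index theorem identifying $\deg(\nu)$ with $\chi(\Omega)$, which provides the classical bridge between the topology of $\Omega$ and the degree of an arbitrary continuous pseudonormal field.
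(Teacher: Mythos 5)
Your proposal is correct and follows essentially the same route as the paper: homotope the pseudonormal field to the canonical field $G$, pass to the smooth level set $\partial\Omega_\eps$, homotope to the Gauss map using the transversality $G\cdot\nabla\rho>0$, and invoke the Hopf degree theorem (resp.\ the Umlaufsatz for $m=2$) together with the torus example of Proposition \ref{prop:goodtorus}. The only difference is technical: the paper approximates $n$ by a good-direction field $\tilde n$ defined on a whole neighbourhood of $\partial\Omega$ and runs a contradiction argument so that degrees are only ever computed on the smooth hypersurface $\partial\Omega_\eps$, whereas you perform the first homotopy on the merely topological manifold $\partial\Omega$ itself, which requires the (standard, homological) notion of degree there.
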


\begin{proof} (i) We first prove that if the Euler characteristic of $\Omega$ is non-zero then $n:\partial\Omega \to\Sph^{m-1}$ is surjective. We assume for contradiction that $n$ is not surjective. Thus there exists a  $\delta>0$ small enough so that any continuous field $\bar n:\partial\Omega\to\Sph^{m-1}$ such that
$\|\bar n-n\|_{C(\partial\Omega)}<\delta$ is also not surjective. We claim that there exists a continuous field of good directions  $\tilde n$ defined on a neighbourhood $V$ of $\partial\Omega$ so that $\|\tilde n-n\|_{C(\partial\Omega)}<\delta/2$.
 \par In order to prove the claim we first note that as $n:\partial\Omega\to\Sph^{m-1}$ is continuous and $\partial\Omega$ is bounded, there exists  $\delta'>0$  such that
 \be
 |n(x)-n(y)|<\frac{\delta}{4}\textrm{ for all } x,y\in\partial\Omega \textrm { with } |x-y|<\delta'.
 \label{ncont}
 \ee
\par As $\Omega$ is of class $C^0$ there exist some $\bar\delta>0$ and points $P_1,\dots, P_l\in\partial\Omega$ such that $\partial\Omega\subset\cup_{j=1}^l B_{\bar\delta}(P_j)$, and such that for any $j=1,2,\dots,l$ and $R\in B_{2\bar\delta (P_j)}$ we have that $n(P_j)$ is a good direction with respect to $\Omega$ at $R$. We assume, without loss of generality, that $8\bar\delta<\delta'$.
 \par Recalling the definition \eqref{omegaep}  of the sets $\Omega_\eps$ and from \eqref{equivdist} that  $\frac{1}{2}\le\frac{\rho(x)}{d(x)}\le 2\textrm{ for all } x\in\R^m$, we have that $\Omega_{-\bar\delta/2}\setminus \Omega_{\bar\delta/2}\subset \cup_{j=1}^{l}B_{2\bar\delta}(P_j)$. Consider a partition of unity $\alpha_j,j=1,\dots, l$, such that $\alpha_j\in C_0^\infty(B_{4\bar\delta}(P_j))$ and $\Sigma_{j=1}^l
\alpha_j(x)=1,\textrm{ for all } x\in\cup_{j=1}^l B_{2\bar\delta}(P_j)$.  Let
$$\hat n(P)\defeq\sum_{j=1}^l\alpha_j(P) n(P_j).$$
Then, by Lemma \ref{ggd}, $\hat n(P)\neq 0$ and $\frac{\hat n(P)}{|\hat n(P)|}$ is a good direction at $P$ for any $P\in \Omega_{-\bar\delta/2}\setminus \Omega_{\bar\delta/2}$.  
 Taking into account that $\alpha_j(P)=0$ for $|P-P_j|>4\bar\delta$,  that $4\bar\delta<\delta'$, and \eqref{ncont} we obtain
\be\label{nnhat}
|n(P)-\hat n(P)|\le \sum_{j=1}^l \alpha_j(P)|n(P)- n(P_j)| \le \frac{\delta}{4}\textrm{ for all }P\in\partial\Omega.
\ee 
On the other hand, for $P\in \partial\Omega$ we have
\begin{align}
\left|\hat n(P)-\frac{\hat n(P)}{|\hat n(P)|}\right|=||\hat n(P)|-1|\le \left|\sum_{j=1}^l \alpha_j(P)n(P_j)-\sum_{j=1}^l\alpha_j(P)n(R)\right|\nonumber\\
=\left|\sum_{j\in\mathcal{J}_P}\alpha_j(P)(n(P_j)-n(R))\right|\le \sum_{j\in\mathcal{J}_P}\alpha_j(P)|n(P_j)-n(R)|<\frac{\delta}{4},\label{nnhat1}
\end{align} where $\mathcal{J}_P\stackrel{\rm def}{=}\{j\in\{1,2,\dots,l\}; |P-P_j|\le 4\bar\delta\}$ and $R=P_i$ for some arbitrary $i\in\mathcal{J}_P$. For the last inequality we used that $|R-P_j|\le 8\bar\delta\le\delta',\textrm{ for all } j\in\mathcal{J}_P$ and \eqref{ncont} together with  $\sum_{j\in\mathcal{J}_P}\alpha_j(P)=1$.
We take $\tilde n(P)\stackrel{\rm def}{=}\frac{\hat n(P)}{|\hat n(P)|}$ on  $V\defeq\Omega_{-\bar\delta/2}\setminus\Omega_{\bar\delta/2}$ and \eqref{nnhat}, \eqref{nnhat1}   prove our claim about the existence of $\tilde n$. 
\par We denote by $n_\eps(P)$   the interior normal to $\partial\Omega_\eps$ at $P$, so that $-n_\eps:\partial\Omega_\eps\to \Sph^{m-1}$ is the Gauss map; note that  $n_\eps$ is parallel to $\nabla\rho(P)$ and has the same degree as $-n_\eps$  up to (possible) change of sign. As noted above $\tilde n|_{\partial\Omega}$ is not surjective. 
Hence, as $\tilde n$ is continuous on $V$, there exists  $\eps_1>0$ so that $\Omega_{-\eps_1}\setminus\Omega_{\eps_1}\subset V$ and $\tilde n|_{\overline{\Omega_{-\eps_1}\setminus\Omega_{\eps_1}}}$ is also not surjective.
 Moreover, by Theorem~\ref{homapprox} the sets $\Omega_\eps$ and $\Omega$ are homeomorphic and thus they have the same Euler characteristic \cite{hatcher}. On the other hand for the smooth domain $\Omega_\eps$ the Euler characteristic equals the degree of the Gauss map (\cite[p. 384]{bredon}) and hence the Gauss map has non-zero degree. For any $P\in\partial\Omega_\eps$ we have that both $\tilde n(P)$ and $n_\eps(P)$ are good directions at $P$, so that by Lemma \ref{geoconv} we have that   $\tilde n(P)\cdot n_\eps(P)\not=-1$. The  homotopy $h:[0,1]\times\partial\Omega_\eps\to\Sph^{m-1}$ connecting $\tilde n|_{\partial\Omega_\eps}$ and $n_\eps$ given by
 $$h(t,P)=\frac{t\tilde n(P)+(1-t)n_\eps (P)}{|t\tilde n(P)+(1-t)n_\eps (P)|}$$
is thus well defined. Hence $n_\eps$  has the same non-zero degree as $\tilde n|_{\partial\Omega_\eps}$ and thus $\tilde n|_{\partial\Omega_\eps}$ is surjective (see \cite[pp.123,125]{hirsch}), a contradiction. Hence $n$ is surjective.

\smallskip\par If $m=3$ and the Euler characteristic of $\Omega$ is zero  the second part of Proposition~\ref{prop:goodtorus} provides the required counterexample.

\smallskip\par (ii)   In the same way as for part (i) it suffices to show that $\tilde n|_{\partial\Omega_\eps}$  is surjective for nonzero $|\varepsilon|$ sufficiently small. We  observe that for $\eps\not=0$ the set $\Omega_\eps$ is a smooth 2D manifold with boundary. By the classification theorem for 1D connected, compact smooth manifolds (see for instance \cite{milnor}) we have that each connected component $\Gamma$ of $\partial\Omega_\eps$ is diffeomorphic to $\Sph^1$. Thus $\Gamma$ can be parametrized as a smooth closed curve $\gamma:\Sph^1\to \R^2$ with constant speed $|\dot\gamma(t)|=s>0$. Let $N(t)=n_\varepsilon(\gamma(t))$ for $t\in\Sph^1$. Then  $\Delta(t)=\dot\gamma_1(t)N_2(t)-\dot\gamma_2(t)N_1(t)$ equals $\pm s$ for each $t\in \Sph^1$, and since $\Delta(t)$ is continuous the sign of $\Delta(t)$ is independent of $t\in\Sph^1$.   The  {\it Umlaufsatz} theorem of Hopf (see for instance \cite[p. 275]{chavel}) guarantees that  $\dot{\gamma}(t)/|\dot\gamma(t)|$ has degree $\pm 1$  when regarded as a function from $\Sph^1$ into  $\Sph^1$. The fact that $\Delta(t)$ has constant sign implies that    $N:\Sph^1\to\Sph^1$ is homotopic to $\dot{\gamma}(t)/|\dot\gamma(t)|$, and so   it has degree $\pm 1$ as well. Since $N$ and $\tilde n|_{\Gamma}$ are also homotopic (by the same argument as before), we have that $\tilde n|_\Gamma$ has non-zero degree and hence   is surjective.  \end{proof}

\begin{remark}\label{rmk:surj}
{\rm The   proof of Part (i) of Theorem \ref{prop:surjectivity} shows that if $m$ is odd, then any continuous pseudonormal field is surjective, provided that  {\it a connected component of the boundary} has non-zero Euler characteristic. This is due to the fact that for $m$ odd  the degree of the Gauss map of a closed, smooth $(m-1)$-dimensional hypersurface in $\R^m$ is half of its Euler characteristic  (see for example \cite[p. 196]{guillemin}).
}
\end{remark}

\smallskip We   continue by investigating properties of the {\it multivalued} map of {\it all} pseudonormals. For  $\Omega\subset\R^m$  a bounded domain of class $C^0$ and $P\subset\partial\Omega$ we  let
$$\mcG (P)\defeq \{n\in S^{m-1}: n\mbox { is a  pseudonormal at  }P\},$$
and for any $E\subset\partial\Omega$ let $\mcG(E)\defeq \cup_{P\in E} \mcG(P)$. We denote by $\mathcal{P}(S^{m-1})$  the set of all subsets of $S^{m-1}$ and begin by noting that the  map $\mcG:\partial\Omega\rightarrow \mathcal{P}(S^{m-1})$ is lower semicontinuous. Indeed, by  definition this means that given any $P\in \partial\Omega$ and $n\in\mcG(P)$, for any neighbourhood $V$ of $n$ in $S^{m-1}$ there is a neighbourhood $U$ of $P$ in $\partial\Omega$ such that $\mcG(Q)\cap V\neq\emptyset$ for all $Q\in U$; this is obvious since $n\in \mcG(Q)$ for all $Q$ in a neighbourhood of $P$.

\par We use the following topological fact.
\begin{lemma}\label{topfact} If $\Omega\subset\R^m$ is a connected open set and $\mcU$ is a connected component of $\R^m\setminus\overline\Omega$ then $\partial\mcU$ is connected.
\label{lemma:topcon}
\end{lemma}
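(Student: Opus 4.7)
The plan is to argue by contradiction. Suppose $\partial\mcU = A \sqcup B$ with $A, B$ nonempty, disjoint, and closed in $\R^m$ (equivalently, closed in the closed set $\partial\mcU$). I will enlarge $\mcU$ in two ways and use the resulting geometry to derive a contradiction with the maximality of $\mcU$. Let $V_A$ be the connected component of the open set $\R^m \setminus A$ containing $\mcU$, and let $V_B$ be the connected component of $\R^m \setminus B$ containing $\mcU$. Since $\R^m$ is locally connected, $V_A$ and $V_B$ are open and connected, and $\partial V_A \subset A$, $\partial V_B \subset B$.

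The first block of steps establishes three inclusions: (i) $B \subset V_A$ and symmetrically $A \subset V_B$; (ii) $\Omega \subset V_A \cap V_B$; (iii) $V_A \cup V_B = \R^m$. For (i), each $b \in B$ is a limit of points of $\mcU \subset V_A$, so $b \in \overline{V_A} \subset V_A \cup \partial V_A \subset V_A \cup A$, and since $A \cap B = \emptyset$ one gets $b \in V_A$. For (ii), $B \subset V_A$ open together with $B \subset \partial\Omega$ shows that $V_A$ meets $\Omega$; as $\Omega$ is connected and $\Omega \cap A = \emptyset$, the whole of $\Omega$ sits in the single component $V_A$, and symmetrically in $V_B$. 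For (iii), $\partial(V_A \cup V_B) \subset \partial V_A \cup \partial V_B \subset A \cup B$, yet both $A \subset V_B$ and $B \subset V_A$ lie in the open set $V_A \cup V_B$, hence in its interior; therefore $\partial(V_A \cup V_B) = \emptyset$, and connectedness of $\R^m$ forces $V_A \cup V_B = \R^m$.

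The heart of the proof is then the connectedness of $V_A \cap V_B$. I will deduce this by applying the reduced Mayer--Vietoris sequence to the open cover $\R^m = V_A \cup V_B$: since $H_1(\R^m) = 0$, $\tilde H_0(\R^m) = 0$, and $\tilde H_0(V_A) = \tilde H_0(V_B) = 0$ (as $V_A$ and $V_B$ are connected), exactness forces $\tilde H_0(V_A \cap V_B) = 0$, so $V_A \cap V_B$ is path-connected, hence connected. A self-contained alternative uses simple connectedness of $\R^m$ directly: concatenate a path in $V_A$ with a reverse path in $V_B$ between two prescribed points of $V_A \cap V_B$ to form a loop, then apply the Lebesgue number lemma to a null-homotopy of this loop to construct a path between the endpoints inside $V_A \cap V_B$.

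Granted the connectedness of $V_A \cap V_B$, the contradiction is immediate: $V_A \cap V_B$ is disjoint from $A \cup B = \partial\mcU$ (because $V_A \cap A = \emptyset$ and $V_B \cap B = \emptyset$), so $V_A \cap V_B \subset \mcU \sqcup (\R^m \setminus \overline{\mcU})$, a disjoint union of two open sets of $\R^m$. Thus $\mcU$ is clopen in $V_A \cap V_B$, and by its connectedness $V_A \cap V_B = \mcU$. But $\Omega \subset V_A \cap V_B$ and $\Omega \cap \mcU = \emptyset$, a contradiction. The main obstacle is precisely the connectedness of $V_A \cap V_B$; the remaining steps are point-set bookkeeping that exploit $\partial V_A \subset A$, $\partial V_B \subset B$, and the maximality of $\mcU$ as a component of $\R^m \setminus \overline\Omega$.
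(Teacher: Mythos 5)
Your proof is correct, but it is genuinely different from what the paper does: the paper offers no argument at all, merely citing Kuratowski (the unicoherence of $\R^m$, \S 57) and a remark of Kol\'a\v{r}--Kristensen. The key step you isolate --- that $V_A\cap V_B$ is connected whenever $V_A, V_B$ are connected open sets with $V_A\cup V_B=\R^m$ --- is precisely the (open-set form of the) unicoherence property being invoked there, so in effect you have supplied a self-contained proof of the black box, deriving it from $H_1(\R^m)=0$ via Mayer--Vietoris (or, in your alternative, from simple connectivity plus the Lebesgue number lemma). The surrounding point-set reductions are all sound: $A,B$ are closed in $\R^m$ because $\partial\mcU$ is; $\partial V_A\subset A$ because components of open subsets of the locally connected space $\R^m$ are relatively clopen; and the final contradiction $V_A\cap V_B=\mcU\supset\Omega$ works because $V_A\cap V_B$ is a connected subset of $\R^m\setminus\partial\mcU=\mcU\cup(\R^m\setminus\overline\mcU)$ meeting $\mcU$. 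The only step you use silently is $\partial\mcU\subset\partial\Omega$ (needed to see that $V_A$, being an open neighbourhood of a point of $B$, meets $\Omega$); this is true and easy --- $\partial\mcU\subset\overline\Omega$ since $\mcU$ is clopen in $\R^m\setminus\overline\Omega$, and $\partial\mcU$ cannot meet the open set $\Omega$, which is disjoint from $\mcU$ --- but it deserves a line. What your route buys is a complete, checkable argument in place of a reference to a 1966 treatise; what the paper's route buys is brevity and the full generality of Kuratowski's unicoherence theory.
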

\begin{proof} This is a consequence of  \cite[49.VI,Theorem 2 and 57.I.9(i),57.III.1]{kuratowski}    (also noted in  \cite[Lemmas 4(i), 5]{kristensen}). \end{proof}
 In addition we have the following structural result.
\begin{lemma}
\label{comps}
Let $\Omega\subset\R^m$ be a bounded domain of class $C^0$. Then $\R^m\setminus \bar\Omega$ has a single unbounded connected component $\mathcal{D}$ and finitely many bounded connected components
$\mcU_i, \,i=1,\dots,k$,  each of which  is a bounded domain of class $C^0$ with $\R^m\setminus\bar\mcU_i$ connected. Furthermore $\partial\Omega$ can be written as the disjoint union 
\begin{equation}\label{disjointunion}
\partial\Omega=\partial\mathcal{D}\cup   \partial\mcU_1 \cup \cdots \cup \partial\mcU_k
\end{equation}
and the connected components of $\partial\Omega$ are the sets $\partial\mathcal{D}, \partial\mcU_i,\,i=1,\dots,k$.
\end{lemma}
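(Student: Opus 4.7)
The plan is to establish the assertions in sequence, using the local graph structure of $\partial\Omega$ together with Lemma~\ref{lemma:topcon}.

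\emph{Step 1 (one unbounded, finitely many bounded components).} Since $\Omega$ is bounded, $\bar\Omega\subset B(0,R)$ for some $R$, and the connected set $\R^m\setminus\overline{B(0,R)}$ lies in $\R^m\setminus\bar\Omega$, forcing a unique unbounded component $\mathcal{D}$. For finiteness, the $C^0$ property yields at each $P\in\partial\Omega$ a neighbourhood $W_P$ of $P$ (a thin cylinder aligned with the local graph axis, chosen small enough) such that $W_P\cap(\R^m\setminus\bar\Omega)$ is a piece of the hypograph $\{y_m<f(y')\}$ and is nonempty and connected. Extract a finite subcover $W_1,\dots,W_N$ of the compact set $\partial\Omega$. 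Any component $\mathcal{V}$ of $\R^m\setminus\bar\Omega$ is proper and open, so has nonempty boundary contained in $\partial\Omega$, hence meets some $W_i$; the connected set $V_i:=W_i\cap(\R^m\setminus\bar\Omega)\subset\R^m\setminus\bar\Omega$ must then lie in $\mathcal{V}$. As each $V_i$ belongs to a unique component, the total number of components is at most $N$.

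\emph{Step 2 ($\mcU_i$ are $C^0$ domains and $\partial\Omega$ decomposes).} For each bounded component $\mcU_i$ and each $P\in\partial\mcU_i\subset\partial\Omega$, reverse the $y_m$-axis in the local representation $\Omega\cap B(P,\delta)=\{y_m>f(y')\}$: the connected hypograph piece in a small $W_P$ meets $\mcU_i$ (since $P\in\partial\mcU_i$) and lies in $\R^m\setminus\bar\Omega$, so equals $\mcU_i\cap W_P$, producing a local graph representation of $\mcU_i$ at $P$. Hence $\mcU_i$ is a bounded domain of class $C^0$. The same local argument shows that each $P\in\partial\Omega$ lies in $\partial\mathcal{V}$ for a \emph{unique} component $\mathcal{V}$: two distinct components both meeting the connected set $W_P\cap(\R^m\setminus\bar\Omega)$ would have to coincide. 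This yields \eqref{disjointunion} as a disjoint union. Lemma~\ref{lemma:topcon} applied to the connected open set $\Omega$ gives that each of $\partial\mathcal{D}$ and $\partial\mcU_i$ is connected, and since these form a finite disjoint closed cover of $\partial\Omega$, they are precisely its connected components.

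\emph{Step 3 (connectedness of $\R^m\setminus\bar\mcU_i$).} Using that $\bar\Omega\cap\bar\mcU_i=\partial\mcU_i$,
$$\R^m\setminus\bar\mcU_i=(\bar\Omega\setminus\partial\mcU_i)\cup\bar{\mathcal{D}}\cup\bigcup_{j\neq i}\bar\mcU_j.$$
The first piece equals $\Omega\cup\partial\mathcal{D}\cup\bigcup_{j\neq i}\partial\mcU_j$ and is the closure of the connected set $\Omega$ in the subspace $\R^m\setminus\bar\mcU_i$, hence connected. It contains $\partial\mathcal{D}\subset\bar{\mathcal{D}}$ and $\partial\mcU_j\subset\bar\mcU_j$, so meets each of the connected closures $\bar{\mathcal{D}},\bar\mcU_j$. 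The union of a connected set with connected sets that each meet it is connected, yielding the claim.

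The main obstacle I expect is the bookkeeping in Step~2: keeping the roles of $\partial\Omega$, $\partial\mcU_i$ and $\bar\mcU_i$ distinct while flipping the local graph to the correct component, and avoiding circularity between the decomposition of $\partial\Omega$ and the identification of its connected components.
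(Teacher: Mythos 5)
Your argument is correct and follows essentially the same route as the paper's proof: the same decomposition $\R^m\setminus\bar\mcU_i=(\bar\Omega\setminus\partial\mcU_i)\cup\bar{\mathcal D}\cup\bigcup_{j\ne i}\bar\mcU_j$, the same appeal to Lemma~\ref{lemma:topcon} plus the finite-disjoint-closed-cover observation to identify the components of $\partial\Omega$, and the same local hypograph connectedness underlying both finiteness and disjointness (which the paper compresses into ``easily seen to contradict that $\Omega$ is of class $C^0$''). You supply more detail than the paper at two points --- the finite subcover count for the number of components and the explicit axis-reversal showing each $\mcU_i$ is of class $C^0$ --- but these are elaborations, not a different method.
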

\begin{proof}
\par  Since $\Omega$ is bounded, $\R^m\setminus \bar\Omega$ has a single unbounded connected component $\mathcal{D}$. If there were infinitely many bounded connected components $\mcU_i$ then we would have $x_i\to x\in\partial\Omega$  for some sequence with $x_i\in\mcU_i$,  which is easily seen to contradict that $\Omega$ is of class $C^0$. Since $\Omega$ is of class $C^0$ we have that $\partial\Omega=\partial(\R^m\setminus\bar\Omega)$, from which \eqref{disjointunion} follows.  The fact that all the sets in the union are disjoint  follows easily from   $\Omega$ being of class $C^0$.  Also 
$\R^m\setminus\overline\mcU_i=\cup_ {j\not=i}\overline{\mcU_j}\cup\overline{\mathcal{D}}\cup\Omega$, which is connected because all the sets in the union are connected and, for example, each point of $\partial\mathcal{D}$ (resp. $\partial\mcU_j, j\neq i$) has a neighbourhood consisting of points in $\bar{\mathcal{D}}\cup\Omega$ (resp. $\bar{\mcU_j}\cup\Omega$). Finally,
 by Lemma~\ref{lemma:topcon} each of the disjoint compact sets in \eqref{disjointunion} is connected, and so they are the connected components of $\partial\Omega$.
\end{proof}

We can now provide some properties of the image of $\mcG$:

\begin{proposition}\label{Gprops} Let $\Omega\subset\R^m, m\geq 2$ be a bounded domain of class $C^0$. Let $\mcC$ be a connected component of $\partial\Omega$. Then
    
{\rm (i)} {\rm Span}$\,\mcG(\mcC)=\R^m$,

{\rm (ii)} $\mcG(\mcC)$ is connected.

\end{proposition}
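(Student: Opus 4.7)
My plan is to treat (i) by a direct contradiction using a support-function argument, and (ii) by invoking the general fact that a lower semicontinuous multivalued map with connected values on a connected space has connected union.

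For (i), I would argue by contradiction: if $\mathrm{Span}\,\mcG(\mcC)$ were a proper subspace of $\R^m$, there would exist a unit vector $v\in\R^m$ with $n\cdot v=0$ for every $P\in\mcC$ and every $n\in\mcG(P)$. Then I would consider the continuous function $\Phi(P)=P\cdot v$ on the compact set $\mcC$ and let $P_0\in\mcC$ be a point where $\Phi$ attains its maximum. Choosing any $n\in\mcG(P_0)$, I would introduce the local graph representation $\partial\Omega\cap B(P_0,\delta)=\{(y',f(y')):|(y',f(y'))|<\delta\}$ in coordinates centred at $P_0$ with $e_m=n$. Because $v\cdot n=0$, one has $v=(v',0)$ with $|v'|=1$, so for boundary points $(y-P_0)\cdot v=y'\cdot v'$. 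The local graph is connected and contains $P_0$, so it lies inside $\mcC$; hence $y'\cdot v'\le 0$ for all small $y'$. Choosing $y'=\lambda v'$ with $\lambda>0$ small produces the required contradiction.

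For (ii), I would combine three ingredients already in the paper: the lower semicontinuity of $\mcG$ (noted immediately before Lemma~\ref{topfact}), the connectedness of each $\mcG(P)$ (since it is geodesically convex by Lemma~\ref{geoconv}), and the connectedness of $\mcC$ (by Lemma~\ref{comps}). The core step is standard: assuming for contradiction a separation $\mcG(\mcC)=A\sqcup B$ by disjoint nonempty sets relatively open in $\mcG(\mcC)$, I would extend $A,B$ to open subsets $\tilde A,\tilde B\subset S^{m-1}$ with $\tilde A\cap\tilde B\cap\mcG(\mcC)=\emptyset$. Since each $\mcG(P)$ is connected and contained in $A\cup B$, it must lie entirely in $A$ or entirely in $B$, partitioning $\mcC$ into $\mcC_A=\{P:\mcG(P)\subset A\}$ and $\mcC_B=\{P:\mcG(P)\subset B\}$. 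Lower semicontinuity implies both sets are open in $\mcC$: if $P\in\mcC_A$ and $n\in\mcG(P)\subset\tilde A$, then some neighbourhood $U$ of $P$ satisfies $\mcG(Q)\cap\tilde A\neq\emptyset$ for $Q\in U$, forcing $\mcG(Q)\subset A$ and hence $Q\in\mcC_A$. This contradicts the connectedness of $\mcC$.

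Neither part should present real difficulty. For (i) the only subtlety is verifying that the local graph neighbourhood of $P_0$ in $\partial\Omega$ actually lies in $\mcC$, which follows because it is connected and contains $P_0$. For (ii) the only care needed is to choose the extensions $\tilde A,\tilde B$ so that the connectedness of each $\mcG(P)$ can be applied to deduce the dichotomy $\mcG(P)\subset A$ or $\mcG(P)\subset B$.
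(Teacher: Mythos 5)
Your proof of (ii) is essentially the paper's own argument: the paper likewise decomposes $\mcG(\mcC)$ into two separated nonempty pieces, uses the geodesic convexity of $\mcG(P)$ (Lemma \ref{geoconv}) to get the dichotomy $\mcG(P)\subset A$ or $\mcG(P)\subset B$, and contradicts the connectedness of $\mcC$; the only difference is cosmetic, in that the paper runs the last step with separated sets and a sequence $P_j\to P$ while you use relative openness of $\mcC_A,\mcC_B$ via the lower semicontinuity of $\mcG$ -- these are equivalent. Part (i), however, is a genuinely different route, and it works. The paper invokes Lemma \ref{comps} to identify $\mcC$ as either the boundary of the unbounded component $\mathcal{D}$ of $\R^m\setminus\bar\Omega$ or the boundary of a hole $\mcU_i$ (itself a $C^0$ domain), and then slides a supporting hyperplane with normal $N$ from the appropriate side to produce a pseudonormal in $\{z\cdot N<0\}$; you instead maximize $\Phi(P)=P\cdot v$ directly on the compact set $\mcC$ and exploit the exact orthogonality $v\cdot n=0$ at the maximizer, so that in graph coordinates the increment of $\Phi$ along the boundary is exactly $y'\cdot v'$ and the choice $y'=\lambda v'$ contradicts maximality. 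This avoids the case analysis and Lemma \ref{comps} altogether, so it is more elementary and self-contained; what the paper's route buys is a slightly stronger fact, since its contradiction only uses $\mcG(\mcC)\subset\{z\cdot N\geq 0\}$, i.e.\ that $\mcG(\mcC)$ lies in no closed half-space, which your exact-orthogonality computation does not directly give (though it is not needed for the statement). One small precision to add in (i): the full trace $\partial\Omega\cap B(P_0,\delta)$ need not be connected (the set of $y'$ with $|(y',f(y'))|<\delta$ can be disconnected), so restrict to the graph over a small ball $\{|y'|<r\}$ with $r$ chosen, by continuity of $f$ and $f(0)=0$, so that $|(y',f(y'))|<\delta$ there; that piece is connected, contains $P_0$, hence lies in $\mcC$, and your conclusion ``$y'\cdot v'\le 0$ for all small $y'$'' then stands as written.
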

\begin{proof}
(i) We assume for contradiction that this is not true. Then Span$\,\mcG(\mcC)$ is contained in an $(m-1)$-dimensional affine subspace of $\R^m$ and thus $\mcG(\mcC)\subset \mathbb{S}^{m-1}\cap \{z\in\R^m:z\cdot N\geq 0\}$ for some $N\in \mathbb{S}^{m-1}$. By Lemma~\ref{comps} either $\mcC=\partial \mathcal D$   or $\mcC=\partial \mathcal U_i$ for some $i$. If $\mcC=\partial \mathcal D$ then sliding  a hyperplane with normal $N$ from $x\cdot N=+\infty$  until it touches $\mcC$ for the first time at some $P$, we find a good direction at $P$ belonging to $\{z\in\R^m:z\cdot N<0\}$, a contradiction. Similarly, if $\mcC=\partial \mathcal U_i$ for some $i$, then sliding such a hyperplane from $x\cdot N\to-\infty$ until it touches $\mcC$ for the first time, and recalling that by Lemma~\ref{comps} $\mcU_i$ is of class $C^0$, gives a similar contradiction. \\ \\
(ii) We assume for contradiction  that $\mcG(\mcC)$ is not connected.  Then $\mcG(\mcC)$ can be decomposed as $\mcG(\mcC)=A\cup B$ where $A,B$ are nonempty  sets and $A\cap \bar B=\bar A\cap B=\emptyset$.  Let $P\in \mcC$. We claim that either $\mcG(P)\subset A$ or $\mcG(P)\subset B$. Indeed if $n_1\in A, n_2\in B$ where $n_1,n_2\in\mcG(P)$ then since $\mcG(P)$ is convex
$$ n(\lambda)\defeq\frac{\lambda n_1+(1-\lambda)n_2}{|\lambda n_1+(1-\lambda)n_2|}\in\mcG(P) \mbox{ for all }\lambda\in [0,1].$$
But the sets $\{\lambda\in[0,1]:n(\lambda)\in A\}$ and $\{\lambda\in[0,1]:n(\lambda)\in B\}$ are relatively open and their union is $[0,1]$, contradicting the connectedness of $[0,1]$.

Now consider the sets $\mcC_A=\{P\in\mcC:\mcG(P)\subset A\}$ and $\mcC_B=\{P\in\mcC:\mcG(P)\subset B\}$, whose disjoint union is $\mcC$. Since $\mcC$ is connected, one of the sets $\mcC_A\cap \bar \mcC_B$, $\mcC_B\cap\bar\mcC_A$ is nonempty. Suppose, for example, that $P\in\mcC_A\cap \bar\mcC_B$. Then there exists a sequence $P_j\rightarrow P$ with $\mcG(P_j)\subset B$. Let $n\in\mcG(P)$. Then $n\in A$ but also $n\in\mcG(P_j)$ for sufficiently large $j$, and hence $n\in B$, a contradiction. \end{proof}

\section{Partial regularity of bounded $C^0$ domains}
In this section we show that if $\Omega\subset\R^m$ is a  {\it bounded} domain of class $C^0$ then\footnote{if $m\geq 4$ under the possibly unnecessary assumption that the Euler characteristic of $\Omega$ is nonzero} $\Omega$ has a Lipschitz boundary portion.
\begin{lemma}
 Let $\Omega\subset\R^m$, $m\ge 2$, be a bounded  domain of class $C^0$. If the set of good directions $($pseudonormals$)$ at a point $P\in\partial\Omega$ contains $m$ linearly independent directions then $\partial\Omega$ is Lipschitz in a neighbourhood of $P$, that is  for some $\delta>0$ and a suitable orthonormal coordinate system $Y\stackrel{\rm def}{=}(y',y_m)$ with origin at $P$
 \be
 \Omega\cap B(P,\delta)=\{y\in\R^m: y_m>f(y'),\,|y|<\delta\}
\label{liprep}
\ee
  where $f:\R^{m-1}\to\R$ is Lipschitz.
 \label{lemma:lipreg}
\end{lemma}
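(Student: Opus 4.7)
The plan is to combine Lemma \ref{ggd} (which will produce a whole open set of good directions at $P$ sharing a common radius $\delta$) with a standard chord-avoidance argument that converts this abundance of good directions into a Lipschitz bound on the local graph representation.

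First I would set $\delta:=\min_{1\leq i\leq m}\delta(P,n_i)>0$ so that $n_1,\dots,n_m$ are all good directions at $P$ with respect to the same ball $B(P,\delta)$. By Lemma \ref{ggd}, every normalised strictly positive convex combination $v=\big(\sum_i\lambda_i n_i\big)\big/\big|\sum_i\lambda_i n_i\big|$ with $\lambda_i>0$, $\sum_i\lambda_i=1$ is also a good direction at $P$ with respect to $B(P,\delta)$. Since the $n_i$ are linearly independent, the set of such $v$ is the normalisation of the open positive cone they span, hence an open subset $V_0$ of $\Sph^{m-1}$. Fix $v_0\in V_0$ and an open spherical cap $V\subset V_0$ about $v_0$ of half-angle $\theta_0\in(0,\pi/2)$, and choose orthonormal coordinates $(y',y_m)$ centred at $P$ with $e_m=v_0$. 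Because $v_0$ is itself a good direction at $P$ with respect to $B(P,\delta)$, there exists a continuous $f:\R^{m-1}\to\R$ with $\Omega\cap B(P,\delta)=\{y_m>f(y'),\,|y|<\delta\}$, and the job is to show that $f$ is Lipschitz near $0$.

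To establish the Lipschitz estimate, I would pick $\delta_1\in(0,\delta/4)$ small enough (using $f(0)=0$ and continuity) so that $|y'|<\delta_1$ forces $(y',f(y'))\in B(P,\delta/4)$, and take any two boundary points $Q_i=(y_i',f(y_i'))$ with $|y_i'|<\delta_1$. Assume for contradiction that $|f(y_1')-f(y_2')|>(\cot\theta_0)|y_1'-y_2'|$, and suppose without loss of generality $f(y_1')>f(y_2')$; then the unit vector $v:=(Q_1-Q_2)/|Q_1-Q_2|$ makes angle less than $\theta_0$ with $e_m$ and therefore lies in $V$, so $v$ is a good direction at $P$ with respect to $B(P,\delta)$. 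Writing the supergraph representation of Definition \ref{gooddir} in the direction $v$ and using $Q_2\in\bar\Omega\cap B(P,\delta/4)$, one concludes that $Q_2+tv\in\Omega$ for every $t>0$ for which $Q_2+tv$ still lies in $B(P,\delta)$; since $|Q_1-Q_2|<\delta/2$ this holds at $t=|Q_1-Q_2|$, hence $Q_1=Q_2+|Q_1-Q_2|\,v\in\Omega$, contradicting $Q_1\in\partial\Omega$. Thus $f$ is $(\cot\theta_0)$-Lipschitz on $\{|y'|<\delta_1\}$, and a McShane extension combined with a suitably small $\delta'>0$ produces the representation \eqref{liprep}.

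The only slightly delicate point I anticipate is the bookkeeping around the ball $B(P,\delta)$ in which each supergraph representation is valid: one has to verify that the entire chord $\{Q_2+tv:0\leq t\leq|Q_1-Q_2|\}$ remains inside $B(P,\delta)$, so that the representation in the new direction $v$ genuinely applies to $Q_1$. Restricting $Q_1,Q_2$ to $B(P,\delta/4)$ takes care of this at once. Everything else is essentially forced: the existence of an open cap of good directions is immediate from Lemma \ref{ggd} together with linear independence, and the passage from ``no secant direction lies in $V$'' to a Lipschitz graph is the textbook cone argument.
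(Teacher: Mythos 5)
Your proposal is correct and follows essentially the same route as the paper: both use Lemma \ref{ggd} plus linear independence to produce an interior point (your open cap $V$) of the set of good directions at $P$ sharing a common ball, and both derive the Lipschitz bound from the fact that a too-steep secant of the graph would itself be a good direction, which is impossible since a line through a boundary point in a good direction meets $\partial\Omega$ only once. The only cosmetic difference is that you run the cone argument directly with the explicit constant $\cot\theta_0$, whereas the paper argues by contradiction with sequences $S_j,T_j\to 0$ whose difference quotients blow up.
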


\begin{proof} Let $\{n_1,\dots,n_m\}$ be a set of linearly independent good directions at $P$ and let $\tilde n$ be an interior point of the geodesically convex hull  $\rm{co}_g\{n_1,\dots,n_m\}$ of $\{n_1,\dots,n_m\}$, which by Lemma~\ref{lemma:multiple} is also an interior point of the set of good directions at $P$. For example we can take

$$\tilde n=\frac{\sum_{i=1}^m n_i}{|\sum_{i=1}^m n_i|}.$$

\par Choosing an orthonormal coordinate system $Y=(y',y_m)$ with origin at $P$ and $\tilde n=e_m$ we have by Lemma~\ref{lemma:multiple} that the representation (\ref{liprep}) holds for some $\delta>0$ with $f:\R^{m-1}\to\R$ continuous. We claim that $f$ is Lipschitz in a neighbourhood of $0$. Then extending $f$ outside this neighbourhood to a Lipschitz map on $\R^{m-1}$, and choosing $\delta$ smaller if necessary, gives  the result.
\par If the claim were false there would exist sequences $S_j\to 0$, $T_j\to 0$ in $\R^{m-1}$ such that $S_j\not=T_j$ and
$$\frac{f(S_j)-f(T_j)}{|S_j-T_j|}\to\infty\,\textrm{ as }j\to\infty.$$
\par But for large enough $j$ the unit vector $$N_j\stackrel{\rm def}{=}\frac{(S_j,f(S_j))-(T_j,f(T_j))}{\sqrt{|S_j-T_j|^2+|f(S_j)-f(T_j)|^2}}$$ is not a good direction, since the line $t\to (T_j, f(T_j))+tN_j$ meets $\partial\Omega$ twice in $B(P,\delta)$ at $(T_j, f(T_j))$ and $(S_j,f(S_j))$. But $\lim_{j\to\infty}N_j=e_m$, contradicting that $\tilde n=e_m$ is an interior point of the set of good directions at $P$. \end{proof}

We continue by showing that if part of the boundary of a 2D domain is the graph of a nowhere differentiable function then there is exactly one good direction for each point on that part of the boundary.

\begin{lemma}\label{2Dex}
Let $f:(a,b)\to\R$ be a continuous nowhere differentiable function. Let $$\mathcal{G}\defeq\{(x,f(x))\in\R^2: x\in (a,b)\}$$ 
be a subset of the boundary $\partial\Omega$ of a bounded domain $\Omega\subset\R^2$ of class $C^0$.Then  at any point $P\in\mathcal{G}$ there exists a unique good direction namely $\pm(0,1)$ $($the sign being independent of $P\in\mathcal{G})$.
\end{lemma}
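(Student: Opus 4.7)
The plan is to establish three facts at each $P = (x_0, f(x_0)) \in \mathcal{G}$ with $x_0 \in (a, b)$: (i) either $(0, 1)$ or $(0, -1)$ is a good direction at $P$; (ii) no other unit vector is a good direction at $P$; and (iii) the sign in (i) is independent of $P$. Fact (i) will follow because, in a sufficiently small ball $B(P,\delta)$, the $C^0$ structure of $\Omega$ forces $\partial\Omega\cap B(P,\delta)$ to coincide with $\mathcal{G}\cap B(P,\delta)$ (two connected topological arcs through $P$, one contained in the other, must agree on a neighbourhood of $P$), whereupon $\Omega$ locally occupies one of the two sides of the graph of $f$ and $(0,\pm 1)$ is a pseudonormal. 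Fact (iii) then follows from (i) by observing that the choice of side is locally constant and that $\mathcal{G}$, being the continuous image of the interval $(a,b)$, is connected.

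The substantive content is (ii). Suppose for contradiction that some $n = (\sin\alpha, \cos\alpha)$ with $\sin\alpha \neq 0$ is a good direction at $P$. In orthonormal coordinates $(\tilde y_1, \tilde y_2)$ centred at $P$ with the $\tilde y_2$-axis along $n$, there exist $\delta>0$ and a continuous $g$ with $g(0)=0$ such that $\partial\Omega\cap B(P,\delta) = \{\tilde y_2 = g(\tilde y_1)\}$. Translating between the two coordinate systems, every boundary point $(x, f(x))$ near $P$ must satisfy
\begin{equation*}
(x-x_0)\sin\alpha + (f(x) - f(x_0))\cos\alpha \;=\; g\bigl((x-x_0)\cos\alpha - (f(x) - f(x_0))\sin\alpha\bigr).
\end{equation*}
Write $u(x)$ for the argument of $g$. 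The map $u$ is continuous, and since distinct $x$'s produce distinct boundary points which in the second representation have distinct $\tilde y_1$-coordinates, $u$ is injective on some neighbourhood $I$ of $x_0$, hence strictly monotone on $I$. Solving the definition of $u$ for $f$, using $\sin\alpha \neq 0$, yields
\begin{equation*}
f(x) - f(x_0) \;=\; \frac{(x-x_0)\cos\alpha - u(x)}{\sin\alpha}, \qquad x\in I,
\end{equation*}
which exhibits $f|_I$ as the sum of a linear function and a constant multiple of a monotone function. By Lebesgue's theorem on differentiability of monotone functions, the right-hand side is differentiable at almost every point of $I$, contradicting the hypothesis that $f$ is nowhere differentiable on $(a, b)$.

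The main obstacle is the rigorous justification that the two local graph representations of $\partial\Omega$ through $P$ (via $f$ over the $x$-axis and via $g$ over the $\tilde y_1$-axis) describe the same arc of $\partial\Omega$ on a common neighbourhood of $P$, which legitimates the coordinate identity and in turn the fact that $u$ is defined on a genuine interval $I$ about $x_0$. This is a local point-set matter: both arcs are connected topological 1-manifolds meeting at $P$ with $\mathcal{G}$ locally contained in $\partial\Omega$, so after shrinking $\delta$ they must coincide near $P$. Once this is in place, the strict monotonicity of $u$ and the application of Lebesgue's theorem finish the proof immediately.
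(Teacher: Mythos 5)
Your proposal is correct, but for the crux of the lemma --- ruling out any good direction $n=(\sin\alpha,\cos\alpha)$ with $\sin\alpha\neq 0$ --- you take a genuinely different, more elementary route than the paper. The paper propagates the hypothetical second good direction to a whole sub-arc of $\mathcal{G}$, invokes Lemma \ref{lemma:lipreg} (two linearly independent good directions give a Lipschitz graph in an intermediate direction $m$), and then transfers a.e.\ differentiability of that Lipschitz graph back to $f$ by the rotation computation, with a separate case (its Case II) for the exceptional set where the tangent is parallel to $(0,1)$. You instead work at the single point $P$: every point of $\mathcal{G}$ near $P$ lies on the rotated graph of $g$, the rotated abscissa $u(x)$ is continuous and injective (two points of the graph of $g$ with equal $\tilde y_1$ coincide), hence strictly monotone, and solving for $f$ exhibits it as an affine function plus a constant multiple of a monotone function, so Lebesgue's theorem on monotone functions gives a.e.\ differentiability of $f$ --- no Lipschitz machinery and no case analysis, since your algebraic solve-out (using $\sin\alpha\neq0$) automatically disposes of the paper's vertical-tangent case; you also localize at $P$ rather than shrinking $(a,b)$. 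Two small points to tighten: first, uniqueness also requires that $(0,1)$ and $(0,-1)$ are not both good directions at the same point, which your $\sin\alpha\neq0$ argument does not cover, but this is immediate from Lemma \ref{lemma:two} ($p\neq -q$ for two good directions at one point). Second, the arc-coincidence issue you call the main obstacle is needed only for your facts (i) and (iii) (that $\pm(0,1)$ is a good direction, with constant sign --- a point the paper disposes of as a normalization), not for the coordinate identity in (ii), which uses only the inclusion $\mathcal{G}\cap B(P,\delta)\subset\partial\Omega\cap B(P,\delta)$; and the clean justification of the coincidence is exactly your injectivity observation: the $\tilde y_1$-projection of the sub-arc of $\mathcal{G}$ through $P$ is continuous and injective on an open interval, hence monotone with open image containing $0$, so the graph of $g$ agrees with $\mathcal{G}$ over that image.
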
 

\begin{proof} 
We can assume without loss of generality that $(0,1)$ is a good direction for all $P\in\mathcal G$, i.e. that $\Omega$ lies locally above $\mathcal G$.
Let us assume for contradiction that there exists a point $P\in\mathcal{G}$ with another good direction $n\in \mathbb{S}^1$ with $n\not=(0,1)$. Then there exists a whole neighbourhood of $P$ in $\mathcal{G}$ with two good directions $(0,1)$ and $n$. Restricting the interval $(a,b)$ if necessary we may thus assume   that for any $P\in\mathcal{G}$ there are two good directions $(0,1)$ and $n$.

 Then for any $m=(m_1,m_2)\in\mathbb{S}^1$ in the interior of the geodesically convex hull of $(0,1)$ and $n$ we have by Lemma~\ref{lemma:lipreg} that 
  $\mathcal{G}$ is the graph, along $m$, of a Lipschitz function, say $g$.  Then $g$ is differentiable almost everywhere, so that for almost all points in $\mathcal{G}$ there exists a tangent to $\mathcal{G}$.
 Let 
$$\mathcal{G}^*\defeq\{ P\in\mathcal{G}: \textrm{ the tangent to }\mathcal{G}\textrm{ at P exists and is parallel to  } (0,1)\}.$$
 We claim now that if the function $g$ is differentiable at the point $P\in\mathcal{G}\setminus\mathcal{G}^*$ then so is $f$. More precisely let us denote $P=(\bar x^*, \bar y^*)$ in the system of coordinates with axis $O\bar y=m$ and $O\bar x=m^\perp=(m_2,-m_1)$ respectively $P=(x^*,y^*)$ in the system of coordinates with axis $Oy=(0,1)$ and $Ox=(1,0)$.
 That $g$ is differentiable  means that 
 \be\label{diff:gx*}
 \frac{g(\bar x)-g(\bar x^*)}{\bar x-\bar x^*}\to g'(\bar x^*),\textrm{ as }\bar x\to\bar x^*.
 \ee
 \begin{figure}[h] \centering \def\svgwidth{300pt} \input{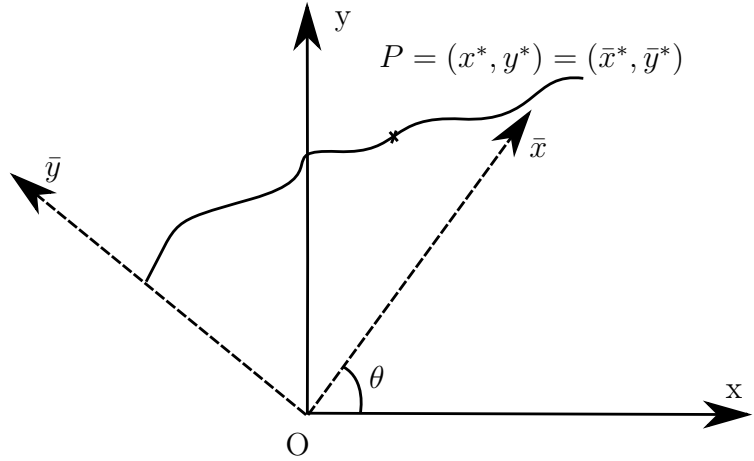}
\caption{Changing the coordinate system}
\label{figure:coordinates}
\end{figure}   
 Let $\theta$ be the angle between the $\bar x$ axis and the $x$ axis (see Figure~\ref{figure:coordinates}). Letting $R(\theta)\defeq \left(\begin{array}{ll} \cos\theta  &\sin\theta \\ -\sin\theta & \cos\theta\end{array}\right)$ we have that $\left(\begin{array}{l} \bar x\\  \bar y\end{array}\right)=R(\theta)\left(\begin{array}{l}  x\\   y\end{array}\right)$, which implies that 
 \be\label{coord:initial}
 \begin{array}{rl} \bar x-\bar x^* &=\cos\theta\,(x-x^*)+\sin\theta\,[f(x)-f(x^*)]\\ g(\bar x)-g(\bar x^*) &=-\sin\theta\,(x-x^*)+\cos\theta\,[f(x)-f(x^*)]
 \end{array}
 \ee
 Hence
 \be 
 \frac{f(x)-f(x^*)}{x-x^*} 
 =\frac{\sin\theta\,+\cos\theta\,\left(\frac{g(\bar x)-g(\bar x^*)}{\bar x-\bar x^*}\right)}{\cos\theta\,-\sin\theta\,\left(\frac{g(\bar x)-g(\bar x^*)}{\bar x-\bar x^*}\right)}.
 \label{diff:f}
 \ee
 Note now that \eqref{coord:initial} and the continuity of $f$ imply that $\bar x\to \bar x^*$ when $x\to x^*$, which together with \eqref{diff:gx*} and \eqref{diff:f} implies
 $$\frac{f(x)-f(x^*)}{x-x^*}\to \frac{\sin\theta+\cos\theta\, g'(\bar x^*)}{\cos\theta-\sin\theta\, g'(\bar x^*)}\textrm{  as } x\to x^*$$ (note that $\cos\theta-\sin\theta g'(\bar x^*)\not=0$ by our assumption that $P=(\bar x^*,g(\bar x^*))\not\in\mathcal{G}^*$).  Hence $f$ is differentiable at $P$ as claimed. 
 
 There are now two cases:
\smallskip

{\bf Case I: } The measure of $\mathcal{G}^*$ is strictly smaller than the measure of $\mathcal{G}$. Then at all points $P=(x,f(x))\in\mathcal{G}\setminus\mathcal{G}^*$  we have that $f$ is differentiable at $x$. Thus we obtain a contradiction with the assumption that $f$ is nowhere differentiable.

\smallskip
{\bf Case II:} The measure of $\mathcal{G}^*$ is the same as that $\mathcal{G}$, hence $g'$ is  almost everywhere $\cot \theta$.    Then $\mathcal{G}$ is contained in a straight line in the direction $(0,1)$, and hence   is not a graph in the direction $(0,1)$. Thus we again obtain a contradiction. \end{proof}

\medskip
We now show that under a topological condition bounded domains with continuous boundary necessarily have Lipschitz boundary portions.

\begin{theorem}\label{thm:zeroEulerchar} Let $\Omega$ be a bounded domain of class $C^0$ in $\R^m$, $m\ge 3$. If $\Omega$ has non-zero Euler characteristic then  there exists a point $P\in\partial\Omega$ in the neighbourhood of which $\partial\Omega$ is Lipschitz.
\end{theorem}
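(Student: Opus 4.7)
I plan to argue by contradiction and derive the contradiction from Theorem~\ref{prop:surjectivity}(i) via Sard's theorem applied to the canonical smooth field of good directions.

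Suppose $\partial\Omega$ is not Lipschitz in any neighbourhood of any of its points. Then Lemma~\ref{lemma:lipreg} yields, at every $P\in\partial\Omega$, that $\mcG(P)$ contains no $m$ linearly independent vectors, so $\mathrm{span}\,\mcG(P)\subseteq V_P$ for some linear subspace $V_P\subset\R^m$ of dimension at most $m-1$.

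The next step is to use the canonical smooth field $G:U\to\Sph^{m-1}$ from Proposition~\ref{smapp}, defined by
\[
G(P)=\frac{g(P)}{|g(P)|},\qquad g(P)=\sum_{i=1}^k\alpha_i(P)n_{P_i},
\]
where $\{\alpha_i\}$ is a smooth partition of unity subordinate to a cover $\{B(P_i,\tfrac12\delta(P_i))\}$ of a neighbourhood of $\partial\Omega$ and the $n_{P_i}$ are the reference good directions. The key observation is the following. Fix $P\in\partial\Omega$. For every index $i$ with $\alpha_i(P)>0$ the reference direction $n_{P_i}$ is, by construction, a good direction at $P$, so $n_{P_i}\in\mcG(P)\subset V_P$; moreover, since $\alpha_i\geq 0$ is smooth, $\alpha_i(P)=0$ forces $d\alpha_i(P)=0$. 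Thus both $g(P)$ and the image of the linear map $dg(P):\R^m\to\R^m$ lie in $V_P$. A direct computation then gives
\[
dG(P)=\frac{1}{|g(P)|}\bigl(I-G(P)G(P)^{T}\bigr)\,dg(P),
\]
whose image is contained in $V_P\cap G(P)^\perp$, a subspace of dimension at most $\dim V_P-1\leq m-2$. Hence $\mathrm{rank}\,dG(P)\leq m-2$ at every $P\in\partial\Omega$, so $\partial\Omega$ lies entirely in the critical set of the smooth map $G|_U:U\to\Sph^{m-1}$.

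Applying Sard's theorem to $G|_U$ (valid since $G\in C^\infty$ and $\dim U-\dim\Sph^{m-1}=1$) shows that the set of critical values has $(m-1)$-dimensional Lebesgue measure zero in $\Sph^{m-1}$; in particular $G(\partial\Omega)$ is a compact subset of $\Sph^{m-1}$ of measure zero, hence a proper subset. However $G|_{\partial\Omega}$ is a continuous pseudonormal field, so Theorem~\ref{prop:surjectivity}(i) (applicable since $m\geq 3$ and $\chi(\Omega)\neq 0$) forces $G(\partial\Omega)=\Sph^{m-1}$, yielding the desired contradiction. The one step requiring care is the pointwise rank bound for $dG$: the nowhere-Lipschitz assumption must be translated, through Lemma~\ref{lemma:lipreg} together with the specific convex-combination form of the canonical field, into a uniform rank drop for $dG$ along $\partial\Omega$, and then Sard's theorem converts this into a measure-theoretic statement incompatible with the topological surjectivity.
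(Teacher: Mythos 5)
Your proof is correct, but it reaches the contradiction by a genuinely different mechanism than the paper. Both arguments start the same way: take the canonical field $G=g/|g|$, $g=\sum_i\alpha_i n_{P_i}$, invoke Theorem \ref{prop:surjectivity}(i) to get surjectivity of $G|_{\partial\Omega}$, and use Lemma \ref{lemma:lipreg} to translate the absence of Lipschitz boundary points into the statement that the good directions at each $P\in\partial\Omega$ span at most an $(m-1)$-dimensional subspace $V_P$. The paper then argues by Baire category: setting $E_i=\{\alpha_i>0\}$, it shows $G$ maps each stratum $\bigcap_j E_{i_j}\setminus\bigcup_i\partial E_i$ into the fixed hyperplane spanned by the relevant $n_{P_{i_j}}$, so that $G(\partial\Omega)$ lies in a finite union of great hyperspheres (a closed nowhere dense set), contradicting surjectivity without ever differentiating $G$. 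You instead differentiate: the observation that $\alpha_i\geq 0$ forces $d\alpha_i(P)=0$ wherever $\alpha_i(P)=0$ is exactly what is needed to conclude that both $g(P)$ and the image of $dg(P)$ lie in $V_P$, whence $\mathrm{rank}\,dG(P)\leq\dim V_P-1\leq m-2$ on all of $\partial\Omega$, and Sard's theorem makes $G(\partial\Omega)$ a null set of $\Sph^{m-1}$ — again contradicting surjectivity. Your route trades the paper's combinatorial/topological bookkeeping (strata of the cover, Baire category) for a pointwise rank computation plus a measure-theoretic black box; it is arguably cleaner at the boundary strata where the active index set changes, which is precisely where the paper has to invoke nowhere-density and continuity to pass from $\partial\Omega\setminus\bigcup_i\partial E_i$ to all of $\partial\Omega$, whereas your rank bound holds at every point of $\partial\Omega$ with no exceptional set. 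The paper's argument, on the other hand, yields the slightly stronger conclusion that $G(\partial\Omega)$ is nowhere dense rather than merely null, and would survive with only continuous (rather than smooth) partitions of unity. Both are correct proofs of the theorem.
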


\begin{proof}
We consider the canonical field of good directions obtained  in the proof of Proposition~\ref{prop:G}, namely
\be\label{def:Ccanonical}
G(P)\stackrel{\rm def}{=}\frac{\Sigma_{i=1}^k \alpha_i(P)n_{P_i}}{|\Sigma_{i=1}^k \alpha_i(P)n_{P_i}|}
\ee
 with $n_{P_i}$ a good direction at $P_i$ with respect to the ball $B(P_i,\delta_i)$ and thus at each point $R\in B(P_i,\frac{1}{2}\delta_{P_i}),\,i=1,\dots,k,$ and $\alpha_i, i=1,\dots,k,$ a partition of unity subordinate to the covering $B(P_i,\frac{1}{2}\delta_{P_i}),\,\,i=1,\dots,k$.  Then $G|_{\partial\Omega}$ is a continuous pseudonormal  field and hence Theorem~\ref{prop:surjectivity}, part (i), provides that $G:\partial\Omega\to\Sph^{m-1}$ is surjective. We claim now that there exist $m$ linearly independent good directions $\bar n_1,\dots,\bar n_m\in\{n_{P_i},\,i=1,\dots,k\}$ such that $\partial\Omega$ and the $m$ corresponding balls, on which they are good directions, have a non-empty intersection.  Lemma~\ref{lemma:lipreg} then shows that $\partial\Omega$ is Lipschitz in a neighbourhood of any point $P$ in their intersection.

  In order to prove the claim we assume, for contradiction, that  at each point $Q\in\partial\Omega$ the subset of  $\{n_{P_i}\in\Sph^{m-1},i=1,\dots,k\}$ that are good directions at $Q$ is contained in a hyperplane. For each $i=1,\dots,k$, let $E_i=\{P\in\partial\Omega:\alpha_i(P)>0\}$. Then each $\partial E_i$ is a closed nowhere dense subset of $\partial\Omega$, and so by the Baire Category theorem $\cup_{i=1}^k\partial E_i$ is a closed nowhere dense subset of $\partial\Omega$. Consider any nonempty subset of the form $A_{i_1,\ldots,i_r}=\cap_{j=1}^rE_{i_j}$, where $1\leq r\leq k$ and  $1\leq i_1<\cdots <i_r\leq k$. For $P\in A_{i_1,\ldots,i_r}$ we have by our assumption that the vectors $n_{P_{i_j}},\;1\leq j\leq r,$ lie in a hyperplane, and thus from \eqref{def:Ccanonical} we have that $G(A_{i_1,\ldots,i_r})$ is contained in this hyperplane. Hence $G(\partial\Omega\setminus\cup_{i=1}^k\partial E_i)$ is contained in a closed nowhere dense subset $A$ of $\Sph^{m-1}$ (the union of the intersection with  $\Sph^{m-1}$ of a finite number of hyperplanes).  Since $G$ is continuous and   $  \cup_{i=1}^k\partial E_i$  is nowhere dense, it follows that $G(\partial\Omega)\subset A$, contradicting the surjectivity of $G$.
 \end{proof}

\medskip\par We continue by studying  the set of all pseudonormals in the general case of $C^0$ domains with no topological restrictions imposed on $\partial\Omega$.

\begin{lemma}\label{lemma:nonsingleton} Let $\Omega$ be a bounded domain of class $C^0$ in $\R^m$, $m>1$. For each connected component $\mathcal{C}$ of $\partial\Omega$ there exists a point $P\in\mathcal{C}$ at which the set of good directions at $P$ is not a singleton.
\end{lemma}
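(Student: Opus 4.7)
The plan is to argue by contradiction. Suppose that for every $P\in\mathcal{C}$ the set $\mcG(P)$ is a singleton, say $\mcG(P)=\{n(P)\}$. I will show this forces $n$ to be constant on $\mathcal{C}$, which contradicts Proposition~\ref{Gprops}(i) since $\mathrm{Span}\,\mcG(\mathcal{C})=\R^m$ and $m\geq 2$.

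The main tool I would use is the canonical field of good directions $G$ from Proposition~\ref{smapp}, which has the form
$$G(P)=\frac{\sum_{i=1}^k\alpha_i(P)n_{P_i}}{|\sum_{i=1}^k\alpha_i(P)n_{P_i}|},$$
where $\{\alpha_i\}_{i=1}^k$ is a partition of unity subordinate to balls $B(P_i,\tfrac{1}{2}\delta(P_i))$ on each of which $n_{P_i}$ serves as a good direction. Since $G(P)$ is by construction a good direction at every point in a neighbourhood of $\partial\Omega$, the singleton hypothesis forces $G(P)=n(P)$ for all $P\in\mathcal{C}$.

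The key step is showing that $n$ is locally constant on $\mathcal{C}$. Fix $P\in\mathcal{C}$ and choose any index $i$ with $\alpha_i(P)>0$ (such an $i$ exists because $\sum_i\alpha_i\equiv 1$ on a neighbourhood of $\partial\Omega$). Then $n_{P_i}$ is a good direction at $P$, and the singleton hypothesis forces $n_{P_i}=n(P)$. Now set $V=\{x\in\R^m:\alpha_i(x)>0\}$, an open neighbourhood of $P$ contained in $B(P_i,\tfrac{1}{2}\delta(P_i))$; the argument in the proof of Proposition~\ref{smapp} then guarantees that $n_{P_i}$ is a good direction at every $Q\in V$. Hence for $Q\in V\cap\mathcal{C}$ the singleton hypothesis gives $n(Q)=n_{P_i}=n(P)$, so $n$ is constant on the open neighbourhood $V\cap\mathcal{C}$ of $P$ in $\mathcal{C}$. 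Connectedness of $\mathcal{C}$ then implies $n$ is constant on $\mathcal{C}$, so $\mcG(\mathcal{C})$ is a single point, contradicting $\mathrm{Span}\,\mcG(\mathcal{C})=\R^m$.

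I do not foresee a substantive obstacle. The only point requiring care is the observation $V\subset B(P_i,\tfrac{1}{2}\delta(P_i))$ which ensures $n_{P_i}$ remains a good direction throughout $V$; this is immediate from the support conditions on the $\alpha_i$ used in Proposition~\ref{smapp}.
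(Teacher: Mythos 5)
Your proposal is correct and follows essentially the same route as the paper: assume every boundary point of $\mathcal{C}$ has a unique good direction, show the resulting direction map is locally constant because a good direction persists at nearby points, conclude by connectedness that it is constant, and contradict Proposition~\ref{Gprops}(i). The only cosmetic difference is that you obtain the local persistence via the covering balls and partition of unity from Proposition~\ref{smapp}, whereas the paper invokes it directly from the definition of a good direction; the detour through the canonical field $G$ is harmless but not needed.
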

\begin{proof} Assume for contradiction that for any $P\in\mathcal{C}$ there exists only one good direction $n(P)\in\Sph^{m-1}$. Note that if $N$ is a good direction at some point $x\in\partial\Omega$ then $N$ is also a good direction at points $z\in\partial\Omega$ sufficiently close to $x$. Pick $P_1\in\mathcal{C}$ and define $E=\{P\in\mathcal{C}: n(P)=n(P_1)\}$. The preceding property implies that $E$ is both open and closed in $\mathcal{C}$. Since $\mathcal{C}$ is connected it follows that $E=\mathcal{C}$, in contradiction to Proposition~\ref{Gprops}(i). \end{proof}

\smallskip The   last lemma, combined with the characterization of Lipschitz regularity of the boundary in Lemma~\ref{lemma:lipreg} immediately imply:

\begin{theorem}\label{partial2Da} Let $\Omega$ be a bounded domain of class $C^0$ in $\R^2$. For each connected component $\mathcal{C}$ of $\partial\Omega$ there exists a point $P\in\mathcal{C}$ in the neighbourhood of which $\partial\Omega$ is Lipschitz.
\end{theorem}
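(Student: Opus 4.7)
The plan is to combine the two results flagged in the sentence preceding the theorem: Lemma \ref{lemma:nonsingleton} produces a point $P$ on the connected component $\mathcal{C}$ of $\partial\Omega$ at which the set of good directions is not a singleton, and Lemma \ref{lemma:lipreg} converts an abundance of good directions at a point into Lipschitz regularity of $\partial\Omega$ nearby. The whole argument is a two-line assembly once the dimension $m=2$ is used to see that ``not a singleton'' already means ``two linearly independent good directions''.

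More precisely, I would first apply Lemma \ref{lemma:nonsingleton} to $\mathcal{C}$ to obtain a point $P\in\mathcal{C}$ and two distinct good directions $p,q\in\Sph^{1}$ at $P$. By Lemma \ref{geoconv} (applied with any common ball $B(P,\delta)$ that witnesses both $p$ and $q$ as good directions, obtained by shrinking the two individual radii, as explained in the remark following Definition \ref{gooddir}) we have $p\neq -q$, so $p$ and $q$ are not antipodal; since they are distinct unit vectors in $\R^{2}$, they are linearly independent.

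At this stage the hypothesis of Lemma \ref{lemma:lipreg} in dimension $m=2$ is satisfied at $P$: the set of good directions at $P$ contains the two linearly independent vectors $p,q$. That lemma then yields $\delta>0$, an orthonormal coordinate system $(y_1,y_2)$ with origin at $P$, and a Lipschitz function $f:\R\to\R$ such that
\begin{equation*}
\Omega\cap B(P,\delta)=\{(y_1,y_2)\in\R^{2}\,:\, y_2>f(y_1),\ |y|<\delta\},
\end{equation*}
which is exactly the statement that $\partial\Omega$ is Lipschitz in a neighbourhood of $P$.

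There is no real obstacle here; the only mild subtlety is the choice of a common radius $\delta>0$ for which both $p$ and $q$ witness the local graph property at $P$, but this is standard and was already addressed in the remark after Definition \ref{gooddir}. All the work has been done in Lemmas \ref{geoconv}, \ref{lemma:lipreg} and \ref{lemma:nonsingleton}, and the role of the hypothesis $m=2$ is simply that two distinct non-antipodal directions in $\Sph^{1}$ automatically span $\R^{2}$, so the ``$m$ linearly independent'' requirement of Lemma \ref{lemma:lipreg} reduces to the ``not a singleton'' conclusion of Lemma \ref{lemma:nonsingleton}.
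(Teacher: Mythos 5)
Your argument is exactly the paper's: the theorem is stated there as an immediate consequence of Lemma \ref{lemma:nonsingleton} together with Lemma \ref{lemma:lipreg}, with the same observation that in dimension $m=2$ two distinct, non-antipodal (by Lemma \ref{geoconv}) good directions are automatically linearly independent. The proposal is correct and complete.
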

\begin{remark}\label{unbounded}{\rm
Note that Theorem \ref{partial2Da} is not in general true for {\it unbounded} domains in $\R^2$ of class $C^0$. Indeed, by Lemma \ref{2Dex}, if $f:\R\rightarrow\R$ is nowhere differentiable then the domain $\Omega=\{(x_1,x_2):x_2>f(x_1)\}$ has no Lipschitz boundary portions.}
\end{remark}

We also have a similar result in 3D, but the proof is considerably more intricate.

\begin{theorem}\label{partial3D}Let $\Omega$ be a bounded domain of class $C^0$ in $\R^3$. For each connected component of $\partial\Omega$ there exists a point $P\in\partial\Omega$ in the neighbourhood of which $\partial\Omega$ is Lipschitz.
\end{theorem}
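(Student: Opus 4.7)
The plan is to argue by contradiction. Fix a connected component $\mcC$ of $\partial\Omega$ and assume that $\partial\Omega$ is not Lipschitz at any point of $\mcC$. By Lemma~\ref{lemma:lipreg} this is equivalent to saying that for every $P\in\mcC$ the set $\mcG(P)$ of good directions fails to contain three linearly independent vectors, i.e.\ $\dim\mathrm{Span}\,\mcG(P)\le 2$ throughout $\mcC$. The first step is to adapt the canonical-field argument used in the proof of Theorem~\ref{thm:zeroEulerchar}: build $G=\sum_i\alpha_i n_{P_i}/|\sum_i\alpha_i n_{P_i}|$ from local good directions with supports $E_i=\{\alpha_i>0\}$, and observe that the standing assumption forces every triple intersection $E_{i_1}\cap E_{i_2}\cap E_{i_3}$ whose associated vectors $\{n_{P_{i_j}}\}$ are linearly independent to be empty (else those three vectors would be common good directions at a point of the intersection). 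The Baire-category reasoning from the proof of Theorem~\ref{thm:zeroEulerchar} then shows that $G(\mcC)$ is contained in a closed nowhere-dense subset $\mathcal{A}\subset\Sph^2$ consisting of a finite union of great circles.

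If $\chi(\mcC)\neq 0$, Remark~\ref{rmk:surj} (applicable since $m=3$ is odd) asserts that any continuous pseudonormal field on $\mcC$ must be surjective onto $\Sph^2$, and this directly contradicts $G(\mcC)\subset\mathcal{A}$. The substantive case, and the main obstacle, is therefore $\chi(\mcC)=0$: since by Remark~\ref{topman1} each component $\mcC$ is a closed orientable topological $2$-manifold embedded in $\R^3$, this forces $\mcC$ to be a topological torus, and Proposition~\ref{prop:goodtorus} (the Lackenby example) shows that continuous pseudonormal fields on a torus need not be surjective, so the inclusion $G(\mcC)\subset\mathcal{A}$ is no longer sufficient by itself to yield a contradiction.

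To handle the torus case I would reduce to the two-dimensional result of Theorem~\ref{partial2Da} via planar cross-sections. For an affine plane $\Pi\subset\R^3$ meeting $\mcC$ suitably transversally, the aim is to show that $\Omega\cap\Pi$ is a bounded $C^0$ domain in $\Pi\cong\R^2$ whose boundary contains $\mcC\cap\Pi$; Theorem~\ref{partial2Da} then produces a point $Q_\Pi\in\mcC\cap\Pi$ admitting two linearly independent good directions lying inside $\Pi$, which are automatically good directions in $\R^3$ for $\Omega$. By letting $\Pi$ range over a three-parameter family of planes and using compactness of $\mcC$ together with the lower semicontinuity of $\mcG$, the plan is to accumulate these two-dimensional good directions at a common limit point $P_\infty\in\mcC$ in such a way that the collection of resulting good directions at $P_\infty$ spans $\R^3$, contradicting the standing assumption.

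The principal obstacle is the genericity step: a planar section of a bounded $C^0$ domain in $\R^3$ is not automatically a $C^0$ domain in the section plane — it may acquire cusps, isolated boundary points, or fail to be locally a graph — so one needs a transversality argument, most naturally based on the canonical field $G$ and a Sard-type selection of planes whose normals avoid the nowhere-dense set $\mathcal{A}$ produced in Step~1. Should the slicing strategy encounter irreducible technical difficulties, a backup route is to exploit the topological classification of $C^0$ tori in $\R^3$ (Lemma~\ref{comps} together with the Alexander-type statement that each bounded complementary component of such a torus is a solid torus) and to adapt the supporting-hyperplane argument from the proof of Proposition~\ref{Gprops}(i) by sliding half-spaces from three linearly independent directions so as to force a common extremal point of $\mcC$ carrying three independent good directions.
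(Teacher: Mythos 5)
Your overall reduction (nonzero Euler characteristic handled by surjectivity plus the Baire/great-circle argument; the zero Euler characteristic, i.e.\ torus, component as the genuinely hard case) matches the paper's strategy, but the torus case --- which is the whole content of the theorem beyond Theorem~\ref{thm:zeroEulerchar} --- is not actually proved in your proposal, and the slicing plan has a fatal step beyond the one you acknowledge. First, as you note, a planar section of a bounded $C^0$ domain in $\R^3$ need not be a $C^0$ domain in the section plane, and it is not clear that any Sard-type choice of plane repairs this, since the obstruction is the local graph structure of $\partial\Omega$, not a measure-zero phenomenon that transversality can avoid. Second, and more seriously, the assertion that the two in-plane good directions produced by Theorem~\ref{partial2Da} for $\Omega\cap\Pi$ ``are automatically good directions in $\R^3$ for $\Omega$'' is false: a direction $n\in\Pi$ can make $\partial(\Omega\cap\Pi)$ a graph inside $\Pi$ near $Q_\Pi$ while lines parallel to $n$ in nearby parallel planes cross $\partial\Omega$ several times, so the representation \eqref{u+} fails in every three-dimensional ball; goodness for the slice gives no control transverse to $\Pi$. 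Third, the accumulation step needs uniformity: a direction that is good at points $Q_j\to P_\infty$ with radii shrinking to zero need not be good at $P_\infty$ (lower semicontinuity of $\mcG$ goes in the opposite direction), so the candidate directions must be anchored in a fixed finite family with fixed radii --- exactly what the canonical cover provides and what your torus step abandons. The backup route (Alexander-type classification plus sliding half-spaces from three independent directions) produces in general three distinct extremal points, not one point carrying three independent good directions, so it does not close the gap either.

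For comparison, the paper resolves the torus case without slicing: it works on a connected component $\mcS_\eps$ of the smooth approximation $\partial\Omega_\eps$ from Theorem~\ref{homapprox}, uses the classification of compact surfaces to get a diffeomorphism onto the standard torus $\To$, transports the canonical field to $\To$ via the orthonormal frames of Lemma~\ref{lemma:paralleltransport} so that the transported field is still a pseudonormal field, and then invokes the degree argument of Proposition~\ref{prop:goodtorus}, which forces the image to contain the equatorial band $E_\delta$ and hence to have nonempty interior. Nonempty interior (rather than surjectivity) then feeds into the Baire-category argument of Theorem~\ref{thm:zeroEulerchar} to produce, for each small $\eps>0$, a point $Q_\eps\in\partial\Omega_\eps$ lying in three of the finitely many canonical balls whose associated directions are linearly independent; finiteness of the cover allows a choice of $\eps_j\to 0$ with the same three balls, a limit point $\bar Q\in\partial\Omega$ at which the three directions are still good, and Lemma~\ref{lemma:lipreg} concludes. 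The missing ingredient in your outline is precisely this ``image has nonempty interior'' statement for pseudonormal fields on a torus, which your planar sections cannot deliver.
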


\begin{proof} We consider again the canonical field of good directions obtained  in the proof of Proposition~\ref{prop:G}, namely
\be\label{def:Ccanonical1}
G(P)\stackrel{\rm def}{=}\frac{\sum_{i=1}^k \alpha_i(P)n_{P_i}}{|\sum_{i=1}^k \alpha_i(P)n_{P_i}|}
\ee
 with $n_{P_i}$ a good direction at $P_i$ with respect to the ball $B(P_i,\delta_i)$ and thus at each point $R\in B(P_i,\frac{1}{2}\delta_{P_i}),\,i=1,\dots,k,$ and $\alpha_i, i=1,\dots,k,$ a partition of unity subordinate to the covering $B(P_i,\frac{1}{2}\delta_{P_i}),\,\,i=1,\dots,k$. 

We continue by analyzing the image of $G$ when restricted to an arbitrary connected component  ${\mathcal S}={\mathcal S}_\varepsilon$ of  $\partial\Omega_\varepsilon$  for the approximating $\Omega_\varepsilon$ as in Theorem~\ref{homapprox}, with $\varepsilon>0$ sufficiently small. We will show that $G(\mcS)$ has non-empty interior and that this allows one to infer that there exist three linearly independent directions at some point $Q_\eps\in \partial\Omega_\eps$ for all $\eps>0$. Then, thanks to the special structure of the canonical field $G$, it will be shown that  the same can be claimed at some point $Q\in\partial\Omega$.

\smallskip
We start by noting that if the connected component $\mcS$ of  $\partial\Omega_\varepsilon$ has non-zero Euler characteristic then Theorem~\ref{prop:surjectivity} and Remark~\ref{rmk:surj}   ensure that $G(\mcS)=\Sph^2$ hence $G(\mcS)$ has non-empty interior.

The case when $\mcS$ has zero Euler characteristic is more delicate as in this situation the field $G|_{\mcS}$ is not necessarily surjective, as was shown in Proposition~\ref{prop:goodtorus}. The proof continues in two steps.    In the first we show  that we can assume, without loss of generality, that $\mcS=\To$ is the standard torus $\To=\Sph^1\times\Sph^1$ embedded in $\R^3$, for which we know by Proposition~\ref{prop:goodtorus} that
any  smooth field of good directions $G:\mcS\to\Sph^2$ has an image $G(\mcS)$ with non-empty interior. Then, in the second step, we show that $G(\mcS)$ having non-empty interior implies (irrespective of the Euler characteristic of $\mcS$ being zero or not)  that there exist three linearly independent directions at some point $Q_\eps\in \partial\Omega_\eps$ for all $\eps>0$, and that the same holds also at some point $Q\in\partial\Omega$.

{\bf Step 1 (reduction to the standard torus and consequences):}   We note that $\mcS$ is a smooth $2$-dimensional compact, connected and orientable manifold without boundary, that   has zero Euler characteristic. Let 
\be\label{torus:explicit}
\To\defeq \{(\cos\theta\,(2+\cos\varphi),\sin\theta\,(2+\cos\varphi), \sin\varphi): \theta,\varphi\in [0,2\pi]\}
\ee denote a standard torus embedded in $\R^3$. Then $\To$ is a $2$-dimensional compact, connected and orientable manifold without boundary of zero Euler characteristic and by the theorem of classification of $2$-dimensional compact manifolds \cite[Chapter 9]{hirsch} there exists a diffeomorphism  $D:\mcS\to\To$.

 We  use the diffeomorphism $D$ to transport the field of good directions, in a bijective manner, from $\mcS$ onto $\To$, while preserving the angle between the good direction and the normal. 
 \begin{lemma}\label{lemma:paralleltransport}
 Let $\nu_S:\mcS\to \Sph^2$, $\nu_{\To}:\To\to\Sph^2$ denote the interior normals to $\mcS$, respectively $\To$. There exist smooth functions $e,\hat e:\mcS\to\Sph^2$, $f,\hat f:\To\to\Sph^2$  such that  $e,\hat e$ and $\nu_{\mcS}$, respectively $f,\hat f$ and $\nu_{\To}$ are pairwise orthogonal at each point.
 \end{lemma} 
\begin{proof}
The proof is essentially an easy consequence of the fact that $\mcS$ and $\To$ are parallelizable manifolds embedded in $\R^3$. We consider the torus $\To$ as in \eqref{torus:explicit}. Then $\nu_{\To}(\theta,\varphi)=-(\cos\theta\cos\varphi,\sin\theta\cos\varphi, \sin\varphi)$ for any $\theta,\varphi\in [0,2\pi]\times [0,2\pi]$.  We take 
 
 $$f(\theta,\varphi)\defeq (-\sin\theta,\cos\theta,0),\quad \hat f(\theta,\varphi)\defeq (-\cos\theta\sin\varphi,-\sin\theta\sin\varphi,\cos\varphi).$$
 
 We consider the derivative $TD:T\mcS\to\To$ of the diffeomorphism $D:\mcS\to\To$, acting between the tangent bundles $T\mcS$ and $T\To$. Then for any point $P\in\mcS$ we have that the linear map $T_PD: T_P\mcS\to T_{D(P)}\To$ is an invertible linear function, and as such $T_PD^{-1}\big(f(D(P))\big)$ and $T_PD^{-1}\big(\hat f(D(P))\big)$ define a basis in  $T_P\mcS$ which varies smoothly in $P$. However this basis need not be an orthogonal basis and in order to obtain an orthogonal, smoothly varying, basis, we take:
 $$e(P)\defeq \frac{T_PD^{-1}\big(f(D(P))\big)}{|T_PD^{-1}\big(f(D(P))\big)|},\quad \hat e(P)=e(P)\times \nu_\mcS (P).$$ 
\end{proof} 
  Continuing the proof of Theorem \ref{partial3D}, let $G:\mcS\to\Sph^2$ be a smooth field of good directions on $\mcS$ such that $G(P)\cdot \nu_\mcS (P)>0$ for all $P\in\mcS$. We define
$$
\tilde G(D(P))\stackrel{\rm def}{=} \big(G(P)\cdot\nu_{\mcS}(P)\big)\nu_{\To}(D(P))+\big(G(P)\cdot e(P)\big)f(D(P))+\big( G(P)\cdot \hat e(P)\big)\hat f(D(P))
$$
 We have then that $\tilde G(Q)\cdot \nu_{\To}(Q)=G(D^{-1}(Q))\cdot \nu_{\mcS}(D^{-1}(Q))>0$ for all $Q\in\To$ and thus $\tilde G$ is a smooth field of good directions on $\To$, the transported version to $\To$ of the field $G$.
 
 We claim now that $G(\mcS)$ has nonempty interior if and only if the transported version $\tilde G(\To)$ has nonempty interior.
 
To this end let us define the continuous function $\mathcal{H}:\mcS\times \Sph^2\to\To\times\Sph^2$ by
$$\mathcal{H}(P,n)\stackrel{\rm def}{=}\Big(D,\big(n\cdot\nu_{\mcS}\big)\nu_{\To}(D)+\big(n\cdot e\big)f(D)+\big( n\cdot \hat e\big)\hat f(D)\Big)(P)$$
One can check that $\widetilde{\mathcal{H}}:\To\times \Sph^2\to\mcS\times\Sph^2$ defined by
$$\widetilde{\mathcal{H}}(R,m)\stackrel{\rm def}{=}\Big(D^{-1},\big(m\cdot \nu_{\To}\big)\nu_{\To}(D^{-1})+\big(m\cdot f\big)e(D^{-1})+\big(m\cdot \hat f)\hat e(D^{-1})\Big)(R)
 $$ is the continuous inverse of $\mathcal{H}$ and thus $\mathcal{H}$ is a homeomorphism. Moreover we have:
 \be\label{rel:transportfield}
 \mathcal{H}(P,G(P))=(D(P),\tilde G(D(P))).
 \ee
 Let us assume now that $G(\mcS)$ has nonempty interior $E$. Then,  since $G$ is continuous, $G^{-1}(E)$ is nonempty and open. Since $\mathcal{H}$ is a homeomorphism, it takes nonempty open sets into nonempty open sets, so that, by \eqref{rel:transportfield},  $\mathcal{H}(G^{-1}(E),E){=}(D(G^{-1}(E),\tilde G(D(G^{-1}(E))))$. Thus $\tilde G(D(G^{-1})(E))\subset\tilde G(\To)$ is a nonempty open set and therefore it has nonzero measure.  One can argue in a similar way, using $\tilde H$ to show that if $\tilde G(\To)$ has nonempty interior then so does $G(\mcS)$, thus proving our claim.

Proposition~\ref{prop:goodtorus}  now shows that $\tilde G(\To)$ has  nonempty interior and therefore  so does $G(\mcS)$.

{\bf Step $2$ (from the smooth approximating domains back to the rough one):} Let $\Omega_\eps$ be a sequence of smooth domains approximating $\Omega$, as given in Theorem~\ref{homapprox}. Let $G:V\to\Sph^2$ be a canonical field of good directions, where $V$ is an open set containing $\partial\Omega$ (hence there exists an $\eps_0>0$ so that $\partial\Omega_\eps\subset V$ for $0<\eps<\eps_0$). The previous step and the remark before it show that for each connected component $\mcS_\varepsilon$ of $\partial\Omega_\varepsilon$ we have that the interior of $G(\mcS_\varepsilon)$ is nonempty.

 Let us recall now the definition \eqref{def:Ccanonical1} of the canonical field $G$. Arguing as in the proof of Proposition~\ref{thm:zeroEulerchar} (and using  the fact that $G(\mcS_\varepsilon)$ has nonempty interior, instead  of surjectivity of $G$) we have that there exists a point $Q_\eps\in\mcS_\varepsilon$ so that there  exist three linearly independent good directions $n_{P_{i_1(\varepsilon)}},n_{P_{i_2(\varepsilon)}},n_{P_{i_3(\varepsilon)}}$      such that the three corresponding balls $B(P_{i_k(\varepsilon)},\frac{1}{2}\delta_{i_k(\varepsilon)}), k=1,2,3,$ have a nonempty intersection containing $Q_\eps$.  

 Since the number of balls in the cover is finite there exist three of them, say $B(\bar P_i,\frac{\bar\delta_i}{2}),i=1,2,3$ such that there exist infinitely many points $Q_{\eps_j}$, $\varepsilon_j\rightarrow 0+$,  in their intersection. There exists then a $\bar Q\in\partial\Omega$, a limit point of the $Q_{\eps_j}$, with $\bar Q\in \cap_{i=1}^3 \overline{B(\bar P_i,\frac{\delta_i}{2})}\subset \cap_{i=1}^3B(\bar P_i,\delta_i)$ and such that there are three linearly independent good directions at $\bar Q$.   \end{proof}

\begin{remark}{\rm Despite some efforts we have been unable to decide whether Theorem \ref{thm:zeroEulerchar} remains true for $m\geq 4$  when the Euler characteristic of $\Omega$ is zero.}
\end{remark}
\begin{remark}\label{gendomain}{\rm
Theorems \ref{thm:zeroEulerchar}-\ref{partial3D} remain valid if the hypothesis that $\Omega$ is of class $C^0$ is replaced by the weaker hypothesis that $\Omega$ is the  image under a $C^1$ diffeomorphism $\varphi:U\rightarrow\R^m$ of a bounded domain $\Omega'\subset\R^m$ of class $C^0$, where $U$ is an open neighbourhood of $\overline{\Omega'}$. Indeed, by \cite[Theorem 4.1]{hofmann} (in which the term strongly Lipschitz is used instead of our Lipschitz) if $\partial\Omega'$ is Lipschitz in a neighbourhood of $P$ then $\partial\Omega$ is Lipschitz in a neighbourhood of $\varphi(P)$.}
\end{remark}

\begin{remark}\label{partreg:topman}{\rm
The partial regularity results of this section  crucially use a rigidity specific to the definition of $C^0$ domains. Indeed, just by enlarging a bit the class of domains studied, to topological manifolds with boundaries, one can immediately provide  counterexamples to partial regularity. This  can be seen for instance by considering the domain in polar coordinates in $\R^2$: $\Omega\defeq \{ (r,\theta); 0\le r<f(\theta)\}$ with $f:\mathbb{S}^1\to\R$ a nowhere differentiable function, which is a topological manifold with boundary but is not a domain of class $C^0$.
}
\end{remark}

\section*{Acknowledgements} The research of both authors was partly supported by   EPSRC grants EP/E010288/1 and by the EPSRC Science and Innovation award to the Oxford Centre for Nonlinear
PDE (EP/E035027/1). The research of JMB was also supported by the European Research Council under the European Union's Seventh Framework Programme
(FP7/2007-2013) / ERC grant agreement no 291053 and
 by a Royal Society Wolfson Research Merit Award.  The research of AZ  was partially supported by the Basque Government through the BERC 2014-2017 program; and by the Spanish Ministry of Economy and Competitiveness MINECO: BCAM Severo Ochoa accreditation SEV-2013-0323.

We are especially grateful to Marc Lackenby for  the suggestion of an example forming the basis of the second part of Proposition~\ref{prop:goodtorus} as well  as for other advice,   to Vladimir \v{S}ver\'{a}k, whose perceptive questions after a seminar of JMB at the University of Minnesota on an earlier version of the paper led to radical improvements, and to Rob Kirby for long discussions concerning smoothing of manifolds that are incorporated in Remark \ref{topman1}. We are also grateful to Moe Hirsch for his interest and various references, and to Nigel Hitchin for useful comments. 

  Finally we are grateful to  anonymous referees for pointing out to us the connection with the theory of smoothing of manifolds, for a simplification of our original example in the second part of Proposition~\ref{prop:goodtorus}, and for  mentioning various relevant references.

\end{document}